\newcommand{\Q}{\mathbb{Q}}
\newcommand{\Qp}{\Q_p}
\newcommand{\C}{\mathbb{C}}
\newcommand{\R}{\mathbb{R}}
\newcommand{\Z}{\mathbb{Z}}
\newcommand{\M}{\mathcal{M}}
\newcommand{\Zp}{\Z_p}
\newcommand{\F}{\mathbb{F}}
\newcommand{\Fp}{\F_p}
\newcommand{\Fq}{\F_q}
\renewcommand{\O}{\mathfrak{o}}
\newcommand{\Der}{\operatorname{Der}}
\newcommand{\Fra}{\operatorname{Frac}}
\newcommand{\Reg}{\operatorname{Reg}}
\newcommand{\hgt}{\operatorname{ht}}
\newcommand{\rank}{\operatorname{rank}}
\newcommand{\Spec}{\operatorname{Spec}}
\newcommand{\Spa}{\operatorname{Spa}}
\newcommand{\ra}{\rightarrow}
\newcommand{\Ar}{{A^r_{L,E}}}
\newcommand{\WLE}{{W(\O_L)_E}}
\newcommand{\Bi}{B^I_{L,E}}
\newcommand{\Bic}{B^{I,\circ}_{L,E}}
\newcommand{\Bip}{B^{I,+}_{L,E}}
\newcommand{\Bin}{\Bi\{T_1,\dots,T_n\} }
\newcommand{\imax}{\mathfrak{m}}
\newcommand{\ip}{\mathfrak{p}}
\newcommand{\iq}{\mathfrak{q}}
\newcommand{\ATn}{A\{T_1/\rho_1,\dots,T_n/\rho_n\}}
\newcommand{\ATrn}{\Ar\{T_1/\rho_1,\dots,T_n/\rho_n\}}
\newcommand{\Rc}{R^\circ}
\newtheorem{theorem}{Theorem}
\theoremstyle{definition}
\numberwithin{theorem}{section}
\newtheorem{lemma}[theorem]{Lemma}
\newtheorem{definition}[theorem]{Definition}
\newtheorem{prop}[theorem]{Proposition}
\newtheorem{remark}[theorem]{Remark}
\newtheorem{corollary}[theorem]{Corollary}
\newtheorem{hypothesis}[theorem]{Hypothesis}
\title{Properties of Extended Robba Rings}
\author{Peter Wear}
\begin{document}
\setlength{\unitlength}{1in}
\maketitle

\begin{abstract}
We extend the analogy between the extended Robba rings of $p$-adic Hodge theory and the one-dimensional affinoid algebras of rigid analytic geometry, proving some fundamental properties that are well known in the latter case. In particular, we show that these rings are regular and excellent. The extended Robba rings are of interest as they are used to build the Fargues-Fontaine curve.
\end{abstract}

\section{Introduction}

Since being introduced in \cite{FF}, the Fargues-Fontaine curve has quickly become an important object in number theory. Given a finite extension $K$ of $\Qp$ with Galois group $G_K$, an important result from $p$-adic Hodge theory gives an equivalence of categories between continuous representations of $G_K$ on finite free $\Zp$-modules and \'etale $(\phi,\Gamma)$-modules over the period ring $\mathbf{A}_K$ (see \cite{BC} for an exposition of this result). In \cite{FF}, Fargues and Fontaine describe this category in terms of vector bundles on the scheme-theoretic Fargues-Fontaine curve.

There is also an adic version of the Fargues-Fontaine curve, this is an analytification satisfying a version of the GAGA principle as seen in \cite[\S 4.7]{KL2}. Both versions of the curve parametrize the untilts of characteristic $p$ perfectoid fields. Recently, Fargues has formulated a conjecture using the curve to link $p$-adic Hodge theory, the geometric Langlands program and the local Langlands correspondence \cite{Far}.

The adic version of the curve is built out of extended Robba rings. These rings appear in $p$-adic Hodge theory (\cite{KL}, for example). In \cite{NP}, Kedlaya proved that they are strongly noetherian. Kiehl's theory of coherent sheaves on rigid analytic spaces has been extended to a similar theory on adic spaces by Kedlaya and Liu in \cite{KL2} and on rigid geometry by Fujiwara and Kato in \cite{FK}. The strong noetherian property is required to fit the curve into this theory.

This work suggests an analogy between the extended Robba rings and one-dimensional affinoid algebras. In \cite{NP}, Kedlaya established some finer properties of the rings suggested by this analogy and listed some other expected properties \cite[Remark 8.10]{NP}. In this paper, we establish these properties. We hope that the extension of this analogy will help transfer results from the theory of rigid analytic spaces to the Fargues-Fontaine curve. 

We now give an outline of this paper. The extended Robba rings are completions of rings of generalized Witt vectors. In \cite{NG}, Kedlaya gave a classification of the points of the Berkovich space associated to $W(R)$ - the ring of Witt vectors over any perfect $\Fp$-algebra $R$. His proof demonstrates a close analogy between $W(R)$ and the polynomial ring $R[T]$ equipped with the Gauss norm. We first extend this classification to the generalized Witt vectors by exploiting the many shared functorial properties of the two constructions. We then consider higher rank valuations to get a description of the corresponding adic space, again taking advantage of the analogy to $R[T]$. Completing, we get the classification for the extended Robba rings.

Using this explicit classification, we can prove that the rank-$1$ valuations of these rings are dense in the constructible topology of the adic spectrum. This allows us to compute the power bounded elements of a rational localization of these rings. We then extend these results to the rings defined by \'etale morphisms of extended Robba rings. 

Finally, we prove a form of the Nullstellensatz and use this to prove regularity and excellence for these rings. In characteristic zero, our proof of excellence is an adaption of the proof in Matsumura's book \cite[Theorem 101]{Mat1} that the ring of convergent power series over $\R$ or $\C$ is excellent. In particular, we work with the derivations of the rings, proving a Jacobian criterion. In the characteristic $p$ case, excellence follows from a theorem of Kunz \cite[Theorem 2.5]{Kun}.

\subsection{Acknowledgements}

The author would like to thank Kiran Kedlaya for suggesting these questions and for many helpful conversations. The author gratefully acknowledges the support of NSF grant DMS-1502651 and UCSD.

\section{Generalized Witt Vectors}\label{GWV}

Throughout this paper, we will be working in the same setup as \cite{NP}. Fix a prime $p$ and a power $q$ of $p$. Let $L$ be a perfect field containing $\Fq$, complete for a multiplicative nonarchimedean norm $|\bullet|$, and let $E$ be a complete discretely valued field with residue field containing $\Fq$ and uniformizer $\varpi\in E$. Let $\O_L$ and $\O_E$ be the corresponding valuation subrings and write $W(\O_L)_E:=W(\O_L)\otimes_{W(\Fq)} \O_E$. Concretely, each element of $\WLE$ can be uniquely written in the form $\sum_{i\geq 0} \varpi^i[\overline{x}_i]$ with $\overline{x}_i\in\O_L$. 

This ring is treated at length in \cite[Sections 5-6]{FF} with the notation $W_{\O_E}(\O_F)$. Alternately, $W(\O_L)_E$ is a ring of generalized Witt vectors as described in \cite[Section 2]{CD} with the notation $W_\varpi(\O_L)$. The generalized Witt vectors retain many useful properties of the usual $p$-typical Witt vectors. In this section, we briefly go over the results we will need in the rest of the paper.

Recall that given a perfect $\Fp$-algebra $R$, we can define $W(R)$ functorially as as the unique strict $p$-ring $W(R)$ for which $W(R)/(p)\cong R$. The analogous statement is true for $\WLE$.

\begin{lemma}\label{strictring}
We have $\WLE/(\varpi)=\O_L$, and $\WLE$ is $\varpi$-torsion-free, $\varpi$-adically complete and separated.
\end{lemma}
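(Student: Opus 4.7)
The plan is to extract everything from the unique representation $x = \sum_{i \geq 0} \varpi^i [\overline{x}_i]$ stated in the setup, using the fact that multiplication by $\varpi$ acts as a shift on such expansions:
\[
\varpi \cdot \sum_{i \geq 0} \varpi^i [\overline{x}_i] = \sum_{i \geq 1} \varpi^i [\overline{x}_{i-1}].
\]
This single identity, combined with the uniqueness of the expansion, is really the engine driving all four claims.

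First I would deduce $\varpi$-torsion-freeness: if $\varpi x = 0$ with $x = \sum \varpi^i[\overline{x}_i]$, then the shifted expansion has all coefficients zero, so by uniqueness each $\overline{x}_i = 0$ and hence $x = 0$. Iterating the shift, I would then identify
\[
\varpi^n \WLE = \Bigl\{\sum_{i \geq n} \varpi^i [\overline{x}_i] : \overline{x}_i \in \O_L\Bigr\}.
\]
From this characterization, $\varpi$-adic separatedness follows immediately: any element in $\bigcap_n \varpi^n \WLE$ has all expansion coefficients equal to zero. For $\varpi$-adic completeness, given a Cauchy sequence $(x_n)$ with expansions $x_n = \sum_i \varpi^i [\overline{x}_{n,i}]$, the characterization of $\varpi^n \WLE$ forces the coefficients $\overline{x}_{n,i}$ to stabilize as $n \to \infty$ for each fixed $i$; letting $\overline{x}_i$ denote the stable value and $x = \sum_i \varpi^i [\overline{x}_i]$, one checks $x_n \to x$ in the $\varpi$-adic topology.

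Finally, for the quotient, the Teichm\"uller map $[\cdot]\colon \O_L \to \WLE$ composed with reduction mod $\varpi$ gives a set-theoretic bijection $\O_L \to \WLE/(\varpi)$: surjectivity is clear from the representation, and injectivity uses the $n=1$ case of the kernel description above. The nontrivial point is that this bijection is a ring homomorphism, i.e.\ that addition and multiplication in $\WLE$ match those in $\O_L$ on $0$-th coordinates modulo $\varpi$. I would invoke the general theory of generalized Witt vectors (\cite[Section 2]{CD} or \cite[Sections 5--6]{FF}) to supply this; concretely, the universal polynomials describing the ring operations on $\WLE$ reduce to the $\O_L$ operations on the $0$-th coordinate, exactly as in the classical $p$-typical case.

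The main (minor) obstacle is the last point: invoking the ring-homomorphism property of the reduction map. Everything else is purely formal once the unique expansion and the shift identity for multiplication by $\varpi$ are in hand, and these are baked into the construction of $\WLE$ as a ramified Witt vector ring.
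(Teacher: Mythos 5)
Your proposal is correct, but it takes a genuinely different route: the paper offers no argument at all for this lemma, simply citing \cite[Proposition 2.12]{CD}, whereas you derive everything from the unique Teichm\"uller expansion $x=\sum_{i\ge 0}\varpi^i[\overline{x}_i]$ that the paper asserts in its setup. Granting that expansion, your deductions are sound: the shift identity and uniqueness give torsion-freeness and the description of $\varpi^n\WLE$, hence separatedness; for completeness you do implicitly need that congruence mod $\varpi^N$ forces agreement of the first $N$ coefficients, which follows from the triangular form of the addition polynomials (Lemma \ref{witthomog}) rather than from uniqueness alone, but that is a routine point. You also correctly isolate the one genuinely non-formal input, namely that reduction mod $\varpi$ composed with $[\cdot]$ is a ring isomorphism onto $\O_L$, and you outsource it to \cite{CD}/\cite{FF}. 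The only caveat worth flagging is logical order: in the sources, the unique Teichm\"uller expansion is itself a consequence of the strict $\varpi$-ring property ($\varpi$-torsion-freeness, completeness, and $\WLE/(\varpi)\cong\O_L$), so as a from-scratch proof your argument would be circular; as a derivation internal to this paper, which states the expansion up front and then records the lemma, it is a legitimate and more self-contained alternative to the bare citation.
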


\begin{proof}
\cite[Proposition 2.12]{CD}
\end{proof}

\begin{lemma}\label{witthomog}
The addition law is given by $$\displaystyle\sum_{i\geq 0} [\overline{x}_i]\varpi^i +\displaystyle\sum_{i\geq 0} [\overline{y}_i]\varpi^i =\displaystyle\sum_{i\geq 0} [\overline{z}_i]\varpi^i$$ where $\overline{z}_i$ is a polynomial in $\overline{x}_j^{q^{j-i}},\overline{y}_j^{q^{j-i}}$ for $j=0,\dots,i$. This polynomial has integer coefficients and is homogeneous of degree $1$ for the weighting in which $\overline{x}_j,\overline{y}_j$ have degree $1$. The analogous statement is true for multiplication.
\end{lemma}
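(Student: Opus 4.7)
The plan is to reduce to a universal computation and then adapt the classical Witt vector argument with $q$ playing the role of $p$. Let $R = \F_q[X_j^{q^{-\infty}}, Y_j^{q^{-\infty}} : j \geq 0]$ be the perfect closure of the polynomial ring on universal variables, and consider the elements $X = \sum_j [X_j]\varpi^j$ and $Y = \sum_j [Y_j]\varpi^j$ in $W_\varpi(R)$. Writing $X + Y = \sum_i [Z_i]\varpi^i$ for uniquely determined $Z_i \in R$, functoriality of $W_\varpi$ (applied to the homomorphism $R \to \O_L$ sending $X_j \mapsto \overline{x}_j, Y_j \mapsto \overline{y}_j$) reduces the addition claim to showing each $Z_i$ is an integer-coefficient polynomial in $X_j^{q^{j-i}}, Y_j^{q^{j-i}}$ (for $j = 0, \dots, i$), homogeneous of weight $1$ in the weighting $\deg X_j = \deg Y_j = 1$. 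The multiplication claim will be handled identically using the analogous universal multiplication polynomials. Observe also that the restriction to $j \leq i$ is immediate from the $\varpi$-adic filtration: reduction modulo $\varpi^{i+1}$ depends only on $X_0, \dots, X_i$ and $Y_0,\dots,Y_i$, and thereby determines $Z_0, \dots, Z_i$.

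For the explicit form I would pass through the alternative ``Witt coordinate'' system on $W_\varpi(R)$ of \cite[Section 2]{CD}, in which addition is given by polynomials $S_i \in \Z[X_0^\#, \dots, X_i^\#, Y_0^\#, \dots, Y_i^\#]$ determined by the ghost identities $w_i(S_0,\dots,S_i) = w_i(X^\#) + w_i(Y^\#)$, where $w_i = \sum_{j=0}^i \varpi^j (X_j^\#)^{q^{i-j}}$. The ghost polynomial $w_i$ is manifestly homogeneous of weight $q^i$ in the weighting $\deg X_j^\# = q^j$, and a standard induction on $i$ (using uniqueness of the $S_i$ once the ghost map is injective after inverting $\varpi$) transfers this homogeneity to each $S_i$.

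To translate back to the Teichmuller coordinates appearing in the lemma, I would use the strict $\varpi$-ring identity $X_j^\# = \overline{x}_j^{q^j}$, a consequence of the generalized Verschiebung relation $V([a]) = \varpi [a^{1/q}]$ together with the expansion $\sum_j V^j([X_j^\#]) = \sum_j \varpi^j [(X_j^\#)^{1/q^j}]$ in any strict $\varpi$-ring. Substituting $X_j^\# \mapsto \overline{x}_j^{q^j}$ into a typical monomial $c \prod_j (X_j^\#)^{a_j}(Y_j^\#)^{b_j}$ of $S_i$ (with $\sum_j (a_j + b_j)q^j = q^i$ by the homogeneity of $S_i$) and extracting the $q^i$-th root term-by-term in the perfect characteristic-$p$ ring $R$—which is legitimate because Frobenius is additive and fixes the integer coefficients in $\F_p$—produces $c \prod_j \overline{x}_j^{a_j q^{j-i}} \overline{y}_j^{b_j q^{j-i}}$, a monomial of weight $\sum_j (a_j+b_j)q^{j-i} = 1$ in the claimed variables. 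Summing over the monomials of $S_i$ yields $\overline{z}_i$ in the desired form. The main obstacle is the careful bookkeeping at the boundary between the Witt and Teichmuller coordinate systems—verifying the relation $X_j^\# = \overline{x}_j^{q^j}$ and the integer-coefficient weighted homogeneity of the generalized Witt polynomials $S_i$ for possibly ramified $\O_E$; once this is pinned down, the homogeneity in Teichmuller coordinates transfers transparently from that of the ghost polynomials.
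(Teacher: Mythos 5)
The paper does not actually prove this lemma; it cites \cite[Remarque 5.14]{FF} and moves on. Your proposal is therefore a from-scratch argument, and its architecture is the standard one for such statements: reduce to universal polynomials over the perfection of a polynomial ring, prove weighted homogeneity of the ramified Witt structure polynomials $S_i$ by induction from the ghost identities (where $w_i$ is visibly homogeneous of weight $q^i$ for $\deg X_j^{\#}=q^j$), convert to Teichm\"uller coordinates via $X_j^{\#}=\overline{x}_j^{q^j}$, and extract $q^i$-th roots monomial by monomial using perfectness. The coordinate-change identity does follow from $V([a])=\varpi[a^{1/q}]$ as you say, the functorial specialization and the truncation mod $\varpi^{i+1}$ are fine, and the homogeneity bookkeeping at the end is correct. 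One caveat on sourcing: \cite[Section 2]{CD} builds $W_\varpi(\O_L)$ from the tensor product $W(\O_L)\otimes_{W(\F_q)}\O_E$ and strictness, not from Witt/ghost coordinates, so the ramified ghost formalism and its compatibility with the Teichm\"uller expansion used here must be imported from elsewhere and checked; this is routine but not in the cited reference.

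The one genuine error is the assertion that $S_i\in\Z[X_0^{\#},\dots,Y_i^{\#}]$. The ramified addition polynomials are obtained by recursively solving $S_i=X_i+Y_i+\varpi^{-i}\sum_{j<i}\varpi^j\bigl(X_j^{q^{i-j}}+Y_j^{q^{i-j}}-S_j^{q^{i-j}}\bigr)$, so their coefficients lie in $\O_E$ and not in $\Z$: already $S_1=X_1+Y_1-\varpi^{-1}\sum_{0<j<q}\binom{q}{j}X_0^jY_0^{q-j}$ has coefficients $-\binom{q}{j}/\varpi$. This is harmless for homogeneity, which is insensitive to the coefficient ring, but it breaks the final step of your argument for the ``integer coefficients'' clause: after reduction mod $\varpi$ the coefficients land in the residue field of $E$, and they can genuinely fail to lie in $\F_p$. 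For instance, with $E$ unramified and the uniformizer $\varpi=p[\zeta]$ for $\zeta\in\F_q^\times\setminus\F_p^\times$, the coefficient of $\overline{x}_0^{j/q}\overline{y}_0^{1-j/q}$ in $\overline{z}_1$ is $-\zeta^{-1}\bigl(\binom{q}{j}/p\bmod p\bigr)$. So your method proves coefficients in $\F_q$ --- which is all that the application in Lemma \ref{gaussnorm} requires, since the coefficients only need to have trivial norm, and which is what \cite{FF} actually assert --- but not literally integer coefficients; to get the statement as printed you would have to either restrict the choice of $\varpi$ or weaken the conclusion to $\F_q$-coefficients.
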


\begin{proof} 
\cite[Remarque 5.14]{FF}.
\end{proof}

We conclude this section with a result on factoring in $\WLE$.

\begin{definition}\label{stable}
An element $x\in\WLE$ is \emph{stable} if it has the form $\sum_{i=0}^\infty \varpi^i[\overline{x_i}]$ with either $|\overline{x_i}|=0$ for all $i\geq 0$ or $|\overline{x_0}|>p^{-i}|\overline{x_i}|$ for all $i>0$. 
\end{definition}

\begin{theorem}\label{Wfactor}
Assume that $L$ is algebraically closed. For $x\in\WLE$ nonzero and not stable, we can write $x=y(\varpi-[u_1])\cdots (\varpi-[u_n])$ for some nonzero stable $y\in \WLE$ and some $u_1,\dots,u_n\in\O_L$ with $|u_1|,\dots,|u_n|\leq p^{-1}$.
\end{theorem}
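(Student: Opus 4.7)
The plan is a Weierstrass-preparation-style argument that peels off the factors $(\varpi-[u_i])$ one at a time, guided by the Newton polygon of $x$. Writing $v_L := -\log|\bullet|$, I attach to $x=\sum_{i\ge 0}\varpi^i[\overline{x_i}]$ its Newton polygon, namely the lower convex hull of the points $\{(i,v_L(\overline{x_i})):\overline{x_i}\ne 0\}\subset\R^2$. Unwinding Definition~\ref{stable}, nonzero stability is equivalent to every slope of this polygon being strictly greater than $-\log p$; a factor $(\varpi-[u])$ with $|u|\le p^{-1}$ should be the Witt-vector analogue of a linear factor contributing one unit of horizontal length at slope $-v_L(u)\le-\log p$. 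So the task is to strip all such segments from the Newton polygon.

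The core step is the single-factor extraction: if $x$ is nonzero and non-stable, then there exists $u\in\O_L$ with $|u|\le p^{-1}$ such that $(\varpi-[u])$ divides $x$ in $\WLE$. When $\overline{x_0}=0$ this is immediate with $u=0$, so I would assume $\overline{x_0}\ne 0$, in which case the Newton polygon has a well-defined leftmost (steepest) slope $-\lambda\le-\log p$. Using that $L$ is algebraically closed, solve the ``leading-term'' polynomial equation extracted from this extremal segment to produce $u_0\in\O_L$ with $v_L(u_0)=\lambda$, and then refine by successive approximation: at each stage the error $x-(\varpi-[u_k])y_k$ will sit in a strictly deeper layer of a filtration of $\WLE$ adapted to the Newton polygon, and Lemma~\ref{witthomog} lets me express the next correction $\delta_k$ as the solution of a linear equation over $\O_L$, solvable again by algebraic closure. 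The $u_k$ converge in $\O_L$ to some $u$ with $|u|\le p^{-1}$, and the corresponding approximate quotients converge $\varpi$-adically in $\WLE$ by Lemma~\ref{strictring} to the sought $y$.

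With single-factor extraction in hand, I iterate. Each division by $(\varpi-[u])$ removes one unit of horizontal length of slope $\le-\log p$ from the Newton polygon, and the total such length in the original polygon is bounded: starting from $(0,v_L(\overline{x_0}))$ and descending with slope $\le-\log p$ for horizontal length $\ell$ would land at height $\le v_L(\overline{x_0})-\ell\log p$, but each $v_L(\overline{x_i})\ge 0$ forces the polygon to stay nonnegative, whence $\ell\le v_L(\overline{x_0})/\log p<\infty$. After finitely many iterations the remaining factor $y$ has no bad slopes left; it is stable and nonzero (because $x$ is), yielding the desired factorization $x=y(\varpi-[u_1])\cdots(\varpi-[u_n])$.

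The main obstacle is the single-factor step. Witt-vector addition is highly nonlinear (Lemma~\ref{witthomog}), so tracking how multiplication by $(\varpi-[u])$ interacts with the Teichm\"uller expansion of $y$ and cleanly isolates the extremal Newton slope of $x$ from the remaining coefficients requires careful bookkeeping: one must verify that the filtration used to measure approximation quality is preserved under multiplication by $(\varpi-[u])$ and that the linear equation governing each correction really is solvable in $\O_L$. The algebraic closure hypothesis on $L$ enters precisely here, furnishing roots of the polynomial equations at the base of and between successive approximations.
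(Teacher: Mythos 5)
The paper does not prove this theorem at all: it is quoted directly from Fargues--Fontaine \cite[Th\'eor\`eme 6.46]{FF}, and your Newton-polygon/Weierstrass-division plan is essentially the strategy of that reference (their ``polygone de Newton'' formalism for $W_{\O_E}(\O_F)$ and the successive-approximation division algorithm). So you are on the right road, but what you have written is a road map rather than a proof, and the gap sits exactly where you yourself flag it: the single-factor extraction. You assert that the ``leading-term polynomial equation extracted from the extremal segment'' can be solved in $\O_L$ and that each correction $\delta_k$ satisfies a linear equation over $\O_L$, but because of Lemma~\ref{witthomog} the condition $(\varpi-[u])\mid x$ is \emph{not} a polynomial condition on $u$ in any naive sense: reducing $x$ modulo $\varpi-[u]$ replaces $\varpi^i$ by $[u]^i$ and the resulting expression $\sum_i [u^i\overline{x_i}]$ involves the Witt addition law, whose coordinates mix $q^{j}$-th powers of all earlier Teichm\"uller components. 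Identifying the correct equation for $u$, showing it has a root of the predicted valuation $\lambda$ because $L$ is algebraically closed, and verifying that the error after each correction genuinely drops in a filtration that is stable under multiplication by $\varpi-[u]$ is the entire analytic content of Th\'eor\`eme 6.46; none of it is carried out here.

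A second, smaller gap is in the termination step: the claim that ``each division by $(\varpi-[u])$ removes one unit of horizontal length of slope $\le -\log p$ from the Newton polygon'' is an additivity statement for Newton polygons under multiplication in $\WLE$, which in the Witt-vector setting is itself a theorem (again part of the Fargues--Fontaine machinery), not a formal consequence of the definitions. Without it you cannot conclude that the quotient after $n$ steps is stable, only that it is nonzero. If you want a self-contained proof you must either prove this multiplicativity or replace the count by a direct argument (e.g.\ bounding $n$ by the horizontal length of the slope-$\le -\log p$ part of the polygon of the \emph{original} $x$ and showing each extraction strictly decreases an explicit nonnegative integer invariant). As written, the argument is an accurate outline of the cited proof but does not substitute for it.
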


\begin{proof}
\cite[Th\'eor\`eme 6.46]{FF}.
\end{proof}

\section{The Berkovich spectrum of the generalized Witt vectors}

Let $R$ be a perfect $\Fp$-algebra, equipped with the trivial norm. In \cite[Theorem 8.17]{NG}, Kedlaya gives an explicit classification of the points of $\mathcal{M}(W(R))$. In this section, we will extend this result to $\mathcal{M}(W(\O_L)_E)$. This entire section follows Kedlaya's paper extremely closely, the arguments all carry over fairly directly due to the similarities of the rings $W(R)$ and $\WLE$ stated in Section \ref{GWV}. We therefore explain the differences caused by the change of rings, but when arguments are essentially identical to the original paper we simply give a sketch and a reference to the original proof.

We first define an analogue of the Gauss seminorm on $\WLE$.
\begin{lemma}\label{gaussnorm}
The function $\lambda:\WLE\ra\R$ given by $$\lambda \Big(\displaystyle\sum_{i=0}^\infty \varpi^i[\overline{x}_i]\Big)=\displaystyle\max_{i}\big\{p^{-i}|\overline{x}_i|\big\}$$ is multiplicative and bounded by the $p$-adic norm.
\end{lemma}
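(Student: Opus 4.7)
The plan is to verify the two claims --- the bound by the $p$-adic norm and the multiplicativity of $\lambda$ --- directly from the Teichm\"uller expansion and the homogeneity properties recorded in Lemma~\ref{witthomog}. The bound by the $p$-adic norm is essentially immediate: since $|\overline{x}_i|\leq 1$ for every $\overline{x}_i\in\O_L$, we have $\lambda(x)=\max_i p^{-i}|\overline{x}_i|\leq p^{-v_\varpi(x)}$, where $v_\varpi(x)$ is the smallest index $i$ with $\overline{x}_i\neq 0$; this is exactly the $p$-adic (equivalently, $\varpi$-adic) norm of $x$.

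For multiplicativity, I would first establish the non-archimedean inequality $\lambda(x+y)\leq\max\{\lambda(x),\lambda(y)\}$ as a warm-up. Writing $x+y=\sum\varpi^i[\overline{z}_i]$ via the addition formula, a typical monomial in $\overline{z}_i$ takes the form $c\prod_j \overline{x}_j^{a_j q^{j-i}}\prod_k \overline{y}_k^{b_k q^{k-i}}$ with $c\in\Z$ (so $|c|\leq 1$) and exponents constrained by $\sum_j a_j q^{j-i}+\sum_k b_k q^{k-i}=1$ via the degree-one homogeneity. Substituting $|\overline{x}_j|\leq p^j\lambda(x)$ and $|\overline{y}_k|\leq p^k\lambda(y)$ bounds each monomial by $\lambda(x)^s\lambda(y)^t p^{\sum j a_j q^{j-i}+\sum k b_k q^{k-i}}$ with $s+t=1$; since $j,k\leq i$, the $p$-exponent is at most $i$, giving $p^{-i}|\overline{z}_i|\leq\max\{\lambda(x),\lambda(y)\}$. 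The analogous calculation for the multiplication law --- using the corresponding bihomogeneous form from Lemma~\ref{witthomog} --- then yields the submultiplicative bound $\lambda(xy)\leq\lambda(x)\lambda(y)$, with the key structural observation that the $p$-exponent of each monomial of $\overline{z}_i$ remains bounded by $i$ (equality being achieved by the diagonal terms $\overline{x}_j\overline{y}_k$ with $j+k=i$).

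The main obstacle is the reverse inequality $\lambda(xy)\geq\lambda(x)\lambda(y)$. My plan is to isolate the leading contribution: pick $i_x$ to be the largest index achieving $\lambda(x)=p^{-i_x}|\overline{x}_{i_x}|$, and $i_y$ similarly for $y$. In the coefficient $\overline{z}_{i_x+i_y}$ of $xy$, the diagonal monomial $\overline{x}_{i_x}\overline{y}_{i_y}$ contributes $|\overline{x}_{i_x}|\cdot|\overline{y}_{i_y}|$ in norm by multiplicativity of $|\bullet|$ on $L$, so $p^{-(i_x+i_y)}|\overline{x}_{i_x}\overline{y}_{i_y}|=\lambda(x)\lambda(y)$. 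The delicate point is that every other monomial contributing to $\overline{z}_{i_x+i_y}$ has strictly smaller norm: for alternate diagonal terms $\overline{x}_j\overline{y}_k$ with $j+k=i_x+i_y$ but $(j,k)\neq(i_x,i_y)$, the maximal choice of $i_x,i_y$ forces $p^{-j}|\overline{x}_j|<\lambda(x)$ or $p^{-k}|\overline{y}_k|<\lambda(y)$; and the non-diagonal correction terms inherit a strictly smaller $p$-exponent from the structural fact above. Verifying this non-cancellation, and in particular ruling out coincidental collapse in the sum, is the heart of the proof.
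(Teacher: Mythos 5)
Your plan is correct and takes essentially the same route as the paper, whose proof is simply the observation that Kedlaya's argument for \cite[Lemma 4.1]{NG} carries over verbatim using the homogeneity of the Witt arithmetic polynomials (Lemma \ref{witthomog}); your submultiplicativity estimate and leading-term argument for $\lambda(xy)\geq\lambda(x)\lambda(y)$ are exactly that proof. The only detail you leave implicit in the ``non-cancellation'' step is that the diagonal monomial $\overline{x}_{i_x}\overline{y}_{i_y}$ occurs in $\overline{z}_{i_x+i_y}$ with integer coefficient prime to $p$ (in fact equal to $1$, since the carry terms contribute only monomials involving some $\overline{x}_j\overline{y}_k$ with $j+k<i_x+i_y$); once that is noted, the strict domination you describe together with the ultrametric property of $|\bullet|$ on $L$ rules out any collapse.
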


\begin{proof}
This is the analogue of \cite[Lemma 4.1]{NG} where the seminorm $\alpha$ (which is $|\cdot|$ in our case) is assumed to be multiplicative. The proof is identical to the normal Witt vector case, using the fact that addition and subtraction are defined on the $j$th Teichmuller component as homogeneous polynomials of degree $q^j$ as in Lemma \ref{witthomog}.
\end{proof}

This acts like the $(p^{-1})$-Gauss seminorm for the generator $\varpi$, and the $(r/p)$-Gauss seminorm can be defined by replacing $p^{-i}$ in the above lemma by $(r/p)^i$. We can use this to build analogues of Gauss seminorms for other generators and weights.

\begin{definition}\label{hdef}
Given $u\in \O_L$ with $|u|\leq p^{-1}$, let $\pi=\varpi-[u]$. Then given $r\in[0,1]$, we define the valuation $H(u,r)$ to be the quotient norm on $\WLE[T]/(T-\pi)\cong \WLE$ induced by the $(r/p)$-Gauss extension of $|\cdot|$ to $W(R)[T]$.
\end{definition}

To show that these valuations are multiplicative and to compute them easily, we define stable presentations.

\begin{definition}\label{stablepres}
As $L$ is complete, so is $\O_L$ and so $\WLE$ is $(\varpi, [u])$-adically complete, so any sum $\sum_{i=0}^\infty x_i\pi^i$ with $x_i\in\WLE$ converges to some limit $x$.  We say that the sequence $x_0,x_1,\dots$ forms a presentation of $x$ with respect to $u$. If each $x_i$ is stable (Definition \ref{stable}), we call this a \emph{stable presentation}.
\end{definition}

\begin{lemma}\label{stablemma}
For any $x\in\WLE$, there exists some $y=\sum^\infty_{i=0} \varpi^i[\overline{y_i}]\in\WLE$ with $x\equiv y \pmod \pi$ and $|\overline{y_0}|\geq |\overline{y_i}|$ for all $i>0$. By definition, $y$ is stable.
\end{lemma}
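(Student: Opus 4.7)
The plan is to construct $y$ as the limit of an iterative sequence. I set $y^{(0)}=x$, and inductively, given $y^{(n)}=\sum_i\varpi^i[\overline{y_i^{(n)}}]$ that does not yet satisfy the desired inequality, I select an index $i_n\geq 1$ with $|\overline{y_{i_n}^{(n)}}|>|\overline{y_0^{(n)}}|$ and define
\[
y^{(n+1)}=y^{(n)}-\varpi^{i_n}[\overline{y_{i_n}^{(n)}}]+[u^{i_n}\overline{y_{i_n}^{(n)}}].
\]
Since $\varpi^{i_n}-[u^{i_n}]=(\varpi-[u])\sum_{k=0}^{i_n-1}\varpi^{i_n-1-k}[u]^k\in\pi\WLE$, the congruence $y^{(n+1)}\equiv y^{(n)}\equiv x\pmod{\pi}$ is preserved at every stage.

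To control the iteration I use the Gauss norm $\lambda$ of Lemma \ref{gaussnorm}: the hypothesis $|u|\leq p^{-1}$ gives $\lambda(y^{(n+1)}-y^{(n)})\leq p^{-i_n}|\overline{y_{i_n}^{(n)}}|$. Applying Lemma \ref{witthomog} to re-expand $y^{(n+1)}$ in Teichm\"uller form shows that each new coefficient is an integer polynomial, homogeneous of degree $1$, in $q$-power roots of the old Teichm\"uller coefficients and of $u^{i_n}\overline{y_{i_n}^{(n)}}$; in particular $M_n:=\sup_i|\overline{y_i^{(n)}}|$ is non-increasing. With a careful choice of $i_n$ (essentially, the smallest index at which $M_n$ is achieved, perhaps adjusted to avoid cancellation at the $0$-th coordinate) one arranges that either $M_n$ strictly decreases or the smallest index achieving $M_n$ strictly grows, so in the non-terminating case $i_n\to\infty$ and $(y^{(n)})$ is $\lambda$-Cauchy.

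To conclude, I would verify that $\WLE$ is complete in the $\lambda$-topology: a $\lambda$-Cauchy sequence induces Cauchy sequences in $\O_L$ at each Teichm\"uller coordinate (by induction on the coordinate, using the Witt polynomial expressions from Lemma \ref{witthomog}), and the uniform bound $p^{-j}|\overline{z_j}|\leq p^{-j}$ forces $\lambda$-convergence of the reassembled series. The resulting limit $y=\lim_n y^{(n)}\in\WLE$ satisfies $y\equiv x\pmod{\pi}$, since by multiplicativity $z^{(n)}:=(y^{(n)}-x)/\pi$ is also $\lambda$-Cauchy with limit some $z\in\WLE$, giving $y-x=\pi z$. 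The stability inequality $|\overline{y_0}|\geq|\overline{y_j}|$ for $j\geq 1$ then follows by passing to the limit the analogous inequality valid in each $y^{(n)}$ once $n$ is large enough that $i_n>j$.

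The main technical obstacle is managing the iteration in edge cases where adding $[u^{i_n}\overline{y_{i_n}^{(n)}}]$ produces cancellation in the $0$-th Teichm\"uller coefficient; the naive choice of $i_n$ need not force monotone growth of the smallest bad index there, so a careful selection criterion is required. A secondary difficulty is justifying termwise convergence of Teichm\"uller coefficients from $\lambda$-convergence of $y^{(n)}$: the Teichm\"uller expansion is not linear under Witt-vector subtraction, so one must argue inductively on the coordinate via the polynomial structure provided by Lemma \ref{witthomog}.
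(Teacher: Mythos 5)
Your iterative scheme is in the right spirit---like the paper, it exploits $\varpi\equiv[u]\pmod{\pi}$ and $|u|\leq p^{-1}$ to push mass from higher Teichm\"uller slots into the $0$-th---but the two points you flag as ``technical obstacles'' are the entire content of the lemma, and as written the argument does not close. First, the convergence claim: ``either $M_n$ strictly decreases or the smallest index achieving $M_n$ strictly grows'' would not suffice even if proved, since $M_n$ lives in the dense value group of $L$ and can strictly decrease to a positive limit while $i_n$ stays bounded, in which case $\lambda(y^{(n+1)}-y^{(n)})\leq p^{-i_n}M_n$ does not tend to $0$. The dichotomy itself is also threatened by the carries: zeroing slot $i_n$ and adding $[u^{i_n}\overline{y_{i_n}^{(n)}}]$ perturbs every slot $j<i_n$ by terms whose norm the homogeneity of Lemma \ref{witthomog} only bounds by roughly $|u^{i_n}\overline{y_{i_n}^{(n)}}|^{q^{-j}}M_n^{1-q^{-j}}$, far larger than $|u^{i_n}\overline{y_{i_n}^{(n)}}|$ for small $j$, so bad indices below $i_n$ can be created.

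Second, and more seriously, the concluding step rests on a false premise: the inequality $|\overline{y_0^{(n)}}|\geq|\overline{y_j^{(n)}}|$ is \emph{not} valid at finite stages once $i_n>j$, because your rule only corrects maximal slots (a bad index $j$ with $|\overline{y_0^{(n)}}|<|\overline{y_j^{(n)}}|<M_n$ is never touched) and slot $0$ keeps changing at every step. In the limit, $\overline{y_0}=\overline{x_0}+\sum_n u^{i_n}\overline{y_{i_n}^{(n)}}$ is an infinite sum in $L$ that can become arbitrarily small by cancellation while other coefficients do not; nothing in your argument excludes this, and it is precisely the case the lemma must handle. The paper sidesteps both issues with a one-shot construction: it replaces $\varpi^i$ by $[u]^i$ on an initial segment and splits into two cases according to whether some finite partial sum $\sum_{i=0}^N[u]^i[\overline{x_i}]$ eventually dominates all remaining coefficients (in which case the tail is left in $\varpi$-adic form) or the entire series must be converted. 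You would need an analogous case analysis, not a limit of stagewise inequalities, to control the cancellation in slot $0$.
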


\begin{proof}
This follows the construction of \cite[Lemma 5.5]{NG} but is a bit simpler as our $\pi$ is of the form $\varpi-[u]$ instead of some general primitive element. For any integer $j\geq 0$, we can write $$x=\displaystyle\sum_{i=0}^\infty \varpi^i[\overline{x_i}]\equiv \displaystyle\sum_{i=0}^j [u]^i[\overline{x_i}]+\displaystyle\sum_{i=j+1}^\infty \varpi^i[\overline{x_i}]\pmod \pi.$$ 
As $j$ grows, $\sum_{i=j+1}^\infty \varpi^i[\overline{x_i}]$ goes to zero. So either there exists some $N>0$ such that $|\sum_{i=0}^N [u]^i[\overline{x_i}]|\geq |x_n|$ for all $n>N$ or the sum $\sum_{i=0}^\infty [u]^i[\overline{x_i}]$ converges. In the first case, we let $y=\displaystyle\sum_{i=0}^N [u]^i[\overline{x_i}]+\displaystyle\sum_{i=N+1}^\infty \varpi^i[\overline{x_i}],$ in the second case, we let $y=\displaystyle\sum_{i=0}^\infty [u]^i[\overline{x_i}].$
\end{proof}

\begin{lemma}\label{stabexist}
Every element of $\WLE$ admits a stable presentation.
\end{lemma}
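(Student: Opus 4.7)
The plan is to iterate Lemma \ref{stablemma} to peel off stable pieces one factor of $\pi$ at a time, then invoke $(\varpi,[u])$-adic completeness from Lemma \ref{strictring} to pass to the limit. Given $x\in\WLE$, I would construct sequences $(x_i)_{i\geq0}$ of stable elements and $(r_i)_{i\geq0}$ of elements of $\WLE$ recursively, setting $r_0:=x$, and then, given $r_i$, applying Lemma \ref{stablemma} to $r_i$ to produce a stable $x_i\in\WLE$ with $r_i\equiv x_i\pmod{\pi}$. This congruence means $r_i-x_i\in(\pi)$, so there exists $r_{i+1}\in\WLE$ with $r_i-x_i=\pi\,r_{i+1}$, and I can feed $r_{i+1}$ back into the construction.

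A straightforward induction on $N$ then gives the identity
\[
x=\sum_{i=0}^{N} x_i\pi^i+\pi^{N+1}r_{N+1}.
\]
It remains to check that $\pi^{N+1}r_{N+1}\to 0$ as $N\to\infty$, so that $x=\sum_{i=0}^\infty x_i\pi^i$ in $\WLE$ and the $x_i$ form a stable presentation. For this, I would observe that $\pi=\varpi-[u]\in(\varpi,[u])$, so $\pi^{N+1}\in(\varpi,[u])^{N+1}$, and hence $\pi^{N+1}r_{N+1}\in(\varpi,[u])^{N+1}$ for any $r_{N+1}\in\WLE$; since $\WLE$ is $(\varpi,[u])$-adically complete and separated (cf.\ the discussion preceding Definition \ref{stablepres}, or Lemma \ref{strictring} combined with $[u]\in\varpi\WLE+\WLE\cdot[u]$ having small image under $\lambda$), the tail tends to $0$ independently of the $r_{N+1}$'s.

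The main subtlety is that Lemma \ref{stablemma} only produces an element $y$ whose Teichm\"uller components satisfy $|\overline{y_0}|\geq|\overline{y_i}|$, which a priori is weaker than the strict inequality $|\overline{y_0}|>p^{-i}|\overline{y_i}|$ demanded by Definition \ref{stable}. However, for $i\geq 1$ the factor $p^{-i}$ is strictly less than $1$, so $|\overline{y_0}|\geq|\overline{y_i}|$ already implies $|\overline{y_0}|>p^{-i}|\overline{y_i}|$ whenever $|\overline{y_i}|\neq0$; if instead $|\overline{y_i}|=0$ for all $i$ then $y=0$, which is stable by the degenerate clause of Definition \ref{stable}. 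So the $x_i$ produced by Lemma \ref{stablemma} at each step are genuinely stable. The only other thing worth remarking is that at each stage the existence of $r_{i+1}$ with $r_i-x_i=\pi r_{i+1}$ uses that $\pi$ is a nonzerodivisor in $\WLE$, which follows from $\WLE$ being a domain (it embeds in a ring of formal power series via the Teichm\"uller expansion, or one can argue directly from Lemma \ref{strictring} since $\WLE/(\varpi)=\O_L$ is a domain and $\WLE$ is $\varpi$-adically separated). With these points addressed, the construction and the completeness argument assemble into the desired stable presentation.
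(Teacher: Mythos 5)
Your proof is correct and follows essentially the same route as the paper: iterate Lemma \ref{stablemma} to peel off each stable coefficient $x_i$, and use the $(\varpi,[u])$-adic completeness already noted in Definition \ref{stablepres} to sum the series $\sum_i x_i\pi^i$. The extra checks you supply (that the output of Lemma \ref{stablemma} is genuinely stable in the sense of Definition \ref{stable}, and that the remainders $r_{i+1}$ exist) are details the paper leaves implicit but do not change the argument.
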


\begin{proof}
This is the analogue of \cite[Lemma 5.7]{NG}. Given $x,x_0,x_1,\dots, x_{i-1}\in \WLE$, apply Lemma \ref{stablemma} to construct $x_i$ congruent to $(x-\sum_{j=0}^{i-1}x_j)/\pi^i\pmod \pi$. This process yields a stable presentation of $x_0,x_1,\dots$ of $x$.
\end{proof}

\begin{theorem}\label{hmult}
The function $H(u,r)$ is a multiplicative seminorm and bounded by $\lambda$. Given any stable presentation $x_0,x_1,\dots$ of $x\in \WLE$, $H(u,r)=\max_i\{(r/p)^i\lambda(x_i)\}$.
\end{theorem}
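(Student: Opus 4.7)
The plan is to first establish the formula $H(u,r)(x) = \max_i (r/p)^i \lambda(x_i)$ for any stable presentation, and then derive multiplicativity and boundedness by $\lambda$ as consequences. The key preliminary observation is that the $(r/p)$-Gauss extension of $\lambda$ to $\WLE[T]$ (call it $\mu_r$) is itself multiplicative, since $\lambda$ is by Lemma \ref{gaussnorm}. Boundedness by $\lambda$ is then immediate from the trivial presentation $x = x\cdot\pi^0 + 0\cdot\pi + \cdots$: by the definition of the quotient seminorm, $H(u,r)(x) \leq \lambda(x)$.

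For the upper bound $H(u,r)(x) \leq \max_i (r/p)^i \lambda(x_i)$ in the stable case, I would truncate: the polynomials $P_N = \sum_{i=0}^N x_i T^i \in \WLE[T]$ satisfy $P_N(\pi) \to x$ in the $(\varpi,[u])$-adic topology by Definition \ref{stablepres}, and $\mu_r(P_N) \leq \max_i (r/p)^i \lambda(x_i)$, so continuity of the quotient seminorm delivers the bound.

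The heart of the proof is the reverse inequality. Given any polynomial $P = \sum_j y_j T^j$ with $P(\pi) = x$, I plan to convert $P$ into a stable presentation by applying the stabilization procedure of Lemma \ref{stablemma} to each coefficient in turn and propagating the residual piece $(y_j - \tilde y_j)/\pi$ into the coefficient of $\pi^{j+1}$. The goal is to show each step preserves (or decreases) the weighted maximum $\max_j (r/p)^j \lambda(y_j)$. The hypothesis $|u| \leq p^{-1}$ is used here in an essential way: when we push the unstable part down by one index, the extra weight factor $r/p$ together with the bound $\lambda([u]) \leq p^{-1}$ guarantees that the new contribution at level $j+1$ is dominated by $(r/p)^j \lambda(y_j)$. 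Iterating and passing to the limit by $(\varpi,[u])$-completeness yields a stable presentation whose weighted maximum is at most $\mu_r(P)$, and a direct comparison argument shows that any two stable presentations of the same element share the same weighted maximum (using multiplicativity of $\lambda$ and the homogeneity of the Witt addition law from Lemma \ref{witthomog}). This pins down $H(u,r)(x) = \max_i (r/p)^i \lambda(x_i)$.

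Multiplicativity then follows easily: given stable presentations $x = \sum x_i \pi^i$ and $y = \sum y_j \pi^j$, the convolution $z_k = \sum_{i+j=k} x_i y_j$ produces a presentation of $xy$, and stabilizing preserves the weighted max by the previous paragraph. Multiplicativity of $\lambda$ together with the non-cancellation of leading Teichm\"uller components in products of stable elements gives $\max_k (r/p)^k \lambda(z_k) = \max_i (r/p)^i \lambda(x_i) \cdot \max_j (r/p)^j \lambda(y_j)$, and hence $H(u,r)(xy) = H(u,r)(x)\,H(u,r)(y)$. The main obstacle will be the stabilization/uniqueness step in the reverse inequality: carefully tracking how the weighted maximum behaves under the iterated substitution $\varpi = \pi + [u]$ and verifying that the hypothesis $|u| \leq p^{-1}$ truly forces the weighted max to be an invariant of the element rather than of the presentation.
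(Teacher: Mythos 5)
Your overall architecture is the right one, and it is essentially the argument of \cite[Theorem 5.11]{NG}, which is all the paper itself does: its proof of Theorem \ref{hmult} is the single sentence that the proof of that theorem ``carries over exactly'' because the needed properties of presentations hold in $\WLE$. Your upper bound via truncation, and your observation that one step of stabilization moves mass from index $j$ to index $j+1$ at a cost of at most $p\cdot(r/p)=r\le 1$, are correct and are the correct use of the hypotheses $|u|\le p^{-1}$ and $r\le 1$.

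The genuine gap is in the two places where you need a \emph{lower} bound, and both trace to the same missing lemma. Your stabilization procedure shows the weighted max does not \emph{increase}; it says nothing about whether it can strictly \emph{decrease}. So ``any two stable presentations share the same weighted maximum'' does not follow from it, and neither does ``stabilizing preserves the weighted max'' in the multiplicativity step --- as written that sentence contradicts the point of the construction, since for a non-stable presentation the weighted max genuinely can drop under stabilization, and the quotient norm can be strictly smaller than the weighted max of an arbitrary presentation. What is needed is the statement that for a \emph{stable} element $y$ one has $H(u,0)(y)=\lambda(y)$; equivalently, adding a multiple of $\pi=\varpi-[u]$ to a stable element cannot lower its $\lambda$-norm. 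This is exactly what the stability condition $|\overline{y_0}|>p^{-i}|\overline{y_i}|$ of Definition \ref{stable} is designed to guarantee (the leading Teichm\"uller coordinate strictly dominates and cannot be cancelled modulo $\pi$ because $|[u]|\le p^{-1}$), and it is the engine of the lemmas preceding \cite[Theorem 5.11]{NG}. With it, the weighted max of a stable presentation is pinned from below index by index, which yields both the invariance of the weighted max across stable presentations and the survival of the leading term $x_{i_0}y_{j_0}$ (with $i_0,j_0$ the minimal indices attaining the respective maxima) through the stabilization of the convolution. You should isolate and prove this lemma first; the rest of your plan then goes through.
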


\begin{proof}
The proof of \cite[Theorem 5.11]{NG} carries over exactly as all the needed properties of presentations in $W(R)$ also hold in $\WLE$. 
\end{proof}

The following computation will be useful later.

\begin{corollary}\label{hcalc}
For $u,u'\in \O_L$ with $|u|,|u'|\leq p^{-1}$ and $r\in[0,1]$, $$H(u,r)(\varpi-[u'])=\text{max}\{ r/p, H(u,0)(\varpi-[u'])\}.$$
\end{corollary}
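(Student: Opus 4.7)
The plan is to apply Theorem~\ref{hmult} to an explicit stable presentation of $\varpi-[u']$ with respect to $u$. Write $\varpi-[u'] = \pi + ([u]-[u'])$, and use Lemma~\ref{stablemma} to choose a stable $y_0 \in \WLE$ with $y_0 \equiv [u]-[u'] \pmod{\pi}$; set $w := ([u]-[u']-y_0)/\pi$, so that
$$\varpi - [u'] = y_0 + \pi(1+w).$$
If I can show that $(y_0, 1+w, 0, 0, \ldots)$ is a stable presentation and that $\lambda(1+w)=1$, Theorem~\ref{hmult} immediately gives
$$H(u,r)(\varpi-[u']) = \max\{\lambda(y_0),\, (r/p)\lambda(1+w)\} = \max\{\lambda(y_0),\, r/p\},$$
and specializing to $r=0$ identifies $\lambda(y_0)$ with $H(u,0)(\varpi-[u'])$, proving the corollary.

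The key technical input is the bound $\lambda(w) \leq p^{-1}$. To prove it, expand $[u]-[u'] = \sum_{i \geq 0}\varpi^i[\overline{z}_i]$. By Lemma~\ref{witthomog}, each $\overline{z}_i$ is a weighted-homogeneous polynomial of degree $1$ in $u^{q^{-i}}, (u')^{q^{-i}}$, so each monomial, being of the form $c\,u^{a/q^i}(u')^{b/q^i}$ with $a+b=q^i$, has norm at most $\max(|u|,|u'|) \leq p^{-1}$; hence $|\overline{z}_i| \leq p^{-1}$ for every $i$. In either case of the algorithm from Lemma~\ref{stablemma}, one verifies directly that $[u]-[u']-y_0 = \sum_{i \geq 1}(\varpi^i-[u]^i)[\overline{z}_i]$, and factoring $\varpi^i-[u]^i = \pi\sum_{j=0}^{i-1}\varpi^{i-1-j}[u]^j$ yields
$$w = \sum_{i \geq 1}\Bigl(\sum_{j=0}^{i-1}\varpi^{i-1-j}[u]^j\Bigr)[\overline{z}_i].$$
Multiplicativity and the ultrametric property of $\lambda$ bound the $i$-th summand by $p^{-(i-1)}|\overline{z}_i| \leq p^{-i}$, giving $\lambda(w) \leq p^{-1}$.

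Given the bound $\lambda(w) \leq p^{-1}$, both the stability of $1+w$ and the equality $\lambda(1+w)=1$ follow by ultrametric considerations: the leading Teichm\"uller component of $1+w$ is $1+\overline{w}_0$, which has norm $1$ since $|\overline{w}_0| \leq \lambda(w) \leq p^{-1} < 1$, while each subsequent Teichm\"uller component $\overline{c}_i$ lies in $\O_L$ and thus satisfies $p^{-i}|\overline{c}_i| \leq p^{-i} < 1$ for $i \geq 1$. The main obstacle is the estimate $\lambda(w) \leq p^{-1}$; once in hand, the rest of the proof is a direct application of Theorem~\ref{hmult}.
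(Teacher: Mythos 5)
Your argument is correct: producing the explicit two-term stable presentation $\varpi-[u']=y_0+\pi(1+w)$ with $\lambda(1+w)=1$ and invoking Theorem~\ref{hmult} (then specializing to $r=0$ to identify $\lambda(y_0)$ with $H(u,0)(\varpi-[u'])$) is exactly the strategy of the proof the paper delegates to \cite[Lemma 5.13]{NG}, and your homogeneity estimate $|\overline{z}_i|\leq p^{-1}$ together with the factorization of $\varpi^i-[u]^i$ correctly supplies the key bound $\lambda(w)\leq p^{-1}$. The only cosmetic imprecision is that in the first case of Lemma~\ref{stablemma} the sum $\sum_{i\geq 1}(\varpi^i-[u]^i)[\overline{z}_i]$ should be truncated at $i=N$, which does not affect the estimate.
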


\begin{proof}
\cite[Lemma 5.13]{NG}
\end{proof}

\begin{remark}\label{vzero}
\cite[Remark 5.14]{NG} As $H(u,0)$ is the quotient norm on $\WLE/(\pi)$ induced by $\lambda$, we have $H(u,0)(x)=0$ if and only if $x$ is divisible by $\pi$. 

Furthermore, any $v\in\mathcal{M}(\WLE)$ with $v(\pi)=0$ must equal $H(u,0)$. Given $x\in\WLE$, we can construct a stable presentation with respect to $\pi$. Then only the first term of the presentation will affect $v(x)$ as $v(\pi)=0$, so $v$ is exactly $H(u,0)$ by Theorem \ref{hmult}.
\end{remark}

\begin{lemma}\label{vzerorestrict}
For any $v\in\mathcal{M}(\WLE)$, there exists a perfect overfield $L'$ of $L$ complete with respect to a multiplicative nonarchimedean norm extending the one on $L$ and some ${u}\in\mathfrak{m}_{L'}\setminus\{0\}$ such that the restriction of $H({u},0)$ to $\WLE$ equals $v$.

\end{lemma}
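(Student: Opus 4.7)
The plan is to construct, for the given $v \in \M(\WLE)$, a perfect complete overfield $L'$ of $L$ and a nonzero $u \in \mathfrak{m}_{L'}$ such that, after extending $v$ spectrally to $W(\O_{L'})_E$, we have $v(\varpi - [u]) = 0$. By Remark \ref{vzero} this identifies $v$ with $H(u,0)|_{\WLE}$, as desired.

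First I would reduce to the case $L$ algebraically closed by extending $v$ to $W(\O_{\hat{\overline{L}}})_E$ via the standard existence of spectral extensions of Berkovich points; this makes Theorem \ref{Wfactor} available. As a preliminary, I would establish that $v([x]) = |x|$ for all $x \in \O_L$, using multiplicativity of $v$ combined with the bound $v \leq \lambda$ from Lemma \ref{gaussnorm} (for units of $\O_L$ this follows by inverting; for non-units a separate multiplicativity argument, possibly combined with consideration of $v|_{\O_E}$, should pin down the exponent). Then I would construct $u$ as the limit of a Cauchy sequence $\{u_n\}$ in a tower of extensions, starting with some $u_0 \in \O_L$ whose norm approximates $v(\varpi)$. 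At each stage, if $v(\varpi - [u_n]) > 0$ then $\varpi - [u_n]$ is neither zero nor a unit, and Theorem \ref{Wfactor} yields a factorization $\varpi - [u_n] = y_n \prod_j (\varpi - [w_j])$ with $y_n$ stable. Because $v$ agrees with $\lambda$ on Teichmullers and by the stability inequalities of Definition \ref{stable}, the leading Teichmuller term of $y_n$ dominates the stable remainder, giving $v(y_n) = \lambda(y_n)$; multiplicativity of $v$ then forces at least one factor $v(\varpi - [w_j])$ to be strictly less than $v(\varpi - [u_n])$ by a definite geometric ratio. Set $u_{n+1} := w_j$, enlarging the field as necessary. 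The sequence $\{u_n\}$ Cauchy-converges in the completed perfect closure of the compositum of all fields encountered, which serves as $L'$.

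The main obstacle is the preliminary claim $v([x]) = |x|$ on $\O_L$, and its upgrade $v(y) = \lambda(y)$ for $y$ stable, since these are precisely what force the $v$-values of the $\varpi - [u_n]$ to decay geometrically. The former property is in fact necessary for the conclusion, because $H(u,0)$ restricted to Teichmullers equals the norm on $L$, so any seminorm realizable as a restriction of some $H(u,0)$ must agree with the norm on Teichmullers. Deducing this from multiplicativity alone, together with the bound $v \leq \lambda$, should be possible via a careful analysis (perhaps using Theorem \ref{Wfactor} to rule out anomalous multiplicative functions $\O_L \to \R_{\geq 0}$ compatible with $v$). Once this is in hand, convergence of $\{u_n\}$ and perfectness of the resulting $L'$ follow routinely, and the identification with $H(u,0)|_{\WLE}$ is immediate from Remark \ref{vzero}.
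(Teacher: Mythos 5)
Your construction cannot reach most of the points it needs to, so the approach fails at its core step. After reducing to $L$ algebraically closed, every center $w_j$ that Theorem \ref{Wfactor} can hand you lies in $\O_{\tilde{L}}$, so your limit $u$ would be algebraic over $L$ and $L'=\tilde{L}$. But for any $v$ of positive radius $r$ (types 2--4 of Theorem \ref{berkclass} --- for instance the Gauss point $\lambda=\beta_{0,1}$ itself) one has $v(\varpi-[w])\geq r/p>0$ for \emph{every} $w\in\O_{\tilde{L}}$ by Corollary \ref{hcalc}, so the quantities $v(\varpi-[u_n])$ are bounded away from $0$ and no sequence of algebraic centers can converge to the required $u$; the overfield $L'$ must be transcendental over $L$. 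The iteration step is also vacuous as written: $\varpi-[u_n]$ is already a degree-one element, so Theorem \ref{Wfactor} merely returns $\varpi-[u_n]=y(\varpi-[w_1])$ with $y$ a stable unit satisfying $v(y)=\lambda(y)\leq 1$, whence $v(\varpi-[w_1])\geq v(\varpi-[u_n])$ --- there is no geometric decrease. Separately, your preliminary claim $v([x])=|x|$ for all $x\in\O_L$ is not deducible from multiplicativity and boundedness on $\WLE$: the assignment $\sum_i\varpi^i[\overline{x}_i]\mapsto|\overline{x}_0|^2$ is a bounded multiplicative seminorm with $v([x])=|x|^2$. (It is automatic for the rings $\Bi$, where every nonzero Teichm\"uller lift is invertible, and for $\WLE$ it is implicitly part of what is being classified; but it cannot be ``pinned down'' by the argument you sketch.)

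The proof the paper relies on (\cite[Lemma 6.3]{NP}, \cite[Definition 7.5]{NG}) produces the transcendental extension in a single structural step rather than by approximation: when $v(\varpi)>0$, the completed residue field $F=\mathcal{H}(v)$ (the completion of $\operatorname{Frac}(\WLE/\ker v)$ for $v$) is a perfectoid field; one takes $L'$ to be its tilt $F^\flat$, which contains $L$ and, granting $v([x])=|x|$, carries a norm extending that of $L$; the kernel of $\theta\colon W(\O_{L'})_E\to\O_F$ is generated by an element of the form $\varpi-[u]$ with $u\in\mathfrak{m}_{L'}\setminus\{0\}$, and then Remark \ref{vzero} identifies $v$ with the restriction of $H(u,0)$ exactly as in your endgame. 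If you want to keep a hands-on flavor, the piece worth salvaging from your sketch is that final identification; the center itself has to come from the residue field, not from roots of elements of $W(\O_{\tilde{L}})_E$.
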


\begin{proof}
This is shown in \cite[Lemma 6.3]{NP} for the rings $\Bi$ defined in \ref{ABdef}, the exact same proof will work for $\WLE$. The analogous construction for $p$-typical Witt vectors is in \cite[Definition 7.5]{NG}.
\end{proof}

This lemma is very important as it allows us to reduce our study of general seminorms of $\WLE$ to those in Definition \ref{hdef}. 

\begin{definition}\label{def:retract}
Given $v\in\mathcal{M}(\WLE)$ and $\rho\in[0,1]$, choose $L',{u}$ as in \ref{vzerorestrict} and define $H(v,\rho)$ to be the restriction of $H({u},\rho)$ to $\WLE$. We define the \emph{radius} of $v$ to be the largest $\rho\in[0,1]$ for which $H(v,\rho)=v$. This is well defined by continuity.
\end{definition}

\begin{lemma}\label{lemma:retract}
This definition doesn't depend on $L'$ or ${u}$ and defines a continuous map $H:\mathcal{M}(\WLE)\times [0,1]\ra\mathcal{M}(\WLE)$ such that $$H(H(v,\rho),\sigma)=H(v,\max\{\rho,\sigma\})\quad (v\in\mathcal{M}(\WLE);\rho,\sigma\in[0,1]).$$
\end{lemma}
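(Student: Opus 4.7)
The plan is to reduce all three claims to the multiplicative formula of Theorem \ref{hmult} and the computation of Corollary \ref{hcalc}, passing when necessary to a perfect complete algebraically closed extension $L''$ of $L$ so that Theorem \ref{Wfactor} becomes available. Given any $x\in\WLE\setminus\{0\}$, after such a base change we obtain a factorization $x = y\prod_{j=1}^n(\varpi-[c_j])$ with $y$ stable and $c_j\in\O_{L''}$, $|c_j|\leq p^{-1}$, and hence
$$H(u,\rho)(x) = \lambda(y)\prod_{j=1}^n \max\bigl\{\rho/p,\,H(u,0)(\varpi-[c_j])\bigr\}$$
by multiplicativity of $H(u,\rho)$ together with the natural extension of Corollary \ref{hcalc} to $c_j\in\O_{L''}$.

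To prove well-definedness, suppose $(L_1',u_1)$ and $(L_2',u_2)$ are two choices from Lemma \ref{vzerorestrict} producing the same $v$. Embedding both $L_i'$ into a common perfect complete algebraically closed $L''\supset L$, both $H(u_i,\rho)$ become seminorms on $W(\O_{L''})_E$ admitting the factorization formula above. What remains is to show that $H(u_1,0)(\varpi-[c_j]) = H(u_2,0)(\varpi-[c_j])$ for each $j$: when $c_j\in\O_L$ both values equal $v(\varpi-[c_j])$ directly, while for $c_j\in\O_{L''}\setminus\O_L$ one recovers each factor's value from the values that $v$ assigns to the elementary symmetric combinations of the $(\varpi-[c_1]),\dots,(\varpi-[c_n])$, which lie in $\WLE$ (being coefficients of $x/y$), by a Newton-polygon argument applied to the multiset of values. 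Matching the factor-wise valuations across different field extensions is the main obstacle; once handled, the formula for $H(u,\rho)(x)$ is manifestly intrinsic to $v$ and $\rho$, and simultaneously makes the maps involved well-defined.

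Continuity in $\rho$ is evident from the formula, and continuity in $v$ follows by noting that each factor $H(v,0)(\varpi-[c_j])$ varies continuously as $v$ varies in the Berkovich topology, since evaluation of $v$ at a fixed element of $\WLE$ is continuous and the Newton-polygon data used to pin down the individual factor valuations is built from finitely many such evaluations. For the identity $H(H(v,\rho),\sigma) = H(v,\max\{\rho,\sigma\})$, use Lemma \ref{vzerorestrict} to choose $(L'',u'')$ representing $H(v,\rho)$, factor $x$ in $W(\O_{L''})_E$, and apply the multiplicative formula to each side. The claim then reduces factor by factor to the identity $\max\{\sigma/p,\max\{\rho/p,\alpha\}\} = \max\{\max\{\rho,\sigma\}/p,\alpha\}$ with $\alpha = v(\varpi-[c_j])$, which is immediate from the associativity of $\max$ combined with Corollary \ref{hcalc} applied to $H(v,\rho)(\varpi-[c_j])$.
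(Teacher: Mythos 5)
The paper does not actually prove this lemma; it cites \cite[Theorem 7.8]{NG}, so your proposal has to stand as a self-contained argument. Your overall strategy --- factor $x$ over an algebraically closed extension via Theorem \ref{Wfactor}, evaluate each factor $\varpi-[c_j]$ by Corollary \ref{hcalc}, and observe that the resulting formula $H(u,\rho)(x)=\lambda(y)\prod_j\max\{\rho/p,\,H(u,0)(\varpi-[c_j])\}$ is ``manifestly intrinsic'' --- is sensible and close in spirit to how the paper argues in Lemma \ref{bigrad}. But the step you yourself flag as ``the main obstacle'' is precisely the content of the lemma, and your proposed resolution of it does not work. The elementary symmetric combinations $e_k\bigl(\varpi-[c_1],\dots,\varpi-[c_n]\bigr)$ for $k<n$ are elements of $W(\O_{L''})_E$, not of $\WLE$: they are not ``coefficients of $x/y$'' in any sense ($x/y$ is a single ring element, not a polynomial, and $y$ itself need not lie in $\WLE$), so there is no a priori reason that $v$ assigns them a value at all. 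To make a Newton-polygon argument run you would need the multiset $\{c_j\}$ to be stable under $\Gal(\overline{L}/L)$ and then descend the $e_k$ to $\WLE$ by Galois invariance; this requires the essential uniqueness of the factorization in Theorem \ref{Wfactor} and a descent statement for generalized Witt vectors, neither of which you establish. Worse, your $L''$ is a common complete extension of two auxiliary fields $L_1',L_2'$ produced by Lemma \ref{vzerorestrict}, which is far from algebraic over $L$, so Galois descent is not available in the form you would need.

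Because this identification of the factor-wise valuations is unproved, the later parts inherit the gap: continuity in $v$ is asserted to follow from ``Newton-polygon data built from finitely many such evaluations,'' and the semigroup identity silently uses that a center $u''$ representing $H(v,\rho)$ satisfies $H(u'',0)(\varpi-[c_j])=\max\{\rho/p,\,H(u,0)(\varpi-[c_j])\}$ for elements $\varpi-[c_j]$ lying outside $\WLE$ --- which is again the well-definedness of the extension of $H(v,\rho)$ beyond $\WLE$, i.e.\ the thing being proved. (Also, $\alpha=v(\varpi-[c_j])$ is undefined when $c_j\notin\O_L$.) The argument as written is therefore circular at its core; to repair it you would need to carry out the Galois-descent and Newton-polygon program in full, or else follow the route of \cite[Theorem 7.8]{NG}.
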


\begin{proof}
\cite[Theorem 7.8]{NG}
\end{proof}

Now let $\tilde{L}$ be a completed algebraic closure of $L$, there is a unique multiplicative extension of $|\cdot|$ to $\tilde{L}$ so we will continue to call this $|\cdot|$. Let $\O_{\tilde{L}}$ be the valuation ring of $\tilde{L}$ and equip $W(\O_{\tilde{L}})$ with the multiplicative norm $\tilde\lambda$.

\begin{definition}\label{vtilde}
For $u\in\O_{\tilde{L}}$ with $|u|\leq p^{-1}$ and $r\in [0,1]$, let $\tilde{\beta}_{u,r}$ be the valuation $H(u,r)$ and let $\beta_{u,r}$ be the restriction of $\tilde{\beta}_{u,r}$ to $\WLE$.
\end{definition}

\begin{remark}
There is a natural analogue of $\tilde\beta_{u,r}$ in $\mathcal{M}(K[T])$ where $K$ is an algebraically closed field. In that case, the seminorm can be identified with the supremum norm over the closed disc in $\C$ of center $u$ and radius $r$. An analogous statement holds here, Lemma \ref{balldef} implies that $\tilde\beta_{u,r}$ dominates the supremum norm. We won't use or prove this fact, but it may be helpful for intuition.
\end{remark}

We now give a very brief exposition of some useful properties of the valuations $\tilde{\beta}_{u,r}$ and $\beta_{u,r}$ that are needed for the classification. All proofs are now identical to those in \cite{NG}. By \cite[Lemma 8.3]{NG}, we have $\tilde{\beta}_{u,r}=\tilde{\beta}_{u',r}$ if and only if $r/p\geq\tilde{\beta}_{u',0}(\varpi-[u])$. We can therefore replace the center $u$ of $\tilde{\beta}_{u,r}$ with a nearby element $u'\in\O_{\tilde{L}}$ \cite[Corollary 8.4]{NG} which we can choose to be integral over $\O_L$ \cite[Corollary 8.5]{NG}. This integrality allows us to move to $\beta_{u,r}$: factoring the minimal polynomial of $u$ reduces computations to checking things of the form $p-[u_i]$. This type of argument implies that the radius works as expected, the radius of $\beta_{u,r}$ is $r$ \cite[Corollary 8.8]{NG}. 

This brings us to the key lemma for our classification.

\begin{lemma}\label{bigrad}
Given $v\in\mathcal{M}(\WLE)$ with radius $r$ and $s\in (r,1]$, there exists $u\in \O_{\tilde{L}}$ with $|u|\leq p^{-1}$ for which $H(v,s)=\beta_{u,s}$.
\end{lemma}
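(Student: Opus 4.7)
The plan is to first lift $v$ to a seminorm coming from a point $u'$ in a large perfect field, and then exploit the slack $s > r$ to approximate $u'$ by a suitable algebraic element of $\tilde L$.

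First, apply Lemma \ref{vzerorestrict} to obtain a perfect overfield $L'$ of $L$ complete for an extension of $|\bullet|$ and some $u' \in \mathfrak{m}_{L'}$; after shrinking if necessary we may assume $|u'| \leq p^{-1}$ so that $v = H(u', 0)|_{\WLE}$. Combining Definition \ref{def:retract} with Lemma \ref{lemma:retract} then gives $H(v, s) = H(u', s)|_{\WLE}$. It suffices to exhibit some $u \in \O_{\tilde L}$ with $|u| \leq p^{-1}$ for which $H(u, s)|_{\WLE} = H(u', s)|_{\WLE}$, and after embedding $L'$ and $\tilde L$ into a common complete algebraically closed extension $L''$, the analogue of \cite[Lemma 8.3]{NG} (already invoked above) tells us that a sufficient condition is $|u - u'| \leq s/p$ measured in $L''$.

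Next, I will construct $u$ by producing a monic polynomial $P(T) \in \O_L[T]$ of some degree $n$ with $|P(u')| < (s/p)^n$ in $L''$. Given such a $P$, factor it over the algebraic closure of $L$ as $P(T) = \prod_i (T - u_i)$ with $u_i \in \O_{\tilde L}$; the identity $\prod_i |u' - u_i| = |P(u')| < (s/p)^n$ and the ultrametric inequality force at least one root $u_i$ to satisfy $|u' - u_i| < s/p$. Since $|u'| \leq p^{-1}$ and $|u' - u_i| < 1/p$, this root automatically satisfies $|u_i| \leq p^{-1}$, and we take $u := u_i$.

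The existence of such a $P$ is where the hypothesis $s > r$ enters. Since $r$ is the radius of $v$, the seminorm $H(u', s)|_{\WLE}$ strictly dominates $v$ on some element $y \in \WLE$. Via a stable presentation $y = \sum_j y_j \pi^j$ (Lemma \ref{stabexist}) and Theorem \ref{hmult}, this strict domination provides an inequality of the form $(s/p)^j \lambda(y_j) > \lambda(y_0)$ for some $j \geq 1$. Matching this against the Taylor-like expansion $P(\varpi) = \sum_j P^{[j]}([u']) \pi^j$ for a suitably chosen monic $P$ built from the leading Teichm\"uller components of the $y_j$'s then produces the desired polynomial-value inequality $|P(u')| < (s/p)^n$.

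The main obstacle will be arranging $P$ to be monic with degree exactly $n$ matching the exponent in the value bound, rather than some smaller $j$. Handling this cleanly will likely involve applying Theorem \ref{Wfactor} to factor stable contributions off the witness $y$, isolating a genuinely polynomial obstruction whose degree can be calibrated against the power of $s/p$ appearing in the stable-presentation estimate. This is precisely the technical heart that in the ordinary Witt vector case is hidden in \cite[Corollaries 8.4 and 8.5]{NG}, and the expectation is that the same combinatorics carries over verbatim once the Hasse-derivative identities are set up.
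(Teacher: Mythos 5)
Your first two paragraphs reduce the lemma correctly, and your instinct that the slack $s>r$ must produce an element of $\O_{\tilde L}$ close to $u'$ is the right one; but the step you then defer --- producing a monic $P\in\O_L[T]$ of degree $n$ with $|P(u')|<(s/p)^n$ --- is the entire content of the lemma, and the route you sketch for it does not go through. The expansion $P(\varpi)=\sum_j P^{[j]}([u'])\pi^j$ fails in $\WLE$ because Teichm\"uller lifts are not additive, so substituting $\varpi=[u']+\pi$ into $P$ does not produce Hasse-derivative coefficients; moreover the coefficients $y_j$ of a stable presentation of your witness $y$ lie in $W(\O_{L'})_E$ rather than in $\WLE$, so a polynomial ``built from their leading Teichm\"uller components'' need not have coefficients in $\O_L$ at all. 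The degree calibration you flag as ``the main obstacle'' is therefore not a loose end but the actual difficulty, and ``the expectation is that the same combinatorics carries over verbatim'' leaves it unproved. (A smaller point: the correct closeness condition coming from the analogue of \cite[Lemma 8.3]{NG} is $H(u',0)(\varpi-[u])\le s/p$, which is not implied by $|u-u'|\le s/p$ for small $s$, since $\lambda([u']-[u])$ can exceed $|u'-u|$.)

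The polynomial is also an unnecessary intermediary. Since $s>r$ there is $y\in\WLE$ with $H(u',s)(y)>H(u',0)(y)$; such a $y$ cannot be stable (a stable element is its own stable presentation, so Theorem \ref{hmult} would give $H(u',\rho)(y)=\lambda(y)$ for all $\rho$), so Theorem \ref{Wfactor} applied over the algebraically closed field $\tilde L$ factors it as $y=z(\varpi-[w_1])\cdots(\varpi-[w_n])$ with $z$ stable and $w_i\in\O_{\tilde L}$, $|w_i|\le p^{-1}$. Multiplicativity together with Corollary \ref{hcalc} gives
$$H(u',s)(y)=\lambda(z)\prod_i\max\{s/p,\ H(u',0)(\varpi-[w_i])\},\qquad H(u',0)(y)=\lambda(z)\prod_i H(u',0)(\varpi-[w_i]),$$
so the strict inequality forces $H(u',0)(\varpi-[w_i])<s/p$ for some $i$, and the analogue of \cite[Lemma 8.3]{NG} then yields $H(v,s)=\beta_{w_i,s}$. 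This is essentially the paper's argument run forwards (the paper phrases it contrapositively, via the infimum $t$ of the radii for which the conclusion holds): Theorem \ref{Wfactor} plays the role of your polynomial, and the ``roots'' land in $\O_{\tilde L}$ automatically because the factorization is performed over $\tilde L$ rather than over $\overline{L'}$.
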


\begin{proof}
The full proof is given in \cite[Lemma 8.10]{NG}, we will give a sketch. The set of $s$ such that $H(v,s)=\beta_{u,s}$ for some $u\in\O_{\tilde{L}}$ is up-closed and nonempty. Let $t$ be its infimum, we will check that  $t\leq r$.

By Lemma \ref{vzerorestrict}, we can expand $L$ to some $L'$ and find $w\in\mathfrak{m}_{L'}\setminus\{0\}$ such that $v$ is the restriction of $H(w,0)$ to $\WLE$. Taking an algebraic closure $\overline{L'}$ lets us identify $\O_{\overline{L}}$ with a subring of $\O_{\overline{L'}}$, so we can compare $\beta_{w, s}$ with any $\beta_{u,s}\in\mathcal{M}(W(\O_{\tilde{L}}))$. If $s<t$ then these valuations must be distinct, so $s/p<\beta_{w,0}(p-[u])$.

By the computation in Corollary \ref{hcalc}, $$\beta_{w,s}(p-[u])=\text{max}\{s/p,\beta_{w,0}(p-[u])\}=\beta_{w,0}(p-[u])$$ when $s\leq t$, so for these elements the valuation doesn't depend on $s$. But because $\overline{L'}$ is algebraically closed, by Theorem \ref{Wfactor} we can factor every element of $\WLE$ into the product of a stable element and finitely many $p-[u_i]$ . As all of the terms of this product are independent of $s$, we conclude that $\beta_{w,s}=\beta_{w,0}=v$ for all $s\in [0,t]$ and so $t\leq r$ as desired.
\end{proof}

\begin{lemma}\label{balldef}
For $u\in\O_{\tilde{L}}$ with $|u|\leq p^{-1}$ and $r\in [0,1]$, let $D(u,r)$ be the set of $\beta_{v,0}$ dominated by $\beta_{u,r}$. Then for $r,s\in [0,1]$, $D(u,r)=D(u,s)$ if and only if $r=s$.
\end{lemma}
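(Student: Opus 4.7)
The plan is to give an explicit description of $D(u,r)$ as the set of $\beta_{v,0}$ whose ``distance'' $d(u,v) := \tilde\beta_{v,0}(\varpi-[u])$ to $u$ is at most $r/p$, and then to use the density of the value group of $\tilde L$ to find a $v$ whose distance separates $r/p$ from $s/p$ when $r < s$.

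For the characterization $\beta_{v,0} \in D(u,r) \iff d(u,v) \leq r/p$, I would argue as follows. The sufficiency follows from the criterion $\tilde\beta_{u,r} = \tilde\beta_{v,r}$ (from \cite[Lemma 8.3]{NG}, recalled in the text before Lemma \ref{bigrad}) combined with the monotonicity $\tilde\beta_{v,0} \leq \tilde\beta_{v,r}$ (immediate from the stable-presentation formula of Theorem \ref{hmult}); restricting to $\WLE$ yields $\beta_{v,0} \leq \beta_{v,r} = \beta_{u,r}$. For the necessity, I would test domination against a suitable element of $\WLE$: when $u \in \O_L$, the element $x = \varpi - [u]$ works directly, since $\beta_{u,r}(x) = r/p$ by Corollary \ref{hcalc} and Remark \ref{vzero} while $\beta_{v,0}(x) = d(u,v)$; for general $u \in \O_{\tilde L}$, first use \cite[Corollaries 8.4--8.5]{NG} to replace $u$ by a nearby integral element and then test with the Galois-invariant product $\prod_\sigma (\varpi - [\sigma u]) \in \WLE$.

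The key technical input is the identification $d(u,v) = |u-v|$, which I would establish by applying the stabilization procedure of Lemma \ref{stablemma} to $\varpi - [v] \equiv [u]-[v] \pmod{\pi}$ and using Lemma \ref{witthomog} on the homogeneity of Witt vector arithmetic to verify that the leading Teichmuller coefficient of the resulting stable representative has $|\cdot|$-norm exactly $|u-v|$; Theorem \ref{hmult} then gives $H(u,0)(\varpi-[v]) = |u-v|$. Once this is in hand, the lemma follows: for $r < s$, since $|\tilde L^{\times}|$ is divisible and dense in $\R_{>0}$ (as $\tilde L$ is a completed algebraic closure of $L$), pick $c \in (r/p,\, s/p] \cap |\tilde L^{\times}|$ and $w \in \tilde L$ with $|w| = c$, and set $v = u+w \in \O_{\tilde L}$; then $|v| \leq \max(|u|,|w|) \leq p^{-1}$ and $d(u,v) = c$, so $\beta_{v,0}$ lies in $D(u,s) \setminus D(u,r)$, contradicting $D(u,r)=D(u,s)$.

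The hard part will be the formula $d(u,v) = |u-v|$: although morally transparent via the analogy with $K[T]$, it is not immediate because $[u]-[v]$ itself need not be stable when $|u-v|$ is small relative to $\max(|u|,|v|)$, forcing one to iterate Lemma \ref{stablemma} and to track how the leading Teichmuller coefficient behaves under successive $\pi$-adic corrections.
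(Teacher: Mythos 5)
The paper gives no argument of its own here (it defers entirely to \cite[Lemma 8.16]{NG}), and your overall strategy --- describe $D(u,r)$ as the set of $\beta_{v,0}$ with $d(u,v):=\tilde\beta_{v,0}(\varpi-[u])\le r/p$, then for $r<s$ exhibit a $v$ with $d(u,v)\in(r/p,s/p]$ --- is the right shape, and the sufficiency/necessity analysis in your first half is essentially sound. The gap is exactly in the step you flag as the hard part: the identity $d(u,v)=|u-v|$ is not just delicate, it is false, and consequently $v=u+w$ need not satisfy $d(u,v)>r/p$. The point is that $d(u,v)$ is the norm of $\theta_v(\varpi-[u])=\theta_v(\varpi)-u^{\sharp}$ computed in the untilt $\WLE/(\varpi-[v])$, where $a\mapsto a^{\sharp}:=\theta_v([a])$ is multiplicative but not additive; so $d(u,v)=|u^{\sharp}-v^{\sharp}|$, which can be strictly smaller than $|u-v|_L$. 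Concretely, take $E=\Qp$, let $C=\widehat{\Qp(\mu_{p^\infty})}$ with tilt $L$, let $v\in\O_L$ be the element corresponding to the compatible system $(p,p^{1/p},\dots)$ and $\epsilon=(1,\zeta_p,\zeta_{p^2},\dots)\in\O_L$, and set $u=v\epsilon$. Then $|u|=|v|=p^{-1}$ and $|u-v|=p^{-1}|\epsilon-1|>0$, but $\theta_v([u])=\theta_v([v])\,\epsilon^{\sharp}=p\cdot 1=p$, so $p-[u]$ lies in $(p-[v])$ and $d(u,v)=0$. Thus $\beta_{u,0}=\beta_{v,0}$ although $u\ne v$: writing $v=u+w$, this is a $w$ of nonzero norm for which your construction returns a point already lying in $D(u,0)$. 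The cancellation you worried about when stabilizing $[u]-[v]$ genuinely occurs, so asserting that the leading coefficient has norm exactly $|u-v|$ cannot be salvaged.

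The repair is to perturb on the untilt side rather than in $L$. Using the symmetry $d(u,v)=d(v,u)$ (established at the start of \cite[\S 8]{NG}), one has $d(u,v)=|\theta_u(\varpi)-\theta_u([v])|$ inside the single residue field $\mathcal{H}(\beta_{u,0})$. Since $\tilde L$ is algebraically closed, this untilt is too, and $a\mapsto\theta_u([a])$ is a norm-preserving multiplicative bijection from $\O_{\tilde L}$ onto its ring of integers. So choose $c\in(r/p,s/p]$ in the (divisible, hence dense) value group $|\tilde L^{\times}|$, choose $z$ in the untilt with $|z|=c$, and let $v\in\O_{\tilde L}$ be the unique element with $\theta_u([v])=\theta_u(\varpi)-z$; then $|v|\le p^{-1}$ and $d(u,v)=|z|=c$ exactly, which completes the separation step. (Two smaller points: your necessity argument for the characterization of $D(u,r)$ when $u\notin\O_L$ tacitly needs the ultrametric inequality for $d$ across the conjugates $\sigma u$, which again is cleanest when all distances are computed in one untilt; and the symmetry of $d$, which you use implicitly when quoting \cite[Lemma 8.3]{NG}, deserves an explicit citation.)
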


\begin{proof}
\cite[Lemma 8.16]{NG}
\end{proof}

\begin{theorem}\label{berkclass}
Each element of $\mathcal{M}(\WLE)$ is of exactly one of the following four types.

\begin{enumerate}
\item A point of the form $\beta_{u,0}$ for some $u\in\O_{\tilde{L}}$ with $|u|\leq p^{-1}$. Such a point has radius $0$.
\item A point of the form $\beta_{u,r}$ for some $u\in\O_{\tilde{L}}$ with $|u|\leq p^{-1}$ and some $r\in (0,1]$ such that $t/p$ is the norm of an element of $\O_{\tilde{L}}$. Such a point has radius r.
\item A point of the form $\beta_{u,r}$ for some $u\in\O_{\tilde{L}}$ with $|u|\leq p^{-1}$ and some $r\in (0,1)$ such that $t/p$ is the not norm of an element of $\O_{\tilde{L}}$. Such a point has radius r.
\item The infimum of a sequence $\beta_{u_i,r_i}$ for which the sequence $D(u_i,r_i)$ is decreasing with empty intersection. Such a point has radius $\displaystyle\inf_i{r_i}>0$.
\end{enumerate}

\end{theorem}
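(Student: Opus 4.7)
The plan is to mirror \cite[Theorem 8.17]{NG}, using the preceding lemmas to transport the $W(R)$-argument to $\WLE$. Given $v\in\mathcal{M}(\WLE)$ of radius $r$, I would first apply Lemma~\ref{bigrad} to produce, for each $s\in(r,1]$, some $u_s\in\O_{\tilde L}$ with $|u_s|\leq p^{-1}$ satisfying $H(v,s)=\beta_{u_s,s}$. Feeding $s<s'$ into the identity $H(H(v,s),s')=H(v,s')$ of Lemma~\ref{lemma:retract} gives $\beta_{u_s,s}\leq\beta_{u_{s'},s'}$, hence $D(u_s,s)\subseteq D(u_{s'},s')$; continuity of $H$ in the radius identifies $v=\lim_{s\to r^+}\beta_{u_s,s}$, so the classification reduces to analyzing this limit.

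If $r=0$, Lemma~\ref{bigrad} provides no information at the endpoint, and instead I would start from Lemma~\ref{vzerorestrict} to write $v=H(u',0)|_{\WLE}$ for some $u'$ in a perfect overfield. Porting the center-replacement arguments of \cite[Lemmas 8.3--8.5]{NG}, whose ingredients (Lemma~\ref{witthomog}, Corollary~\ref{hcalc}, Theorem~\ref{Wfactor}, and Remark~\ref{vzero}) all have the required analogues in our setting, lets me replace $u'$ by an element of $\O_{\tilde L}$, realizing $v=\beta_{u,0}$ as in Type~(1).

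If $r>0$, I would split on whether $\bigcap_{s>r}D(u_s,s)$ is empty. In the nonempty case, pick $\beta_{u,0}$ in the intersection. The stable presentation of $\varpi-[u_s]$ with respect to itself computes $\beta_{u_s,s}(\varpi-[u_s])=s/p$, so $\beta_{u,0}(\varpi-[u_s])\leq s/p$; using the symmetry $\beta_{u,0}(\varpi-[u_s])=\beta_{u_s,0}(\varpi-[u])$ (both encode the distance $|u-u_s|$ via Lemma~\ref{witthomog}), we deduce $\beta_{u_s,0}(\varpi-[u])\leq s/p$, and the analogue of \cite[Lemma 8.3]{NG} then forces $\beta_{u,s}=\beta_{u_s,s}$. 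Sending $s\to r^+$ yields $v=\beta_{u,r}$, which is Type~(2) or~(3) according as $r/p$ is or is not the norm of an element of $\O_{\tilde L}$. In the empty case, extracting a cofinal sequence $s_i\to r^+$ exhibits $v$ as in Type~(4), with radius $\inf s_i=r>0$.

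For mutual exclusivity, Lemma~\ref{balldef} recovers the radius $r$ from the disc $D(u,r)$, the arithmetic condition on $r/p$ separates Types~(2) and~(3), and a Type~(4) point cannot coincide with any $\beta_{u,r}$ because otherwise $\beta_{u,0}$ would sit in every $D(u_{s_i},s_i)$ and contradict the empty-intersection hypothesis. The main obstacle is the $r=0$ case: since Lemma~\ref{bigrad} is silent at the endpoint, one must port the full center-replacement toolkit of \cite[\S 8]{NG} into $\WLE$, whereas the $r>0$ cases are then a fairly formal nested-disc argument built out of Lemmas~\ref{bigrad}, \ref{lemma:retract}, and~\ref{balldef}.
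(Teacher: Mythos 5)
Your argument for $r>0$ (produce centers $u_s\in\O_{\tilde L}$ for $s>r$ via Lemma~\ref{bigrad}, nest the discs using Lemma~\ref{lemma:retract}, and split on whether $\bigcap_{s>r}D(u_s,s)$ is empty) is exactly the paper's argument, and your mutual-exclusivity discussion also matches. The gap is concentrated in two places that are really the same missing ingredient. First, in the empty-intersection case you assert that the radius satisfies $\inf s_i=r>0$ without justification; this is precisely the nontrivial claim attached to Type~(4) in the statement. Second, your separate treatment of $r=0$ via ``porting the center-replacement arguments of \cite[Lemmas 8.3--8.5]{NG}'' does not work: those lemmas let you replace the center $u$ of $\tilde\beta_{u,r}$ by any $u'$ with $r/p\geq\tilde\beta_{u',0}(\varpi-[u])$, so at $r=0$ there is no room to move the center at all, and the element $u'$ produced by Lemma~\ref{vzerorestrict} lives in a possibly enormous overfield $L'$ with no a priori reason to descend to $\O_{\tilde L}$.

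Both issues are settled by the one fact your write-up never invokes: completeness of $\tilde L$. If $D(u_i,t_i)$ is decreasing with $t_i\to 0$, then $\beta_{u_j,0}\in D(u_i,t_i)$ for $j>i$ forces $\tilde\beta_{u_i,0}(\varpi-[u_j])\leq t_i/p$, so (via Lemma~\ref{witthomog}, which relates this quantity to $|u_i-u_j|$) the centers form a Cauchy sequence; its limit $u\in\O_{\tilde L}$ gives $\beta_{u,0}\in\bigcap_i D(u_i,t_i)$. Hence a decreasing sequence of discs with empty intersection must have $\inf_i t_i>0$, which simultaneously proves the positivity of the radius of a Type~(4) point and shows that a radius-$0$ valuation falls into the nonempty-intersection branch of your own dichotomy, yielding $v=\beta_{u,0}$ directly. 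With that observation inserted, your $r=0$ case needs no separate treatment and the proof coincides with the paper's.
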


\begin{proof}
This is \cite[Lemma 8.17]{NG}, we again give a sketch. Types (i), (ii), (iii) are distinct as they have different radii, and $\beta_{u,r}$ cannot be type (iv) because $\beta_{u,0}$ would be in each $D(u_i,t_i)$. So the four types of points are distinct and we must check that any $\beta$ not of the form $\beta_{u,s}$ is type (iv).

Let $r$ be the radius of $\beta$, choose a sequence $1\geq t_1>t_2>\cdots$ with infimum $r$. Then by \ref{bigrad} we have $H(\beta,t_i)=\beta_{u_i,t_i}$ for some $u_i\in\O_{\tilde{L}}$. Then $\beta_{u_1,t_1},\beta_{u_2,t_2},\dots$ is decreasing with infimum $\beta$, so the sequence $D(u_i,t_i)$ is also decreasing. Any $u$ in the intersection would allow us to write $\beta=\beta_{u,r}$, so the $D(u_i,t_i)$ must have empty intersection. The radius must be nonzero because any decreasing sequence of balls with empty intersection must have radii bounded below by a nonzero number. 
\end{proof}

\section{The points of the adic spectrum}

We will now add in a classification of the higher rank valuations, giving a complete description of the adic spectrum.  To move to $\Spa(W(\O_L)_E,\WLE^\circ)$, we must also consider higher rank valuations. In the lecture notes \cite[Lecture 11]{Con}, Conrad gives an explicit description of the points of the adic unit disk $\Spa(k\langle t \rangle, k^\circ \langle t\rangle)$ over a non-archimedean field $k$. In this section, we adapt this proof to show that all the higher rank points are the natural equivalent of the type $5$ points of the adic unit disk. 

Define the abelian group $\Gamma:=\R_{>0}\times \Z$ with the lexicographical order, the group action given by $(t_1,m_1)(t_2,m_2)=(t_1t_2,m_1+m_2)$. We define $1^-:=(1,-1)$, $1^+:=(1,1)=1/1^-$, $r^-=r1^-$, and $r^+=r1^+$. Intuitively, $1^-$ is infinitesimally less than 1.

\begin{definition}\label{type5}
Given $u\in\O_{\tilde{L}}$ with $|u|\leq p^{-1}$ and $r\in (0,1]$, we define $\tilde{\beta}_{u,r^+}: W(\O_{\tilde{L}})_E\ra\Gamma\cup \{0\}$ by $$\tilde{\beta}_{u,r^+}(x)=\displaystyle\max_i\big\{(r^+/p)^i|x_i|\}$$ for any stable presentation $x_0,x_1,\dots$ of $x$ with respect to $u$. We define $\beta_{u,r^+}$ to be the restriction of $\tilde{\beta}_{u,r^+}$ to $\WLE$ and we define $\tilde{\beta}_{u,r^-}$ and $\beta_{u,r^-}$ analogously. We call these the type $5$ valuations.
\end{definition}

We remark that this definition doesn't depend on the choice of stable presentation by the argument of \cite[Theorem 5.11]{NG}, this was already used in Theorem \ref{hmult}. One can check that these valuations are continuous. If $r/p$ isn't the norm of an element of $W(\O_{\tilde{L}})_E$ then $\beta_{x,r^+}=\beta_{x,r^-}=\beta_{x,r}$. We will therefore assume from now on that $|r/p|\in |W(\O_{\tilde{L}})_E^\times|=|\O_L^\times|$.

\begin{remark}\label{5uniquemax}
Unlike the rank one case, the maximum in the definition of $\tilde{\beta}_{u,r^+}$ and $\tilde{\beta}_{u,r^-}$ is attained by a unique element. This is clear as any two terms have different powers of $1^+$.
\end{remark}

\begin{theorem}\label{only5}
All of the points of $\Spa(\WLE,\WLE^\circ)\setminus \mathcal{M}(\WLE)$ are of type 5.
\end{theorem}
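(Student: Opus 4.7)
The plan is to adapt Conrad's classification for the adic unit disk \cite[Lecture 11]{Con}, using Theorem \ref{berkclass} as the rank-$1$ input together with standard rank-reduction for valuations. Given $v \in \Spa(\WLE, \WLE^\circ) \setminus \mathcal{M}(\WLE)$, the value group $\Gamma_v$ of $v$ has rank at least $2$ and hence admits a largest proper convex subgroup $H$. Composing $v$ with the quotient $\Gamma_v \twoheadrightarrow \Gamma_v/H$ produces a continuous rank-$1$ valuation $\bar v \in \mathcal{M}(\WLE)$, which is then classified by Theorem \ref{berkclass}. The claim to prove is that $\bar v$ must be of type (ii) — i.e., $\bar v = \beta_{u, r}$ for some $u \in \O_{\tilde{L}}$ with $|u| \leq p^{-1}$ and some $r \in (0,1]$ with $r/p \in |\O_{\tilde{L}}^\times|$ — and that $v$ is correspondingly $\beta_{u, r^+}$ or $\beta_{u, r^-}$.

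For the main construction, I would identify the center $u$ by factoring elements over an algebraically closed $\tilde{L}$ via Theorem \ref{Wfactor}: any $x \in \WLE$ on which $v$ and $\bar v$ differ infinitesimally (i.e., $v(x)/\bar v(x)$ lies in a nontrivial coset of $H$) must have this refinement carried by one of its primitive linear factors $\varpi - [u_i]$, and the relevant $u_i$ serves as $u$. With $\pi = \varpi - [u]$, we have $v(\pi) = (r/p)h$ for some $h \in H \setminus \{1\}$; the sign of $h$ in the ordered group $H$ determines whether $v = \beta_{u, r^+}$ or $\beta_{u, r^-}$. To verify, take any $x \in \WLE$ with its stable presentation $x = \sum_{i\geq 0} x_i \pi^i$ from Lemma \ref{stabexist}: the argument of Theorem \ref{hmult}, combined with the uniqueness in Remark \ref{5uniquemax}, computes $v(x) = \max_i v(\pi)^i \lambda(x_i)$, matching Definition \ref{type5} once $H$ is identified with the $1^+$ or $1^-$ direction of $\Gamma$.

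The main obstacle is ruling out the other Berkovich types for $\bar v$. For type (i), where $\bar v = \beta_{u, 0}$, Remark \ref{vzero} shows this corresponds to the prime $(\pi)$, so any refinement $v$ must have $v(\pi) \in H \setminus \{0\}$; a stable-presentation calculation then shows that $v$ would take arbitrarily large values (infinitesimally) on $\WLE^\circ$ through high powers of $\pi^{-1}$-related elements, contradicting the adic-continuity condition $v(\WLE^\circ) \leq 1$. For type (iii), the remark after Definition \ref{type5} shows that $\beta_{u, r^\pm} = \beta_{u, r}$ collapse to the rank-$1$ point when $r/p \notin |\O_{\tilde{L}}^\times|$, and a direct argument from there rules out any genuine higher-rank refinement at such a center. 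The hardest case is type (iv): here $\bar v = \inf_i \beta_{u_i, r_i}$ with $\bigcap_i D(u_i, r_i) = \emptyset$, and a higher-rank refinement would, via the factorization procedure above, select a center $u$ for which $\beta_{u, 0}$ lies in every $D(u_i, r_i)$, contradicting emptiness. Making this intuition rigorous requires carefully controlling $v(\varpi - [u_i])$ as $i \to \infty$ using continuity of $v$ together with Lemma \ref{balldef}, and is where the bulk of the technical work is expected to lie.
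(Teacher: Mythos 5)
Your core construction is the paper's: assume $L$ algebraically closed (restricting valuations handles the general case), use Theorem \ref{Wfactor} to locate a factor $\pi=\varpi-[u]$ with $v(\pi)$ outside the rank-one part of the value group, and compute $v$ on stable presentations with respect to $\pi$. Two points, though. First, the identity $v(x)=\max_i v(\pi)^i\lambda(x_i)$ for an \emph{unknown} higher-rank $v$ is the one step that genuinely has to be proved, and Remark \ref{5uniquemax} cannot supply it: that remark concerns the already-defined valuations $\tilde\beta_{u,r^\pm}$, whereas here you must show no cancellation occurs in $\sum_i x_i\pi^i$ for your abstract $v$. The paper's justification is that $|\O_L^\times|$ is divisible since $L$ is algebraically closed, so $v(\pi)^m\notin|\O_L^\times|$ for every $m\neq 0$ and the values $v(x_i\pi^i)$ are pairwise distinct, forcing equality in the ultrametric inequality. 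Second, your detour through the maximal generization $\bar v$ and the four Berkovich types manufactures work that the direct argument avoids. Once the displayed formula holds, applying the order-preserving quotient $\Gamma_v\to\Gamma_v/H$ to it shows at once that $\bar v$ is a \emph{centered} point $H(u,\cdot)$, so types (i) and (iv) simply cannot occur as generizations of higher-rank points; in particular the type (iv) case, which you flag as ``where the bulk of the technical work is expected to lie,'' dissolves entirely. Type (iii) is excluded because if $\bar v(\pi)\notin|\O_L^\times|$ then the value group of $v$, generated by $|\O_L^\times|$ and $v(\pi)$, admits an order-preserving embedding into $\R_{>0}$, contradicting rank $\geq 2$; this is exactly the paper's parenthetical remark that $v(\pi)$ must be infinitesimally close to some $r\in|\O_L^\times|$. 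Finally, a small slip in your type (i) discussion: if $\bar v=\beta_{u,0}$ then $v(\pi)=0$ exactly, not an element of $H\setminus\{0\}$, since the quotient by $H$ kills no nonzero values; $v$ would then factor through the residue field at $\beta_{u,0}$, where no higher-rank continuous point exists.
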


\begin{proof}
The argument is essentially that of \cite[Theorem 11.3.13]{Con}, we give a sketch pointing out the differences. We assume that $L$ is algebraically closed; as before the general case follows by restricting valuations from the algebraic closure. Let $v$ be a valuation of $\WLE$ with rank greater than 1. Then there is some $x'\in \WLE$ with $v(x')\not\in |\O_L^\times|$. As $L$ is algebraically closed, we can use Theorem \ref{Wfactor} to factor $x'=y(p-[u_1])\cdots(p-[u_n])$ for $y\in \WLE$ stable and $u_i\in\O_L$ with $|u_i|\leq p^{-1}$. We have $v(y)\in |\O_L^\times|$, so we must have some $u_i$ with $v(p-[u_i])\not\in |\O_L^\times|$. For simplicity we will call this $u$ and define $\pi=p-[u],\ \gamma=v(p-[u_i])$. We will show that $u$ must act as our center.

As $L$ is algebraically closed, $|\O_L^\times|$ is divisible, so $\gamma^m\not\in |\O_L^\times|$ for any non-zero integer $m$. Then given any $x\in\WLE$, if we construct a stable presentation $x=\sum_{i=0}^\infty x_i\pi^i$ we have $$v(x)=\displaystyle\max_i v(x_i\pi^i)=\displaystyle\max_i \gamma^i|x_i|$$ as the valuations of the terms are pairwise distinct. The rest of the proof now follows exactly as in \cite{Con}. One checks that $\gamma$ must be infinitesimally close to some $r\in |\O_L^\times|$ (if this weren't the case we could construct an order-preserving homomorphism from $\Gamma\ra \R_{>0}$, contradicting that $\Gamma$ is higher rank). Then either $v=\beta_{u,r^+}$ or $\beta_{u,r^{-}}$ depending on if $\gamma>r$ or $\gamma<r$ in $\Gamma$.
\end{proof}

\section{The extended Robba rings}

We now define the rings used in the construction of the Fargues Fontaine curve and extend the classification of \ref{berkclass} and \ref{only5} to them. 

\begin{definition}\label{ABdef} Following \cite[Definition 2.2]{NP}, we define $$A_{L,E}=W(\O_L)_E[[\overline{x}]:\overline{x}\in L], \quad B_{L,E}=A_{L,E}\otimes_{\O_E} E.$$ 
\end{definition}

Each element of $A_{L,E}$ (resp. $B_{L,E}$) can be written uniquely in the form $\sum_{i\in\Z}\varpi^i[\overline{x}_i]$ for some $\overline{x}_i\in L$ which are zero for $i<0$ (resp. for $i$ sufficiently small) and bounded for $i$ large. The valuations $H(0,r)$ for $r\in(0,1]$ (defined in \ref{hdef}) therefore extend naturally to these rings, allowing us to make the following definition.

\begin{definition}\label{ABrdef}
Let $A^r_{L,E}$ be the completion of $A_{L,E}$ with respect to $H(0,r)$ and define $B^r_{L,E}$ analogously. Given a closed subinterval $I=[s,r]$ of $(0,\infty)$, let $\lambda_I=\displaystyle\max\{H(0,s),H(0,r)\}$, this is a power multiplicative norm as the $H(0,r)$ are multiplicative. Let $\Bi$ be the completion of $B_{L,E}$ with respect to $\lambda_I$.
\end{definition}

\begin{remark}\label{ABrrem}
In \cite{FF} the rings $B^r_{L,E}$ are called $B^b$ and the rings $B^I_{L,E}$ are called $B_I$. In \cite{NP} the Gauss norms used to obtain these rings are written in a different form but are equivalent.
\end{remark}

\begin{prop}\label{ABclass}
The points of adic spectra of all the rings defined in \ref{ABdef} and \ref{ABrdef} can be classified into types $1$-$5$ as in \ref{berkclass} and \ref{only5}.
\end{prop}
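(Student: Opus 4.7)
The plan is to propagate the classification of Theorems \ref{berkclass} and \ref{only5} through the chain of rings in Definitions \ref{ABdef} and \ref{ABrdef}, using the fact that each arises from $\WLE$ via either a localization or a completion.

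First I would handle the localizations $A_{L,E}$ and $B_{L,E}$: the former is obtained from $\WLE$ by inverting the Teichm\"uller lifts $[\overline{x}]$ for $\overline{x}\in L^\times$, and the latter by additionally inverting $\varpi$. Accordingly $\Spa(A_{L,E}, A_{L,E}^\circ)$ and $\Spa(B_{L,E}, B_{L,E}^\circ)$ are the rational open subspaces of $\Spa(\WLE, \WLE^\circ)$ where these inverted elements have nonzero valuation. Using the one-term stable presentation $[\overline{x}]=[\overline{x}]$ combined with Theorem \ref{hmult} (and its type-$5$ analogue via Definition \ref{type5}), every type 1--5 valuation satisfies $\beta_{u,r^\bullet}([\overline{x}])=|\overline{x}|$, which is nonzero whenever $\overline{x}\neq 0$, so the full classification transfers to $\Spa(A_{L,E})$. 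The stable presentation $\varpi=[u]+\pi$ yields $\beta_{u,r^\bullet}(\varpi)=\max\{|u|,r^\bullet/p\}$, which vanishes only at $\beta_{0,0}$; thus $\Spa(B_{L,E})$ inherits the classification with this one point removed.

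Next I would treat the completions $A^r_{L,E}, B^r_{L,E}, B^I_{L,E}$. In general, the adic spectrum of a completion of a Huber ring is identified with the subspace of continuous valuations on the original ring, so it suffices to determine which type 1--5 valuations are continuous with respect to the relevant norm ($H(0,r)$ or $\lambda_I$). For rank-one valuations, continuity against a power-multiplicative norm $\alpha$ reduces (by raising to $n$th powers and taking limits) to the clean bound $v\leq\alpha$. I would then use stable presentations, together with the factorization in Theorem \ref{Wfactor}, to determine precisely which $\beta_{u,s^\bullet}$ satisfy $\beta_{u,s^\bullet}\leq H(0,r)$ (respectively $\beta_{u,s^\bullet}\leq\lambda_I$); the answer will be a simple set of inequalities on $(u,s^\bullet)$, cutting out the subfamily of type 1--5 points that comprises $\Spa$ of the completed ring.

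The main obstacle I anticipate is the treatment of higher-rank (type 5) valuations under completion. The rank-one case is controlled cleanly by power-multiplicativity, but a type 5 valuation with an infinitesimal $1^\pm$ factor requires more care to confirm that it extends, and extends uniquely, to the completion, since continuity of a higher-rank valuation with respect to a topology is not captured by a single scalar inequality. I expect to reduce this to the rank-one case via Remark \ref{5uniquemax}: because the maximum defining $\tilde\beta_{u,r^\pm}$ is attained uniquely on any stable presentation, one can check continuity term-by-term in the presentation and so reduce the type 5 analysis to the (already established) type 1--3 analysis. The remaining bookkeeping---identifying, for each completed ring, the precise admissible range of parameters $(u,s^\bullet)$---is routine but not entirely automatic.
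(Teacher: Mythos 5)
Your proposal is correct and follows essentially the same route as the paper: reduce everything to the classification for $\WLE$ by observing that a (multiplicative) valuation on the localizations $A_{L,E}$ and $B_{L,E}$ is determined by its restriction to $\WLE$, and that passing to the completions $A^r_{L,E}$, $B^r_{L,E}$, $\Bi$ adds no points to the adic spectrum. The paper's proof stops at this reduction, while you additionally work out which type $1$--$5$ points survive each localization and completion (e.g.\ removing $\beta_{0,0}$ when $\varpi$ is inverted, and the continuity analysis for type $5$ points); this is more than the statement requires but is consistent with it.
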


\begin{proof}
Given a valuation $v\in\Spa(A_{L,E},A^\circ_{L,E})$ (resp. $\Spa(B_{L,E}, B^\circ_{L,E})$) and an element $x=\sum_{i\in\Z}\varpi^i[\overline{x}_i]$ in $A_{L,E}$ or $B_{L,E}$, there exists some $\overline{y}\in \O_L$ and $k\geq 0$ such that $\varpi^k x[\overline{y^{-1}}]\in \WLE$. As $v$ is by definition multiplicative, we have $v(x)=v(\varpi^k x[\overline{y^{-1}}])v([\overline{y}])v(\varpi)^{-k}$, so $v(x)$ is uniquely determined by the restriction of $v$ to $\WLE$. Taking completions won't add points to the adic spectrum, so the desired result also follows for $A^r_{L,E},\ B^r_{L,E},$ and $\Bi$.
\end{proof}

\section{Rational Localizations}

We can now use this explicit classification to determine some properties of $\Spa(\Bi,\Bic)$ that can be checked on $\mathcal{M}(\Bi)$. We first set some notation. Let $(\Bi, \Bic)\ra (C,C^+)$ be a rational localization, so there exist elements $f_1,\dots,f_n,g\in B_{L,E}$ generating the unit ideal in $\Bi$ such that $$Spa(C,C^+)=\{ v\in\Spa(\Bi,\Bic):v(f_i)\leq v(g)\neq 0\quad (i=1,\dots,n)\}.$$ By \cite[Lemma 2.4.13a]{KL} we have $$C=\Bi\{T_1/\rho_1,\dots,T_n/\rho_n\}/(gT_1-f_1,\dots,gT_n-f_n)$$ (as $\Bi$ is strongly noetherian \cite[Theorem 4.10]{NP}, this ideal is already closed so we don't need to take the closure) and by definition $$C^+=\{x\in C: v(x)\leq 1\quad (v\in\Spa(C,C^+))\}.$$ 

\begin{remark}\label{LocGens}
By \cite[Remark 2.4.7]{KL}, we can choose the defining elements $f_1,\dots,f_n,g$ of our rational localization to be elements of $B_{L,E}$. This is convenient as it can be difficult to work with general elements of $\Bi$ - they aren't necessarily all of the form $\sum_{i\in\Z}\varpi^i[\overline{x}_i]$.
\end{remark}

We start with a useful computation, showing that inequalities coming from a type-$5$ valuation continue to be true near that valuation. 

\begin{lemma}\label{2v5}
Given elements $x$ and $y$ in $B_{L,E}$ and a type-$5$ valuation $\beta_{u,r^+}$ in $\Spa(\Bi,\Bip)$, if $\beta_{u,r^+}(x)\leq \beta_{u,r^+}(y)$ then there exists some real number $s>r$ such that for every $r'\in (r,s)$, $\beta_{u,r'}(x)\leq \beta_{u,r'}(y)$. If we instead choose $\beta_{u,r^-}$ such that $\beta_{u,r^-}(x)\leq \beta_{u,r^-}(y)$, then there exists some real number $s<r$ such that for every $r' \in (s,r)$, $\beta_{u,r'}(x)\leq \beta_{u,r'}(y)$.
\end{lemma}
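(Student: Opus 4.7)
The plan is to expand $x$ and $y$ in stable presentations with respect to $\pi = \varpi - [u]$, observe that $\beta_{u,r^+}$ and $\beta_{u,r'}$ (for $r'$ just above $r$) are each controlled by a single dominant term of the presentation, and conclude by a monomial comparison.

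First I would reduce to the case $x, y \in \WLE$: each element of $B_{L,E}$ has the form $\sum_{i \geq -N}\varpi^i[\overline{x}_i]$ with bounded coefficients, so after multiplying both $x$ and $y$ by a common factor $\varpi^N[\overline{z}]$ with $|\overline{z}|$ sufficiently small, the results lie in $\WLE$, and the inequalities to be checked are preserved because $\beta_{u,r^+}$ and $\beta_{u,r'}$ are multiplicative. Using Lemma \ref{stabexist}, fix stable presentations $x = \sum_i x_i\pi^i$ and $y = \sum_i y_i\pi^i$. By Theorem \ref{hmult} and Definition \ref{type5},
$$\beta_{u,r'}(x) = \max_i (r'/p)^i\lambda(x_i), \qquad \beta_{u,r^+}(x) = \max_i \bigl((r/p)^i\lambda(x_i),\,i\bigr)$$
in the lex order on $\Gamma$. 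Since $\lambda(x_i) \leq 1$ and $(r/p)^i \to 0$, the set $I_x := \{i : (r/p)^i\lambda(x_i) = \beta_{u,r}(x)\}$ is finite, and Remark \ref{5uniquemax} forces the unique $\beta_{u,r^+}$-maximizer to be $i_x := \max I_x$; define $i_y$ analogously.

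The key step is to show that for $t > r$ sufficiently close to $r$ one has $\beta_{u,t}(x) = (t/p)^{i_x}\lambda(x_{i_x})$ (and similarly for $y$). The indices $i \in I_x \setminus \{i_x\}$ are finite in number and all satisfy $i < i_x$; at $t = r$ they give equality, and for $t > r$ the factor $(t/r)^{i - i_x} < 1$ shows that $i_x$ strictly dominates them. For the infinite tail $i > i_x$, I would pick an auxiliary $t_0 \in (r, 1]$: since $(t_0/p)^i\lambda(x_i) \to 0$, only finitely many $i$ satisfy $(t_0/p)^i\lambda(x_i) \geq (r/p)^{i_x}\lambda(x_{i_x})$, and on the complement the bound $(t/p)^i\lambda(x_i) \leq (t_0/p)^i\lambda(x_i) < (r/p)^{i_x}\lambda(x_{i_x}) \leq (t/p)^{i_x}\lambda(x_{i_x})$ is immediate for $t \in (r, t_0]$. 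The finitely many remaining exceptional indices (this finite sub-tail together with the $i \notin I_x$ satisfying $i < i_x$) satisfy strict inequalities at $t = r$ that persist by continuity for $t$ near $r$.

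With this in hand the conclusion is a two-case check. If $(r/p)^{i_x}\lambda(x_{i_x}) < (r/p)^{i_y}\lambda(y_{i_y})$, continuity yields $\beta_{u,t}(x) < \beta_{u,t}(y)$ for $t$ close to $r$; otherwise the real parts agree, $i_x \leq i_y$, and $\beta_{u,t}(x)/\beta_{u,t}(y) = (t/r)^{i_x - i_y} \leq 1$ for $t > r$. The $r^-$ case is entirely parallel: the unique $\beta_{u,r^-}$-maximizers become $\min I_x$ and $\min I_y$, one considers $t < r$, and in this direction the infinite tail $i > \min I_x$ is controlled directly without needing an auxiliary parameter, while the finitely many $i < \min I_x$ are handled by continuity. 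The main obstacle is the uniform control of the infinite tail in the $r^+$ case, which the auxiliary parameter $t_0$ resolves.
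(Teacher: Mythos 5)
Your proposal is correct and follows essentially the same route as the paper: fix stable presentations with respect to $\varpi-[u]$, isolate the unique index realizing the type-$5$ maximum (Remark \ref{5uniquemax}), compare term by term for $r'$ near $r$, and control the infinite tail by the finiteness of the set of indices that could overtake the dominant term on a bounded interval of radii. The only cosmetic differences are that you first establish a single-monomial description of $\beta_{u,r'}(x)$ and $\beta_{u,r'}(y)$ separately before comparing, and that you make explicit the reduction from $B_{L,E}$ to $\WLE$, which the paper leaves implicit.
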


\begin{proof}
We will just prove the first statement, the other statement follows identically. We can assume that $L$ is algebraically closed as the other case follows by restricting the valuations. Let $\pi=\varpi-[u]$ and fix stable presentations $x_0,x_1,\dots$ and $y_0,y_1,\dots$ of $x$ and $y$, so $\beta_{u,r^+}(x)=\max_{i}\{(r^+/p)^i \lambda_I(x_i)\}$ and $\beta_{u,r^+}(y)=\max_{i}\{(r^+/p)^i \lambda_I(y_i)\}$. Let $j$ be the unique (by Remark \ref{5uniquemax}) index such that $(r^+/p)^j\lambda_I(y_j)=\beta_{u,r^+}(y)$. 

For any term $x_i$, we have $(r^+/p)^i\lambda_I(x_i)\leq (r^+/p)^j\lambda_I(y_j)$ by assumption. If $i\leq j$ then for all $r'>r$ we have $(r'/p)^i\lambda_I(x_i)\leq (r'/p)^j\lambda_I(y_j)$ so any choice of $s$ will retain the desired inequality in this case. If $i>j$, we must have a strict inequality 
$(r/p)^i\lambda_I(x_i)< (r/p)^j\lambda_I(y_j)$ as if this were an equality moving from $r$ to $r^+$ would increase the left side more than the right side. For any fixed $i$, there is some interval $(r,s_i)$ where this inequality remains strict. It is therefore enough to show that we only need to consider finitely many terms. But there are only finitely many $i$ such that $(1/p)^i\lambda_I(x_i)>(r/p)^j\lambda_I(y_j)$ and these are the only terms of our presentation of $x$ that could ever pass the leading term of $y$. 
\end{proof}

\begin{prop}\label{contop}
The rank one valuations $\mathcal{M}(\Bi)$ are dense in $\Spa(\Bi,\Bic)$ in the constructible topology.
\end{prop}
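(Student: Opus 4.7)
The plan is to localize at each higher-rank valuation and produce a nearby rank-$1$ valuation inside any prescribed constructible neighborhood, using Lemma~\ref{2v5} together with a strict analogue of it as the main tool. First I recall that the constructible topology on $\Spa(\Bi,\Bic)$ is generated by the rational subsets as clopen sets, so a basic constructible neighborhood of a valuation $v$ is a finite conjunction of conditions of the shape $v(f_i)\leq v(g_i)$, $v(h_j)<v(k_j)$, and $v(\ell_m)\neq 0$. By Remark~\ref{LocGens} we may take all the elements $f_i,g_i,h_j,k_j,\ell_m$ in $B_{L,E}$, placing us in the setting of Lemma~\ref{2v5}. Since a rank-$1$ valuation always lies in its own constructible neighborhoods, Proposition~\ref{ABclass} and Theorem~\ref{only5} reduce the problem to producing a rank-$1$ valuation inside a given constructible neighborhood of a type-$5$ valuation $v=\beta_{u,r^+}$ or $v=\beta_{u,r^-}$.

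Focusing on $v=\beta_{u,r^+}$ (the case $v=\beta_{u,r^-}$ being symmetric with $r'$ approaching $r$ from below), Lemma~\ref{2v5} already handles the weak inequalities, giving thresholds $s_i>r$ such that $\beta_{u,r'}(f_i)\leq\beta_{u,r'}(g_i)$ for $r'\in(r,s_i)$. To deal with the strict inequalities I would prove the natural analogue of Lemma~\ref{2v5}: if $\beta_{u,r^+}(h)<\beta_{u,r^+}(k)$, then there exists $s>r$ with $\beta_{u,r'}(h)<\beta_{u,r'}(k)$ for all $r'\in(r,s)$. The argument rerunnable the stable-presentation analysis of Lemma~\ref{2v5}: by Remark~\ref{5uniquemax} the maxima defining type-$5$ values are attained at unique indices $i(h), i(k)$, and the strict inequality in $\Gamma=\R_{>0}\times\Z$ splits into the case of strictly separated leading real coefficients (which persists by continuity) and the case of equal leading real coefficients with $i(h)<i(k)$; in the latter case the ratio $(r'/r)^{i(h)-i(k)}$ computed from the leading terms strictly decreases as $r'$ moves to the right of $r$, preserving the strict inequality. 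The non-vanishing conditions $v(\ell_m)\neq 0$ are preserved by the same mechanism since the corresponding leading coefficient is nonzero and the valuation is continuous in $r'$. Taking $r'\in(r,s)$ for $s$ the minimum of the finitely many thresholds produced for the individual conditions gives a rank-$1$ valuation $\beta_{u,r'}$ in the neighborhood.

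The only substantive step beyond invoking Lemma~\ref{2v5} is proving its strict analogue. The argument is short and runs in parallel with the weak version, but the main pitfall is the interaction between the sign of $i(h)-i(k)$ and the direction in which $r'$ approaches $r$: it is exactly this sign bookkeeping that explains why $\beta_{u,r^+}$ is approximated from above and $\beta_{u,r^-}$ from below, and mixing them up would give a vacuous statement. The remaining ingredients---describing basic constructible neighborhoods by finite lists of inequalities on elements of $B_{L,E}$ via Remark~\ref{LocGens}, and assembling the finitely many thresholds into a single valid interval---are routine.
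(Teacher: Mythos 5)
Your proposal is correct and follows essentially the same route as the paper: reduce via the classification to a type-$5$ point $\beta_{u,r^\pm}$, express the constructible condition through finitely many inequalities among elements of $B_{L,E}$, and use Lemma~\ref{2v5} to find a nearby type-$2$ or type-$3$ point satisfying all of them. Your explicit isolation of the strict analogue of Lemma~\ref{2v5} is a reasonable point of care; the paper's proof invokes Lemma~\ref{2v5} for the strict inequalities without separately stating that variant, but the underlying leading-index argument is the same.
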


\begin{proof}
It is known (eg \cite[Definition 2.4.8]{KL}) that $\mathcal{M}(\Bi)$ is dense in $\Spa(\Bi,\Bic)$ in the standard topology. We must show that any subset of $\Spa(\Bi,\Bic)$ that is locally closed in the standard topology has nonempty intersection with $\mathcal{M}(\Bi)$. As we have a basis of rational subsets, it is enough to show that if we have rational subsets $\Spa(C,C^+)$ and $\Spa(D,D^+)$ of $\Spa(\Bi,\Bic)$ and we have some semivaluation $v\in \Spa(C,C^+)\setminus\Spa(D,D^+)$, then there is some semivaluation $v'\in(\Spa(C,C^+)\setminus\Spa(D,D^+))\cap \mathcal{M}(\Bi)$. 

If $v\in\mathcal{M}(\Bi)$ there is nothing to check, so we may assume that $v$ has rank greater than 1. By the above classification, $v$ must be a type-$5$ point. We will assume that $v=\beta_{u,r^+}$, a similar argument will take care of the other option of $v=\beta_{u,r^-}$. Then if the corresponding type-$2$ point $\beta_{u,r}$ is in $\Spa(C,C^+)\setminus\Spa(D,D^+)$ we are done, so we can assume this is not the case. Then we will use \ref{2v5} to show that there is some $s>r$ such that for all $r'\in (r,s)$, $\beta_{u,r'}$ is in $\Spa(C,C^+)\setminus\Spa(D,D^+)$.

As $\beta_{u,r}\not\in\Spa(C,C^+)\setminus\Spa(D,D^+)$, we either have $\beta_{u,r}\in\Spa(D,D^+)$ or $\beta_{u,r}\not\in\Spa(C,C^+)$. If we are in the first case, then by definition there are finitely many elements (possibly not all the defining elements of the rational subset) $f_i,g\in B_{L,E}$ such that $\beta_{u,r}(f_i)\leq \beta_{u,r}(g)\neq 0$ and $\beta_{u,r^+}(f_i)>\beta_{u,r^+}(g)$. Then for each $i$, by Lemma \ref{2v5} we have some interval $(r,s_i)$ such that for all $r'\in (r,s_i)$, $\beta_{u,r'}(f_i)> \beta_{u,r'}(g)\neq 0$. As we are assuming that $\Bip=\Bic$, there is some interval $(r,s')$ such that $\beta_{u,r'}\in\Spa(\Bi,\Bic)$ for all $r'\in (r,s')$. The intersection of this finite set of intervals is nonempty, giving valuations of types $2$ and $3$ that are also in $\Spa(C,C^+)\setminus\Spa(D,D^+)$ as desired. The other case follows similarly.
\end{proof}

\begin{corollary}\label{berkcover}
A finite collection of rational subspaces of $\Spa(\Bi,\Bic)$ forms a covering if and only if the intersections with $\mathcal{M}(\Bi)$ do so.
\end{corollary}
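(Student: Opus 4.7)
The forward direction is immediate: if the $\Spa(C_i, C_i^+)$ cover $\Spa(\Bi,\Bic)$, then intersecting with $\mathcal{M}(\Bi)$ gives a cover of $\mathcal{M}(\Bi)$. All the work is in the reverse direction, and the whole point of Proposition \ref{contop} is precisely to make this kind of reduction possible.

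My plan is to recast the question in the constructible topology on $\Spa(\Bi,\Bic)$. Recall that each rational subset is a quasi-compact open in the standard topology, so it is clopen in the constructible topology; a finite union of rational subsets is therefore also constructible-clopen, and so is its complement. Assume that the rational subsets $\Spa(C_i, C_i^+)$ cover $\mathcal{M}(\Bi)$ but fail to cover all of $\Spa(\Bi,\Bic)$. Then the complement $Z = \Spa(\Bi,\Bic) \setminus \bigcup_i \Spa(C_i, C_i^+)$ is a nonempty constructible-clopen set that is disjoint from $\mathcal{M}(\Bi)$.

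This contradicts Proposition \ref{contop}: constructible density of $\mathcal{M}(\Bi)$ means exactly that every nonempty constructible subset meets $\mathcal{M}(\Bi)$. Hence $Z$ must have been empty, and the rational subsets cover all of $\Spa(\Bi,\Bic)$.

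There is really no obstacle here beyond unwinding the definitions; the entire content has been absorbed into Proposition \ref{contop}, which is why the corollary appears immediately after it. The only thing to be careful about is to invoke the standard fact that a quasi-compact open in a spectral space is clopen in the constructible topology, so that the complement of a finite union of rational subsets is itself constructible (rather than merely locally closed), allowing the density statement to bite.
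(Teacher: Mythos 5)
Your proof is correct and is exactly the intended argument: the paper gives no separate proof of this corollary precisely because it follows immediately from Proposition \ref{contop} in the way you describe (the complement of a finite union of rational subsets is constructible, indeed even closed in the standard topology, so if nonempty it must meet $\mathcal{M}(\Bi)$). Nothing is missing.
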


\begin{corollary}\label{berkrat}
A rational subspace of $\Spa(\Bi,\Bic)$ is determined by its intersection with $\mathcal{M}(\Bi)$.
\end{corollary}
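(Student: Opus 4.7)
The plan is to reduce this to the density statement established in Proposition \ref{contop}. Given two rational subspaces $U_1, U_2 \subseteq \Spa(\Bi,\Bic)$ with $U_1 \cap \mathcal{M}(\Bi) = U_2 \cap \mathcal{M}(\Bi)$, I want to conclude that $U_1 = U_2$, and the natural way to do this is to show that the symmetric difference $U_1 \triangle U_2$ is empty.

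First I would observe that each rational subspace is a quasi-compact open set, and therefore constructible in the sense of spectral spaces. Boolean combinations of constructible sets are constructible, so $U_1 \triangle U_2 = (U_1 \setminus U_2) \cup (U_2 \setminus U_1)$ is constructible and in particular is clopen in the constructible topology of $\Spa(\Bi,\Bic)$.

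Next I would invoke Proposition \ref{contop}, which says that $\mathcal{M}(\Bi)$ is dense in $\Spa(\Bi,\Bic)$ in the constructible topology. Since $U_1 \triangle U_2$ is open in the constructible topology and has empty intersection with $\mathcal{M}(\Bi)$ by hypothesis, density forces $U_1 \triangle U_2 = \emptyset$, which gives $U_1 = U_2$.

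I do not expect a significant obstacle, as the entire substantive content has been packaged into Proposition \ref{contop}; this corollary is essentially a formal consequence. The one point worth taking a little care with is the verification that the set-theoretic difference of two rational subspaces is indeed locally closed (and hence constructible, hence clopen in the constructible topology), but this follows immediately from the fact that rational subspaces are quasi-compact and open in the standard topology on an adic spectrum.
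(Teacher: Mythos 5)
Your proof is correct and matches the paper's intent: the corollary is left as an immediate consequence of Proposition \ref{contop}, whose proof already shows that any nonempty difference of two rational subspaces contains a rank-one point. Your packaging via the symmetric difference being constructible, hence clopen in the constructible topology, is just a formalization of that same deduction.
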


\begin{prop}\label{pbloc} If $\Bip$ equals $\Bic$, the ring of power-bounded elements of $\Bi$, then for any rational localization $(\Bi,\Bip)\ra (C,C^+)$, one also has $C^+=C^\circ$.
\end{prop}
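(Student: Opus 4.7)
My plan is to prove the nontrivial inclusion $C^\circ\subseteq C^+$; the reverse inclusion $C^+\subseteq C^\circ$ is automatic because $C^+$ is a bounded open subring of $C$, hence consists of power-bounded elements.

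Take $x\in C^\circ$ and suppose for contradiction that there exists $v_0\in\Spa(C,C^+)$ with $v_0(x)>1$. The set
\[ U:=\{v\in\Spa(C,C^+): v(x)>v(1)\} \]
is the complement in $\Spa(C,C^+)$ of the rational subset $\{v(x)\le v(1)\}$, so it is nonempty and locally closed in the standard topology. Since $(\Bi,\Bip)\to(C,C^+)$ is a rational localization and $\Bip=\Bic$ by hypothesis, $\Spa(C,C^+)$ sits as an open subspace of $\Spa(\Bi,\Bic)$, and $U$ remains locally closed there. Applying Proposition~\ref{contop}, which says that $\mathcal{M}(\Bi)$ is dense in $\Spa(\Bi,\Bic)$ in the constructible topology, we obtain a rank-one valuation $v$ in $U$. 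By the universal property of rational localization, $v$ extends uniquely along $\Bi\to C$ (forced by $v(T_i)=v(f_i)/v(g)\in\R_{>0}$) to a continuous rank-one valuation on $C$ still satisfying $v(x)>1$.

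To derive the contradiction, I use that $C$ inherits the topologically nilpotent unit $\varpi$ from $\Bi$, so $C$ is a Tate ring. A continuous rank-one valuation on such a ring is bounded on every bounded subset: given bounded $M\subseteq C$, continuity of $v$ at $0$ supplies an open neighborhood $V$ of $0$ on which $v\le 1$, and topological nilpotence of $\varpi$ gives an integer $k$ with $\varpi^k M\subseteq V$, whence $v(y)\le v(\varpi)^{-k}$ for all $y\in M$. Applying this to the bounded set $\{x^n\}_{n\ge 0}$ yields a uniform bound on $v(x^n)=v(x)^n$, forcing $v(x)\le 1$ and contradicting $v\in U$. The main subtlety is the transfer step from $\Spa(\Bi,\Bic)$ to $\Spa(C,C^+)$: one needs the standard facts that $\Spa(C,C^+)$ carries the subspace topology and that a rank-one valuation on $\Bi$ satisfying the rational inequalities cutting out $\Spa(C,C^+)$ extends to a rank-one continuous valuation on $C$, so that Proposition~\ref{contop} can be invoked directly.
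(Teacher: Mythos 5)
Your proof is correct, and while its first half mirrors the paper's reduction, its second half takes a genuinely different route. Both arguments use the constructible density of $\mathcal{M}(\Bi)$ (Proposition~\ref{contop}) to reduce the inequality $v(x)\leq 1$ to rank-one valuations: the paper does this via Corollary~\ref{berkrat}, comparing $(C,C^+)$ with an auxiliary rational localization $(C',C'^+)$ cut out by $v(fg)\leq v(g)$, whereas you apply the density statement directly to the set $\{v(x)>1\}$, the complement of a Laurent subset and hence constructible (to invoke Proposition~\ref{contop} literally one should note, as in Remark~\ref{LocGens}, that this Laurent subset may be re-presented with defining elements in $B_{L,E}$). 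The real divergence is in how the rank-one case is settled. The paper leans on the structure theory developed earlier: it realizes $v$ as the restriction of some $H(u,0)$ after base change to an overfield $L'$, identifies $v$ with the quotient norm on $C'/(\varpi-[u])$, and uses the isomorphism $B^I_{L',E}/(\varpi-[u])\ra C'/(\varpi-[u])$ to reduce to a multiplicative norm on a field, where power-bounded elements visibly have norm at most $1$. You instead use the purely topological fact that a continuous rank-one valuation on a Tate ring is bounded on every bounded subset, applied to $\{x^n\}_{n\geq 0}$; your verification of this fact is correct, and you rightly observe that the rank-one hypothesis is essential (for a type-$5$ valuation, $v(x)^n$ can stay bounded with $v(x)>1$ in $\Gamma$). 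Your argument is more elementary and more portable---it would prove the same statement for any Tate ring whose rank-one points are constructibly dense---while the paper's argument is the template reused verbatim in Proposition~\ref{Cpbloc}, where the Dedekind-domain structure of the \'etale extensions is exploited. Two minor points: $C^+\subseteq C^\circ$ holds because a ring of integral elements is by definition contained in $C^\circ$ (it need not be bounded), and the extension of $v$ to $C$ is rank one because it is the continuous extension of the real-valued multiplicative seminorm on $\Bi[f_1/g,\dots,f_n/g]$.
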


\begin{proof}
By definition we have $C^+\subset C^\circ$, so we must show that $C^\circ\subset C^+$. By the description of $C^+$ at the start of the section, this can be done by showing that for any $x\in C^\circ$ and $v\in\Spa(C,C^+)$, $v(x)\leq 1$. We claim that it is enough to check this for $v\in\Spa(C,C^+)\cap\mathcal{M}(\Bi)$, i.e. when $v$ is a rank-one valuation. Assume the contrary: that $C^+\neq C^\circ$ but $\Spa(C,C^+)\cap\mathcal{M}(\Bi)=\Spa(C,C^\circ)\cap\mathcal{M}(\Bi)$. Then we can choose some $f\in C^\circ\setminus C^+$ and add the condition that $v(fg)\leq v(g)$ to the defining inequalities of $(\Bi,\Bip)\ra (C,C^+)$. This will give a new rational localization $(\Bi,\Bip)\ra (C',C'^+)$ such that $\Spa(C',C'^+)\subsetneq \Spa(C,C^+)$ as we are simply enforcing an extra nontrivial inequality. But by our assumption, this inequality was already satisfied by all the elements of $\Spa(C,C^+)\cap\mathcal{M}(\Bi)$, so we have $\Spa(C,C^+)\cap\mathcal{M}(\Bi)=\Spa(C',C'^+)\cap\mathcal{M}(\Bi)$. This contradicts Corollary \ref{berkrat}, so we must have $C^\circ=C^+$ as desired. We remark that $\Spa(C,C^\circ)$ need not a priori be a rational localization, this is why we had to construct $\Spa(C',C'^+)$.

Now fix some $v\in\Spa(C,C^+)\cap \mathcal{M}(\Bi)$. By \cite[Lemma 6.3]{NP}, $v$ is the restriction of a norm of the form $H(u,0)$ on some perfect overfield $L'$ of $L$ (compare with Lemma \ref{vzerorestrict}). The inclusion $L\ra L'$ gives an inclusion $\Bi\ra B^I_{L',E}$ such that $B^{I,\circ}_{L',E}\cap\Bi=\Bic$. Let $(C',C'^+)$ denote the base extension of $(C,C^+)$ along $(\Bi,\Bic)\ra(B^I_{L',E},B^{I,\circ}_{L',E}),$ so $$C'=B^I_{L',E}\{T_1/\rho_1,\dots,T_n/\rho_n\}/(gT_1-f_1,\dots,gT_n-f_n)$$ where $f_1,\dots,f_n,g$ are simply the corresponding elements of $\Bi$ viewed as elements of $B^I_{L',E}$. As $C'^\circ\cap C=C^\circ$ and $v'\in\Spa(C',C'^+)$ restricts to $v$ on $C$, checking that $v'(x)\leq 1$ for all $x\in C'^\circ$ will give our result.

By \cite[Remark 5.14]{NP}, the norm $v'$ on $B^I_{L',E}$ is just the quotient norm on $B^I_{L',E}/(\varpi-[u])=\mathcal{H}(v')$ (compare with \ref{vzero}). So extending $v'$ to $C'$, we get the quotient norm on $C'/(\varpi-[u])$. By \cite[Lemma 7.3]{NP}, the map $B^I_{L',E}/(\varpi-[u])\ra C'/(\varpi-[u])$ is an isomorphism, so we have reduced to looking at a multiplicative norm on a field. In this case, it is clear that power bounded elements have norm at most 1, so we are done.
\end{proof}

\section{\'Etale Morphisms}

\'Etale morphisms of adic spaces were first defined and studied by Huber in \cite{Hub}. In \cite[Section 8]{NP}, Kedlaya gives some results on \'etale morphisms of extended Robba rings. In this section, we recall the setup of Huber and some of Kedlaya's results. We then extend the results of the previous sections to \'etale morphisms.

\begin{hypothesis}\label{etalehyp}
Let $(\Bi, \Bip)\ra (C,C^+)$ be a morphism of adic Banach rings which is \'etale in the sense of Huber \cite[Definition 1.6.5]{Hub}. In particular, $C$ is a quotient of $\Bi\{T_1/\rho_1,\dots,T_n/\rho_n\}$ for some $n$, so it is strongly noetherian by \cite[Theorem 4.10]{NP}.
\end{hypothesis}

By definition, we get induced morphisms $\Spa(C,C^+)\ra\Spa(\Bi, \Bip)$ and $\mathcal{M}(C)\ra\mathcal{M}(\Bi)$.

\begin{lemma}\label{etalestructure} 
There exist finitely many rational localizations $\{(C,C^+)\ra (D_i, D_i^+)\}_i$ such that $\cup_i \Spa(D_i,D_i^+)=\Spa(C,C^+)$ and for each $i$, $(\Bi,\Bip)\ra (D_i, D_i^+)$ factors as a connected rational localization $(\Bi, \Bip)\ra (C_i, C_i^+)$ followed by a finite \'etale morphism $(C_i, C_i^+)\ra (D_i, D_i^+)$ with $D_i$ also connected.
\end{lemma}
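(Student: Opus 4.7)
The plan is to combine Huber's local structure theorem for \'etale morphisms with a two-stage idempotent decomposition. By the local structure available from Huber's definition of \'etale (\cite[Definition 1.6.5]{Hub}), for each point of $\Spa(C,C^+)$ there is a rational neighborhood $\Spa(D_i',(D_i')^+)$ through which the map from $(\Bi,\Bip)$ factors as a rational localization $(\Bi,\Bip)\ra (C_i',(C_i')^+)$ followed by a finite \'etale morphism $(C_i',(C_i')^+)\ra (D_i',(D_i')^+)$. By quasi-compactness of $\Spa(C,C^+)$ I extract a finite such cover. This gives the right shape of factorization but not yet the connectedness of $C_i'$ or $D_i'$.

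To achieve connectedness, I would apply an idempotent-decomposition argument twice. Because $\Bi$ is strongly noetherian (\cite[Theorem 4.10]{NP}), each $C_i'$ is noetherian, so it has only finitely many idempotents and decomposes as a finite product of connected rings $C_{i,k}$. Each factor is of the form $C_i'[1/e]$ for an idempotent $e$, and the corresponding subset $\{v:v(e)\neq 0\}\subseteq \Spa(C_i',(C_i')^+)$ is a rational localization; composing with the rational localization from $(\Bi,\Bip)$ yields a connected rational localization $(\Bi,\Bip)\ra (C_{i,k},C_{i,k}^+)$, since rational localizations compose. Base change of the finite \'etale morphism along this decomposition produces $D_{i,k}:=D_i'\otimes_{C_i'}C_{i,k}$, still finite \'etale over $C_{i,k}$, whose adic spectrum is a rational subset of $\Spa(D_i',(D_i')^+)$.

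Since $D_{i,k}$ is finite \'etale over the connected noetherian ring $C_{i,k}$, it is a finite projective $C_{i,k}$-module with only finitely many idempotents, so it splits as a finite product $D_{i,k}=\prod_l D_{i,k,l}$ of connected finite \'etale $C_{i,k}$-algebras. Each $\Spa(D_{i,k,l},D_{i,k,l}^+)$ is a rational subset of $\Spa(D_{i,k},D_{i,k}^+)$, and hence of $\Spa(C,C^+)$, again because rational localizations compose. Relabeling the triples $(i,k,l)$ as the required index set gives the desired finite cover, each piece factoring as a connected rational localization from $(\Bi,\Bip)$ followed by a finite \'etale morphism with connected target.

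The main obstacle is the first step: extracting from Huber's definition of an \'etale morphism the concrete local factorization as a rational localization of $(\Bi,\Bip)$ followed by a finite \'etale morphism, in a form applicable to the adic Banach rings considered here. Once that is available, the two-stage idempotent decomposition is routine, relying only on the finiteness of idempotents in noetherian rings and the standard fact that a finite \'etale algebra over a connected noetherian ring is a finite product of connected finite \'etale algebras.
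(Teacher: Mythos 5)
Your proposal is correct and follows essentially the same route as the paper, whose entire proof is the single citation \cite[Lemma 2.2.8]{Hub}; that lemma of Huber is precisely the local factorization you flag as the ``main obstacle,'' so it is a quotable result rather than something to be extracted from Definition 1.6.5 by hand. Your supplementary two-stage idempotent decomposition to arrange connectedness of the $C_i$ and $D_i$ is routine and correct.
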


\begin{proof}
\cite[Lemma 2.2.8]{Hub}
\end{proof}

The above construction commutes with base extension, so when we extend from $L$ to $L'$ as in Lemma \ref{vzerorestrict} the above result is retained. 

\begin{prop}\label{DDed}
The rings $C_i$ are all principal ideal domains and the rings $D_i$ are all Dedekind domains.
\end{prop}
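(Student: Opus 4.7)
The plan is to prove each $C_i$ is a PID by combining the classification of maximal ideals from Proposition \ref{ABclass} with a division argument via stable presentations, and then deduce that $D_i$ is Dedekind from general properties of finite \'etale morphisms. First, $C_i$ is strongly Noetherian by Hypothesis \ref{etalehyp}, and is a domain because it is a connected rational localization of the domain $\Bi$; Proposition \ref{ABclass} applied to the rational subdomain shows $\dim C_i = 1$ since nonzero primes correspond to type-$1$ points $\beta_{u,0}$ and are all maximal, while the zero ideal is the unique minimal prime.

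The substantive step is to show every maximal ideal $\mathfrak{m} \subset C_i$ is principal. Fix $\mathfrak{m}$ corresponding to some $\beta_{u,0}$, and set $\pi := \varpi - [u]$. After base change to $\tilde L$ along $\Bi \hookrightarrow B^I_{\tilde L, E}$, as in the discussion following Lemma \ref{etalestructure}, we may assume $u \in \O_{\tilde L}$ and that $\pi$ lies in the extension of $\mathfrak{m}$. For any $f$ in the extension of $\mathfrak{m}$, Lemma \ref{stabexist} gives $f = \sum_{i \ge 0} x_i \pi^i$ with each $x_i$ stable; by Remark \ref{vzero}, $\beta_{u,0}(\pi) = 0$ and the kernel of $\beta_{u,0}$ equals $(\pi)$, so $\beta_{u,0}(f) = \beta_{u,0}(x_0) = 0$ forces $x_0 \in (\pi)$. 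Writing $x_0 = \pi z$ and absorbing $z$ into $x_1$ yields a new presentation with vanishing constant term, so $f = \pi g$ and $f \in (\pi)$. Hence the extension of $\mathfrak{m}$ to the base change equals $(\pi)$. Taking the norm over the Galois orbit of $u$ produces an element of $\Bi$, and hence of $C_i$, which generates $\mathfrak{m}$; equality (rather than mere containment) uses that both are height-$1$ primes in a $1$-dimensional Noetherian domain. Combined with Noetherianity, this makes $C_i$ a PID.

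For $D_i$, the map $(C_i, C_i^+) \to (D_i, D_i^+)$ is finite \'etale with $D_i$ connected. Finiteness preserves Noetherianity and Krull dimension, so $\dim D_i = 1$; \'etaleness preserves regularity, so $D_i$ is regular; and connectedness together with being a finite flat extension of the domain $C_i$ forces $D_i$ to be a domain. A Noetherian regular $1$-dimensional domain is a Dedekind domain, giving the result.

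The main obstacle I expect is the Galois descent step: producing a generator of $\mathfrak{m}$ living in $C_i$ rather than merely in its $\tilde L$-base change, and verifying that this descended element generates $\mathfrak{m}$ rather than a proper subideal or a power of $\mathfrak{m}$. A related subtlety is justifying that $C_i$ is itself a domain: a priori a connected rational localization of a domain need only be reduced and Noetherian with connected spectrum (the example $k[x,y]/(xy)$ shows this is not automatic), so one must use the $1$-dimensional classification of points together with the specific structure of $\Bi$ to rule out multiple minimal primes.
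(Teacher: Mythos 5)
The paper does not prove this proposition at all; it simply cites \cite[Theorem 7.11(c)]{NP} and \cite[Theorem 8.3(b)]{NP}, where Kedlaya establishes exactly these statements. Your attempt to reprove the claim about the $C_i$ from scratch contains genuine gaps. First, you apply Lemma \ref{stabexist} to an element $f$ of (the base change of) the rational localization $C_i$, but stable presentations with respect to $\pi=\varpi-[u]$ exist only for elements of $\WLE$ (and, after the extension carried out in \cite{NP}, of $\Bi$); a general element of $C_i=\Bi\{T_1/\rho_1,\dots,T_n/\rho_n\}/(gT_1-f_1,\dots,gT_n-f_n)$ admits no such presentation, so the division step concluding $f\in(\pi)$ is unjustified. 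Second, the descent step you flag as ``the main obstacle'' is a genuine obstruction rather than a technicality: the center $u$ of a type-$1$ point lies in $\O_{\tilde{L}}$ with $\tilde{L}$ the \emph{completed} algebraic closure, so $u$ need not be algebraic over $L$ and ``the norm over the Galois orbit of $u$'' is not defined. In \cite{FF} and \cite{NP} this is handled through the theory of primitive elements of degree one and the factorization of Theorem \ref{Wfactor}, which is precisely the substance of the cited results. Third, as you yourself observe, you have not shown that $C_i$ is a domain; this, too, is part of what \cite[Theorem 7.11]{NP} supplies, and it cannot be extracted from connectedness alone.

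Your treatment of the $D_i$ is right in outline, but one step is stated incorrectly: a connected finite flat extension of a domain need not be a domain (consider $k[x,y]/(y^2-x^2)$ over $k[x]$). The correct deduction is that $D_i$ is regular because it is \'etale over the regular ring $C_i$, hence normal, and a connected noetherian normal ring is a domain; combined with noetherianity and dimension one, this gives the Dedekind property.
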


\begin{proof}
See \cite[Theorem 7.11(c)]{NP} and \cite[Theorem 8.3(b)]{NP}.
\end{proof}

We now start working towards a classification of the points of $\Spa(C,C^+)$. We have a map $\Spa(C, C^+)\ra \Spa(\Bi, \Bip)$ and a good understanding of the points of $\Spa(\Bi, \Bip)$ from Proposition \ref{ABclass}, so given a point $v\in\Spa(\Bi, \Bip)$ we will look at the preimage $\{w_j\}_{j\in J}$. We will show that this is a finite set of valuations with the same rank and radius as $v$.

\begin{prop}\label{etalerank}
Given a valuation $w\in\Spa(C, C^+)$ mapping to $v\in \Spa(\Bi, \Bip)$, the rank of $w$ is the same as the rank of $v$.
\end{prop}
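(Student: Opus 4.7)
My plan is to apply Lemma \ref{etalestructure} to reduce the problem to two separately manageable cases: rational localizations and finite étale morphisms.

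First I would use Lemma \ref{etalestructure} to cover $\Spa(C, C^+)$ by finitely many rational subsets $\Spa(D_i, D_i^+)$, each fitting into a factorization $(\Bi, \Bip) \to (C_i, C_i^+) \to (D_i, D_i^+)$ in which the first arrow is a rational localization and the second is a finite étale map. Choose an index $i$ such that $w$ lies in $\Spa(D_i, D_i^+)$, write $\tilde w$ for the corresponding valuation on $D_i$, and set $\tilde w' = \tilde w|_{C_i}$, so that $v$ is the restriction of $\tilde w'$ to $\Bi$. It then suffices to check that each of the three arrows $C \leftarrow D_i$, $D_i \leftarrow C_i$, $C_i \leftarrow \Bi$ preserves the rank of the respective valuation.

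For the two rational localization steps $(\Bi, \Bip) \to (C_i, C_i^+)$ and $(C, C^+) \to (D_i, D_i^+)$, I would show that the value group is literally preserved. Concretely, in the presentation $A' = A\{T_j/\rho_j\}/(gT_j - f_j)$ arising from any rational localization $(A, A^+) \to (A', A'^+)$, a valuation $w$ on $A'$ satisfies $w(g) \neq 0$ and $w(T_j) = w(f_j)/w(g)$, so the generators of the overring take values already in the value group of $w|_A$. Hence $\rank w = \rank \tilde w$ and $\rank \tilde w' = \rank v$.

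The remaining task is the finite étale step $C_i \to D_i$. Let $\mathfrak q = \ker \tilde w$ and $\mathfrak p = \mathfrak q \cap C_i = \ker \tilde w'$. Since $D_i$ is finite as a $C_i$-module, $D_i/\mathfrak q$ is integral over $C_i/\mathfrak p$, so passing to fraction fields produces a finite field extension $K_{\mathfrak p} \hookrightarrow K_{\mathfrak q}$ across which $\tilde w$ restricts to $\tilde w'$. By the classical valuation theory of finite field extensions (e.g.\ Bourbaki, \emph{Commutative Algebra}, Ch.\ VI, \S 8), the extension of value groups $\Gamma_{\tilde w'} \hookrightarrow \Gamma_{\tilde w}$ has finite-torsion cokernel, so the convex subgroups of $\Gamma_{\tilde w}$ and $\Gamma_{\tilde w'}$ are in bijection and the ranks agree.

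The hard part will be the finite étale step, specifically verifying that the passage to fraction fields is legitimate in this semivaluation framework: the rings $C_i$ and $D_i$ are a priori not fields but a PID and Dedekind domain respectively (Proposition \ref{DDed}), so one must track the supports carefully before invoking the classical finite-extension result. The rational localization steps by contrast are essentially bookkeeping once Lemma \ref{etalestructure} is available.
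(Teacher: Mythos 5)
Your proposal is correct and follows essentially the same route as the paper: reduce via Lemma \ref{etalestructure} to the case of a finite map (the rational localization steps being open immersions of adic spectra, hence harmless for value groups), then observe that finiteness forces the value group $G$ of $w$ to contain the value group $H$ of $v$ with finite index, so the convex subgroups correspond and the ranks agree. The paper compresses this into two sentences, simply asserting that $[G:H]$ is finite; your passage to fraction fields of the quotients by the supports is the explicit justification of that assertion.
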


\begin{proof}
By Lemma \ref{etalestructure} we can assume that $\Bi\ra C$ is a finite map of rings, so $C$ is a finitely generated $\Bi$-module. Let the value group of $v$ be $H$ and the value group  of $w$ be $G$, then $[G: H]$ is finite so the groups must have the same rank.
\end{proof}

\begin{prop}\label{etalefinite1}
Given a valuation $\beta\in\Spa(\Bi, \Bip)$, the preimage $\{\gamma_j\}_{j\in J}$ is a finite set.
\end{prop}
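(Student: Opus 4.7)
The plan is to apply Lemma \ref{etalestructure} to reduce the problem to a finite \'etale morphism on a rational subdomain. That lemma provides finitely many rational localizations $(C,C^+)\to(D_i,D_i^+)$ whose images cover $\Spa(C,C^+)$, and such that each composite $(\Bi,\Bip)\to(D_i,D_i^+)$ factors as a rational localization $(\Bi,\Bip)\to(C_i,C_i^+)$ followed by a finite \'etale map $(C_i,C_i^+)\to(D_i,D_i^+)$, with $C_i$ a PID and $D_i$ a Dedekind domain by Proposition \ref{DDed}. Since a finite union of finite sets is finite, it suffices to prove that the preimage of $\beta$ in each $\Spa(D_i,D_i^+)$ is finite.

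For each $i$, the rational localization $(\Bi,\Bip)\to(C_i,C_i^+)$ induces an open immersion of adic spectra, so the preimage of $\beta$ in $\Spa(C_i,C_i^+)$ is either empty or a single point $\beta_i$. In the empty case there is nothing to check, so I would assume the nonempty case and reduce to showing that the fiber of the map $\Spa(D_i,D_i^+)\to\Spa(C_i,C_i^+)$ over $\beta_i$ is finite.

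Because $C_i\to D_i$ is finite, $\Spec D_i\to\Spec C_i$ has finite fibers, so only finitely many primes $\iq\subset D_i$ lie over the support $\ip\subset C_i$ of $\beta_i$, and any $\gamma\in\Spa(D_i,D_i^+)$ restricting to $\beta_i$ must have support among these primes. For each such $\iq$, the ring $D_i/\iq$ is a finitely generated $C_i/\ip$-module and is a domain, so $\Fra(D_i/\iq)$ is a finite algebraic extension of $\Fra(C_i/\ip)$, and $\gamma$ corresponds to a valuation on $D_i/\iq$ extending the valuation induced by $\beta_i$ on $C_i/\ip$. The classical fact that any valuation on a field admits at most $[L:K]$ extensions to a finite extension $L/K$ now bounds the number of such $\gamma$ for each $\iq$, and summing over the finitely many admissible $\iq$ (and over the finitely many $i$) yields the required finiteness.

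The main potential obstacle is simply handling the support analysis cleanly across the two cases where $\ip$ is $(0)$ and where $\ip$ is maximal, and verifying that the integrality bound coming from $[\Fra(D_i/\iq):\Fra(C_i/\ip)]$ applies to valuations belonging to $\Spa(D_i,D_i^+)$ rather than to arbitrary valuations on $D_i$. Since the extra condition $\gamma(D_i^+)\leq 1$ only cuts down the set of valuations under consideration, the classical finiteness of extensions survives, and no substantially new input beyond Lemma \ref{etalestructure} and Proposition \ref{DDed} is needed.
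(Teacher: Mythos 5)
Your proposal is correct, and the first step (reducing via Lemma \ref{etalestructure} to a finite \'etale morphism over a rational subdomain, and noting that the rational localizations are open immersions so contribute at most one point each) is exactly the paper's. Where you diverge is in handling the finite \'etale piece: the paper simply cites Huber's general result that finite morphisms of adic spectra have finite fibers \cite[Lemma 1.5.2c]{Hub}, whereas you reprove this special case by hand, first bounding the possible supports $\iq\subset D_i$ over $\ip=\mathrm{supp}(\beta_i)$ using finiteness of $\Spec D_i\to\Spec C_i$, and then invoking the fundamental inequality of valuation theory to bound the number of extensions of a valuation of $\Fra(C_i/\ip)$ to the finite extension $\Fra(D_i/\iq)$. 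Your argument is sound --- the fundamental inequality does hold for valuations of arbitrary rank, and as you note the conditions of continuity and $\gamma(D_i^+)\leq 1$ only shrink the fiber --- so the trade-off is self-containedness and elementarity at the cost of a slightly longer argument; the paper's route is shorter but leans on Huber's machinery.
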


\begin{proof}
By Lemma \ref{etalestructure}, we can choose a neighborhood of $\beta$ so that we are working with a finite \'etale morphism. This now follows from \cite[Lemma 1.5.2c]{Hub}.
\end{proof}

We can now extend the definition of radius to valuations in $\M(C)$. Retaining the notation of the proof of \ref{etalefinite1}, the radius of $\beta$ is the maximal $r\in [0,1]$ such that the restriction of $\beta_{u, r}$ to $\Bi$ is $\beta$. It is therefore natural to define the radius $r_j$ of $\gamma_j$ to be the maximal $r_j\in [0,1]$ such that some element of the preimage of $\beta_{u,r}$ restricts to $\gamma_j$ on $C$.

\begin{prop}\label{etaleradius}
Given $\beta\in\M(\Bi)\subset\Spa(\Bi, \Bip)$ with radius $r$ and $\gamma_j$ in the preimage of $\beta$ with radius $r_j$, we have $r=r_j$.
\end{prop}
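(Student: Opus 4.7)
The plan is to reduce to the finite \'etale case via Lemma \ref{etalestructure} and then to establish the two inequalities $r_j\geq r$ and $r_j\leq r$ by direct comparison, using the compatibility of preimages with the retraction on $\M(\Bi)$ and the finite-fiber property from Proposition \ref{etalefinite1}.

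First I would apply Lemma \ref{etalestructure} so that, after replacing $\Spa(C,C^+)$ by the rational neighbourhood of $\gamma_j$ on which the morphism factors through a finite \'etale map, the ring $C$ is a finite $\Bi$-module and $\M(C)\to\M(\Bi)$ has finite fibres. Then, via Lemma \ref{vzerorestrict}, I would pick a perfect complete overfield $L'$ of $L$ and an element $u\in\mathfrak{m}_{L'}\setminus\{0\}$ realizing $\beta$ as the restriction of $H(u,0)$ on $\Bil$ to $\Bi$; the hypothesis that $\beta$ has radius $r$ then says exactly that $\beta=\beta_{u,r}$ and that $r$ is the largest such parameter.

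For the inequality $r_j\geq r$: the preimage of $\beta_{u,r}=\beta$ in $\M(C)$ contains $\gamma_j$ by hypothesis. Base changing along $\Bi\to\Bil$, which preserves finite \'etaleness, one obtains a lift $\tilde{\gamma}_j\in\M(C\,\hat{\otimes}_{\Bi}\,\Bil)$ sitting above $H(u,0)$ and restricting to $\gamma_j$ on $C$, so the condition defining $r_j$ is realised at $r'=r$. For the inequality $r_j\leq r$: if $r'>r$ then $\beta_{u,r'}\neq\beta$ in $\M(\Bi)$ by maximality of the radius, so any element of $\M(C)$ (or of $\M(C\,\hat{\otimes}_{\Bi}\,\Bil)$) lying over $\beta_{u,r'}$ must map to $\beta_{u,r'}$ in $\M(\Bi)$ and is therefore necessarily distinct from $\gamma_j$, which maps to $\beta$. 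Consequently no preimage of $\beta_{u,r'}$ can restrict to $\gamma_j$ on $C$.

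The point I expect to require the most care is making precise what ``restricts to $\gamma_j$ on $C$'' means for an element in the preimage of $\beta_{u,r'}$: one must decide whether the preimage is formed inside $\M(C)$ directly or inside $\M(C\,\hat{\otimes}_{\Bi}\,\Bil)$ and then restricted to $C$. Once this is made consistent with the convention used for $\beta$ itself (where the analogous definition is pulled back from $\M(W(\mathcal{O}_{L'})_E\text{-type rings})$ through Lemma \ref{vzerorestrict}), both inequalities reduce to tracking images of $\gamma_j$ under $\M(C)\to\M(\Bi)$ and invoking the finite-fiber statement of Proposition \ref{etalefinite1} to guarantee that distinct base points have genuinely disjoint preimages.
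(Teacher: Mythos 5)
Your argument for $r_j\leq r$ coincides with the paper's and is correct. The gap is in the direction $r_j\geq r$. By definition, $r_j$ is the largest $r'$ such that some element of the preimage of $\beta_{u,r'}$ --- a valuation on $\Bil$, with preimage computed in the base change $\Spa(C',C'^+)$ of $(C,C^+)$ along $\Bi\ra\Bil$ --- restricts to $\gamma_j$ on $C$. Your base-change step produces a lift $\tilde\gamma_j$ lying over $H(u,0)=\beta_{u,0}$, which verifies this condition only at $r'=0$. It does not verify it at $r'=r$: the equality ``$\beta_{u,r}=\beta$'' you invoke is only an equality of restrictions to $\Bi$, while $\beta_{u,r}$ and $\beta_{u,0}$ are genuinely different valuations on $\Bil$, and a priori nothing prevents the finitely many preimages of $\beta_{u,r}$ in $\Spa(C',C'^+)$ from all restricting to a single $\gamma_{j_0}$ and missing the other $\gamma_j$. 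The set of $r'$ realizing the defining condition is not obviously up- or down-closed, so you cannot pass from $r'=0$ to $r'=r$ for free.

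The paper closes exactly this gap with a continuity argument: for every $s\in[0,r]$ the preimage of $\beta_{u,s}$ in $\Spa(C',C'^+)$ restricts into the finite discrete set $\{\gamma_j\}$ (since $\beta_{u,s}$ restricts to $\beta$ for all such $s$), and because these preimages vary continuously in $s$ while the target is finite and discrete, the subset of $\{\gamma_j\}$ that is hit is independent of $s$. Combined with your lift at $s=0$, this yields $r_j\geq r$. Without this step (or some substitute, e.g.\ a count showing the image of the fiber over $\beta_{u,s}$ in $\{\gamma_j\}$ is locally constant in $s$), the first inequality remains unproven.
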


\begin{proof}
If $s>r$, $\beta_{u, s}$ doesn't restrict to $\beta$ on $\Bi$ so the preimage of $\beta_{u, s}$ won't contain $\gamma_j$. We therefore have $r\geq r_j$. To show that $r\leq r_j$, we note that extending the radius is continuous and that the preimage of $\beta_{u, s}$ in $\Spa(C', C'^+)$ maps to a subset of the finite set $\{\gamma_j\}\subset \Spa(C,C^+)$ for all $s\in [0,r]$. So by continuity, every $\gamma_j$ in the preimage of $\beta$ is the restriction of some element of the preimage of $\beta_{u,r}$ as desired.
\end{proof}

We finally extend to higher rank valuations.

\begin{prop}\label{etalefinite2}
Let ${\beta}_{u, r^\pm}$ be a type $5$ valuation as in Definition \ref{type5}. Then the preimage of ${\beta}_{u, r^\pm}$ is in bijection with the preimage of $\beta_{u, r}$.
\end{prop}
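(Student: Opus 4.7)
The plan is to realize the bijection as the rank-$1$ specialization map: each rank-$2$ preimage of $\beta_{u, r^\pm}$ specializes to a unique preimage of $\beta_{u, r}$, and conversely each $\gamma_j$ admits a unique such rank-$2$ generization lying over $\beta_{u, r^\pm}$.

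By Lemma \ref{etalestructure}, I first reduce to the case where $(\Bi, \Bip) \to (C, C^+)$ is finite \'etale (possibly after replacing the source by an appropriate rational localization); in this setting $C$ is a Dedekind domain by Proposition \ref{DDed}. Any preimage $\omega$ of $\beta_{u, r^\pm}$ has rank $2$ by Proposition \ref{etalerank}, so composing with the projection $\Gamma = \R_{>0} \times \Z \to \R_{>0}$ yields a rank-$1$ valuation $\omega^{\mathrm{sp}}$ on $C$. Since $\omega|_{\Bi} = \beta_{u, r^\pm}$, applying the same projection on both sides gives $\omega^{\mathrm{sp}}|_{\Bi} = \beta_{u, r}$, so $\omega^{\mathrm{sp}} \in \{\gamma_j\}_j$. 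This defines the specialization map $\phi$ between the two preimage sets.

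For surjectivity, I construct, for each $\gamma_j$, an explicit rank-$2$ generization $\omega_j$ lying over $\beta_{u, r^\pm}$. The element $\pi := \varpi - [u]$ is a uniformizer at $\beta_{u, r}$ in $\Bi$ and, by \'etaleness together with the Dedekind structure of $C$, remains a uniformizer at each $\gamma_j$. Using this, I develop a $\pi$-adic stable-type presentation $y = \sum_{i \geq 0} y_i \pi^i$ for $y \in C$ modeled on Lemma \ref{stabexist}, and define $\omega_j(y) := \max_i (r^\pm/p)^i \gamma_j(y_i)$ in the spirit of Definition \ref{type5}. Verifying (in the style of Theorem \ref{hmult}) that this is a rank-$2$ valuation extending $\gamma_j$ with $\omega_j|_{\Bi} = \beta_{u, r^\pm}$ gives a section of $\phi$. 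For injectivity, any rank-$2$ generization of $\gamma_j$ mapping to $\beta_{u, r^\pm}$ must coincide with $\omega_j$, because the values on each $y_i$ are dictated by $\gamma_j$ and the values on powers of $\pi$ are dictated by $\beta_{u, r^\pm}$.

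The main obstacle is developing the stable-presentation machinery in $C$ at each $\gamma_j$ and checking that $\omega_j$ is a well-defined multiplicative valuation. Adapting Lemmas \ref{stablemma} and \ref{stabexist} requires the $\pi$-adic behavior of $C$ at $\gamma_j$ to mirror that of $\Bi$ at $\beta_{u, r}$. This should be facilitated by first passing to a base change along an algebraic closure of $L$ as in Lemma \ref{vzerorestrict}, so that the explicit factorization of Theorem \ref{Wfactor} becomes available on the algebraically closed side, and then descending the resulting bijection back to the original setting.
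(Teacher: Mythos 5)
Your specialization map $\phi$ is set up correctly: projecting the value group $\Gamma=\R_{>0}\times\Z$ onto $\R_{>0}$ kills a convex subgroup, so $\omega^{\mathrm{sp}}$ is again a continuous valuation, and because the maximizing index in Remark \ref{5uniquemax} also maximizes the projected quantities, $\beta_{u,r^{\pm}}$ does project to $\beta_{u,r}$. This part is sound and would even yield a canonical bijection. The genuine gap is in your proof of bijectivity. The construction of $\omega_j$ via ``stable-type presentations'' in $C$ is not available: stability (Definition \ref{stable}) and the fact that stable presentations compute the valuation (Lemma \ref{stablemma}, Theorem \ref{hmult}) are tied to the Teichm\"uller expansion of elements of $\WLE$, and a general finite \'etale extension $C$ of $\Bi$ carries no such expansion. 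Without an analogue of stability in $C$, the formula $\omega_j(y)=\max_i (r^{\pm}/p)^i\gamma_j(y_i)$ depends on the chosen presentation and multiplicativity fails, so neither the well-definedness of $\omega_j$ nor your injectivity argument (``the values on each $y_i$ are dictated by $\gamma_j$'') goes through. Note also that for $r>0$ the point $\beta_{u,r}$ has trivial kernel, so ``$\pi$ is a uniformizer at $\beta_{u,r}$'' is not meaningful; what is true is that $\beta_{u,r^{\pm}}(\pi)$ generates the rank-two part of the value group, but that alone does not produce distinguished presentations in $C$.

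The paper's proof avoids all of this: after reducing to the finite \'etale case via Lemma \ref{etalestructure}, the fiber cardinality of $\Spa(C,C^+)\ra\Spa(\Bi,\Bip)$ is locally constant, and since $\beta_{u,r^{\pm}}$ lies in the closure of $\{\beta_{u,r}\}$, every open neighborhood of $\beta_{u,r^{\pm}}$ contains $\beta_{u,r}$; hence the two (finite) fibers have the same cardinality and are in bijection. If you want to retain your natural map $\phi$, obtain surjectivity by lifting specializations along the closed (finite) map rather than by constructing $\omega_j$ explicitly; injectivity then follows from the cardinality count.
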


\begin{proof}
This follows from continuity and the fact that the size of the fibers is locally constant.
\end{proof}

\begin{prop}\label{Cdense}
If $(C,C^+)$ is \'etale over $(\Bi,\Bic)$, the rank one valuations $\mathcal{M}(C)$ are dense in $\Spa(C, C^+)$ in the constructible topology.
\end{prop}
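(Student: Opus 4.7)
The approach mirrors that of Proposition \ref{contop}, using the classification of higher-rank valuations in $\Spa(C, C^+)$ from Propositions \ref{etalerank}, \ref{etaleradius}, and \ref{etalefinite2}. First I would use Lemma \ref{etalestructure}: its cover $\{\Spa(D_i, D_i^+)\}$ of $\Spa(C, C^+)$ by rational subsets which are each finite \'etale over some rational localization $(C_i, C_i^+)$ of $(\Bi, \Bic)$ localizes the problem, since intersecting a basic constructible open of $\Spa(C, C^+)$ with $\Spa(D_i, D_i^+)$ yields a basic constructible open of $\Spa(D_i, D_i^+)$. So I may assume $(C, C^+)$ is finite \'etale over $(C', C'^+)$, itself a rational localization of $(\Bi, \Bic)$, and write $\pi : \Spa(C, C^+) \to \Spa(C', C'^+)$ for the structure map.

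Imitating the setup of Proposition \ref{contop}, it then suffices to take a rank-$>1$ valuation $w$ lying in a basic constructible open $\Spa(D, D^+) \setminus \Spa(D', D'^+)$ (with $D, D'$ rational over $C$) and to produce a rank-1 valuation in the same locally closed set. Proposition \ref{etalerank} forces $v := \pi(w)$ to have rank $>1$, hence by Proposition \ref{ABclass} to be of type 5; say $v = \beta_{u, r^+}$, the $r^-$ case being symmetric. By Propositions \ref{etalefinite2} and \ref{etaleradius}, together with the local constancy of fibers of an \'etale morphism, for each $r'$ in some interval $(r, r + \epsilon)$ there is a canonical rank-1 point $w_{r'} \in \pi^{-1}(\beta_{u, r'})$ depending continuously on $r'$ and approaching the unique rank-1 specialization $w_0 \in \pi^{-1}(\beta_{u, r})$ of $w$ as $r' \to r^+$.

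The key technical step is an analogue of Lemma \ref{2v5} for $C$: given $x, y \in C$ with $w(x) \leq w(y)$, one must have $w_{r'}(x) \leq w_{r'}(y)$ for all $r'$ in some smaller interval $(r, r + \epsilon')$. My plan is to reduce to the standard \'etale local form: after possibly shrinking $D$, write $C = C'[T]/(P(T))$ with $P$ monic and $P'$ invertible modulo $(P)$, so that every element of $C$ is a polynomial in $T$ with coefficients in $C'$. The quantity $w_{r'}(T)$ is then the image of a specific root of $P$ in the appropriate completed residue field and varies continuously in $r'$, so applying Lemma \ref{2v5} to the (finitely many) coefficients in $C'$ propagates the inequality from $w$ to $w_{r'}$.

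Applying this analogue simultaneously to the finitely many pairs of elements defining $\Spa(D, D^+)$ and $\Spa(D', D'^+)$ at $w$, one finds $r'$ for which $w_{r'}$ satisfies all of them; continuity also ensures $w_{r'} \in \Spa(C, C^+)$ for $r'$ sufficiently close to $r$, completing the proof. The main obstacle is the analogue of Lemma \ref{2v5}: the stable-presentation argument imported from \cite{NG} is intrinsic to $\Bi$ and does not apply directly to elements of $C$, which is why the standard \'etale local form must be used to reduce the inequalities back to coefficients lying in $C'$.
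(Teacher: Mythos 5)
Your proposal takes a genuinely different (and much harder) route than the paper. The paper's proof is a short pushforward argument: locally on $\Spa(C,C^+)$ the structure map to $\Spa(\Bi,\Bic)$ is a composition of open immersions and finite \'etale maps; \'etale maps are open and finite maps are closed, so a locally closed subset $U\subset\Spa(C,C^+)$ has locally closed image $V$ in the base. Proposition \ref{contop} then produces a rank-one point $v\in V$, and by Proposition \ref{etalerank} any point of $U$ lying over $v$ is itself rank one. Nothing like an upstairs analogue of Lemma \ref{2v5}, a choice of branch $w_{r'}$, or the local form $C=C'[T]/(P)$ is needed.

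Beyond being unnecessarily laborious, your plan has a genuine gap at exactly the step you flag as the main obstacle. First, Lemma \ref{2v5} is proved only for elements of $B_{L,E}$ (via stable presentations), whereas your coefficients $a_i$ live in $C'$, a rational localization of $\Bi$; there is no Remark \ref{LocGens} available to replace general elements of $C$ by polynomials in $T$ with coefficients in $B_{L,E}$. Second, even granting control of the coefficients, the reduction does not propagate the inequality: for $x=\sum_i a_iT^i$ one only has $w_{r'}(x)\leq\max_i w_{r'}(a_i)\,w_{r'}(T)^i$, with possible cancellation, so coefficient-wise inequalities neither imply nor are implied by $w(x)\leq w(y)$, and a lower bound on $w_{r'}(y)$ cannot be extracted this way. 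Finally, the existence of a canonical, continuously varying branch $w_{r'}\in\pi^{-1}(\beta_{u,r'})$ is asserted but not justified; the fibers are merely finite sets, and selecting a consistent section along the path $r'\mapsto\beta_{u,r'}$ is itself a nontrivial claim. The moral is that all of this can be avoided by transporting the locally closed set to the base rather than transporting the argument to the cover.
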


\begin{proof}
The map $\Spa(C, C^+)\ra \Spa(\Bi, \Bic)$ is locally the composition of open immersions and finite \'etale maps. \'Etale maps are smooth and therefore open by \cite[Proposition 1.7.8]{Hub}, and finite maps are closed by \cite[Lemma 1.4.5]{Hub}, so finite \'etale maps send locally closed subsets to locally closed subsets. The same is certainly true of open immersions, so any subset $U$ in $\Spa(C, C^+)$ that is locally closed under the standard topology is mapped to a locally closed subset $V$ in $\Spa(\Bi, \Bic)$. By Proposition \ref{contop}, there is some rank one valuation $v\in V$, and by Proposition \ref{etalerank} the preimage of $v$ is made up of rank one valuations. 
\end{proof}

\begin{corollary}\label{Ccover}
If $(C,C^+)$ is \'etale over $(\Bi,\Bic)$, a finite collection of rational subspaces of $\Spa(C,C^+)$ forms a covering if and only if the intersections with $\mathcal{M}(C)$ do so.
\end{corollary}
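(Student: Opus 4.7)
The forward implication is trivial: if $\{\Spa(D_i,D_i^+)\}$ covers $\Spa(C,C^+)$ then restricting to $\mathcal{M}(C)$ gives a cover of $\mathcal{M}(C)$. So the content is in the converse. My plan is to argue by contradiction using Proposition \ref{Cdense} in essentially the same way that Corollary \ref{berkcover} follows from Proposition \ref{contop}.

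Suppose the rational subspaces $\{\Spa(D_i,D_i^+)\}_{i=1}^n$ fail to cover $\Spa(C,C^+)$, so the set $V := \Spa(C,C^+) \setminus \bigcup_i \Spa(D_i,D_i^+)$ is nonempty. I would observe that each rational subset is quasi-compact open in $\Spa(C,C^+)$, hence constructible; since constructible subsets of a spectral space form a Boolean algebra, $V$ is itself constructible, and in particular $V$ is open in the constructible topology on $\Spa(C,C^+)$.

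By Proposition \ref{Cdense}, $\mathcal{M}(C)$ is dense in $\Spa(C,C^+)$ in the constructible topology, so the nonempty constructible-open set $V$ must meet $\mathcal{M}(C)$. Pick $v \in V \cap \mathcal{M}(C)$; by construction $v$ lies in $\mathcal{M}(C)$ but not in any $\Spa(D_i,D_i^+)$, so $v \notin \bigcup_i (\Spa(D_i,D_i^+) \cap \mathcal{M}(C))$. This contradicts the assumption that the intersections with $\mathcal{M}(C)$ cover $\mathcal{M}(C)$, completing the proof.

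There is no real obstacle here; the work has already been done in establishing Proposition \ref{Cdense}, and the only slightly subtle point is invoking the fact that rational subsets (being quasi-compact open in a spectral space) are constructible, so their complements and finite unions thereof are open in the constructible topology. Given that this already appeared implicitly in the proof of Corollary \ref{berkcover}, I would keep the write-up very brief and simply cite Proposition \ref{Cdense} together with the argument of Corollary \ref{berkcover}.
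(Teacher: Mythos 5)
Your argument is correct and is precisely the intended one: the paper states this as an immediate corollary of Proposition \ref{Cdense} without further proof, and your use of constructibility of finite unions of rational subsets together with constructible-density of $\mathcal{M}(C)$ fills in exactly the implicit reasoning.
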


\begin{corollary}\label{Crat}
If $(C,C^+)$ is \'etale over $(\Bi,\Bic)$, a rational subspace of $\Spa(C,C^+)$ is determined by its intersection with $\mathcal{M}(C)$.
\end{corollary}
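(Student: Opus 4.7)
The plan is to adapt the argument of Corollary \ref{berkrat} to the \'etale setting, with Proposition \ref{Cdense} replacing Proposition \ref{contop}. Suppose $U_1 = \Spa(D_1,D_1^+)$ and $U_2 = \Spa(D_2,D_2^+)$ are two rational subspaces of $\Spa(C,C^+)$ satisfying $U_1 \cap \mathcal{M}(C) = U_2 \cap \mathcal{M}(C)$; the goal is to conclude $U_1 = U_2$.

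First I would observe that each $U_i$ is open in the standard topology of $\Spa(C,C^+)$, so the set-theoretic difference $U_1 \setminus U_2$ is locally closed, being the intersection of an open set with a closed set. If this difference were nonempty, Proposition \ref{Cdense}, which asserts that every nonempty locally closed subset of $\Spa(C,C^+)$ contains a rank-one valuation, would produce some $v \in (U_1 \setminus U_2) \cap \mathcal{M}(C)$. Such a $v$ would lie in $U_1 \cap \mathcal{M}(C)$ but not in $U_2 \cap \mathcal{M}(C)$, contradicting the hypothesis. Hence $U_1 \setminus U_2 = \emptyset$, giving $U_1 \subseteq U_2$; reversing the roles of the two subspaces yields the opposite inclusion.

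I do not expect any serious obstacle here: the substantive content is already packaged in Proposition \ref{Cdense}, whose proof leveraged density in the base (Proposition \ref{contop}) together with the openness of \'etale maps and closedness of finite maps from Huber's work. One minor point worth verifying is that the argument requires only that $U_1$ and $U_2$ be open, so that their difference is locally closed; no finer feature of rational subspaces enters, and this openness is immediate from the definition of a rational localization.
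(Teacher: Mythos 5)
Your argument is correct and is exactly the reasoning the paper intends: the corollary is stated without proof as an immediate consequence of Proposition \ref{Cdense}, and your deduction (the symmetric difference of two rational subspaces is locally closed, hence if nonempty contains a rank-one point, contradicting equal intersections with $\mathcal{M}(C)$) is the standard way to extract it. No issues.
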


\begin{prop}\label{Cpbloc}
If $C^+$ equals $C^\circ$, then for any rational localization $(C, C^+)\ra (D, D^+)$, one also has $D^+=D^\circ$.
\end{prop}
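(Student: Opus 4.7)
The plan is to mirror the proof of Proposition \ref{pbloc}, using the \'etale density Corollary \ref{Crat} in place of Corollary \ref{berkrat}. The composite $(\Bi, \Bic) \to (D, D^+)$ is \'etale, being a rational localization of the \'etale morphism $(\Bi, \Bic) \to (C, C^+)$, so Corollary \ref{Crat} applies directly to $\Spa(D, D^+)$. Since $D^+ \subseteq D^\circ$ is automatic, I would first reduce to checking the rank-one case exactly as in Proposition \ref{pbloc}: assuming the statement has been verified on rank-one points, the existence of some $f \in D^\circ \setminus D^+$ would allow adjoining the inequality $v(f) \le 1$ to the defining data of $(C, C^+) \to (D, D^+)$, cutting out a strictly smaller rational subspace of $\Spa(D, D^+)$ but with the same intersection with $\mathcal{M}(D)$---contradicting Corollary \ref{Crat}.

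For the rank-one case, fix $w \in \Spa(D, D^+) \cap \mathcal{M}(D)$ and $x \in D^\circ$. By Proposition \ref{etalerank}, $w$ restricts to a rank-one valuation $v_0 \in \Spa(\Bi, \Bic) \cap \mathcal{M}(\Bi)$. Following Proposition \ref{pbloc}, I would use Lemma \ref{vzerorestrict} to produce a perfect overfield $L'$ of $L$ and $u \in \mathfrak{m}_{L'}$ with $v_0 = H(u, 0)|_{\Bi}$, then base change to $(B^I_{L', E}, B^{I, \circ}_{L', E})$ to obtain $(D', D'^+)$ together with a rank-one extension $w'$ of $w$ restricting to $H(u, 0)$ on $B^I_{L', E}$. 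Since power-boundedness is preserved under $D \to D'$, it suffices to show $w'(x) \le 1$. By Remark \ref{vzero} we have $w'(\varpi - [u]) = 0$, so multiplicativity forces $w'$ to factor through $D'/(\varpi - [u])$.

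The crux, replacing \cite[Lemma 7.3]{NP}, is that $D'/(\varpi - [u])$ is now a finite \'etale algebra over the field $\mathcal{H}(v_0) = B^I_{L', E}/(\varpi - [u])$ rather than a copy of $\mathcal{H}(v_0)$ itself. Indeed, Lemma \ref{etalestructure} locally factors $D'$ over $B^I_{L', E}$ as a rational localization followed by a finite \'etale extension; the rational piece collapses to $\mathcal{H}(v_0)$ just as in \cite[Lemma 7.3]{NP}, while the finite \'etale piece base-changes to a finite \'etale $\mathcal{H}(v_0)$-algebra, i.e.\ a finite product of finite separable field extensions. Multiplicativity forces $w'$ to factor through a single such factor $F$, on which it becomes a multiplicative nonarchimedean norm; in a normed field, power-bounded elements automatically satisfy $w'(\cdot) \le 1$. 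I expect the main obstacle to be verifying this identification of $D'/(\varpi - [u])$ as a finite \'etale $\mathcal{H}(v_0)$-algebra---the \'etale analogue of \cite[Lemma 7.3]{NP}---which should reduce to the flatness of finite \'etale morphisms and the stability of \'etaleness under base change.
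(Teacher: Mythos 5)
Your proposal is correct and follows the same overall strategy as the paper's (very terse) proof: reduce to rank-one valuations via the constructible-density results, extend $L$ to $L'$ so that the rank-one valuation acquires a nontrivial kernel, and conclude by observing that power-bounded elements of a field equipped with a multiplicative norm have norm at most $1$. The one place where you genuinely diverge is the mechanism for seeing that the relevant quotient is a field. The paper invokes Lemma \ref{etalestructure} together with Proposition \ref{DDed} to reduce to the case of a Dedekind domain, so that the nonzero kernel of the rank-one valuation is automatically a maximal ideal and the quotient is a field. You instead prove an \'etale analogue of \cite[Lemma 7.3]{NP}, identifying $D'/(\varpi-[u])$ as a finite \'etale $\mathcal{H}(v_0)$-algebra (the rational piece collapsing as in the cited lemma, the finite \'etale piece base-changing to a finite product of finite separable field extensions), and then use multiplicativity of $w'$ to land in a single field factor. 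Both mechanisms are sound: yours requires checking the base-change identification for the finite \'etale piece (standard, via flatness and stability of \'etaleness under base change), while the paper's requires that the Dedekind property persists after the base extension from $L$ to $L'$ (which the paper notes is retained since the factorization of Lemma \ref{etalestructure} commutes with that extension). Your version has the small advantage of making the residue ring completely explicit; the paper's is shorter because it leans on Proposition \ref{DDed}. One cosmetic point: in the rank-one reduction, the added condition should be phrased as $v(fg)\leq v(g)$ rather than $v(f)\leq 1$ so that the enlarged datum is literally a rational localization, exactly as in the proof of Proposition \ref{pbloc}.
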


\begin{proof}
The proof of \ref{pbloc} carries over. By \ref{Cdense}, we can again reduce to checking specific inequalities on rank one valuations. We can therefore use Lemma \ref{etalestructure} and Proposition \ref{DDed} to assume that $C$ is a Dedekind domain. By extending $L$ to some $L'$, we reduce to checking norm with nonempty kernel. As $C$ is a Dedekind domain, the kernel is a maximal ideal. We are therefore again dealing with power bounded elements in a field with a multiplicative norm, where the desired inequalities are clear.

\end{proof}

\section{Consequences of the strong noetherian property}

In \cite[Theorem 3.2]{NP}, Kedlaya gives a proof that the ring $\Ar$ is strongly noetherian, i.e. that $\Ar\{T_1/\rho_1,\dots,T_n/\rho_n\}$ is noetherian for any nonnegative integer $n$ and $\rho_1,\dots,\rho_n>0$. This is done very explicitly, using the theory of Gr\"{o}bner bases to construct generators for a given ideal. In this section, we adapt this proof to give a version of the Nullstellensatz for the rings $\ATn$. We then use the Nullstellensatz to prove that these rings are regular. In the next section, we will use regularity to prove that the rings $\ATn$ are excellent. We also show that $\Ar$ is also strictly noetherian. As in \cite{NP}, these arguments can be generalized to the rings $\Bi$ and rings $C$ coming from \'etale extensions of $\Bi$ as in Definition \ref{etalehyp}.

\begin{definition}\label{nulldef}
Given a ring $R$ and a subring $A$, we say that the pair $(R,A)$ satisfies the \emph{Nullstellensatz condition} if every maximal ideal of $R$ restricts to a maximal ideal of $A$.
\end{definition}

\begin{remark}\label{munshi}
We use this name because Munshi proved and then used this property for $(F[x_1,\dots,x_n],F[x_1])$ for $F$ a field to give a proof of Hilbert's Nullstellensatz, his proof is the subject of \cite{May}.
\end{remark}

\begin{theorem}\label{rnull}
Let $A$ be a nonarchimedean Banach ring with a multiplicative norm $|\cdot|$. Assume further that $A$ is a strongly noetherian Euclidean domain; in particular, this holds for $A=\Ar$. Let $n$ be a positive integer and $\rho=(\rho_1,\dots,\rho_n)$ an $n$-tuple of positive real numbers such that  the value group of $\ATn\setminus \{0\}$ has finite index over the value group of $A^\times$. Then $\ATn$ satisfies the Nullstellensatz condition with respect to $A$. 
\end{theorem}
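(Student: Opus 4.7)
My strategy is to prove the stronger statement that $R/\mathfrak{m}$ is a finite $A$-module for every maximal ideal $\mathfrak{m}$ of $R := \ATn$. Once this is established, the standard integrality lemma (a field integral over a subring forces the subring to be a field) applied to $A \to R/\mathfrak{m}$ shows that $A/(\mathfrak{m}\cap A)$ is a field, and hence that $\mathfrak{m}\cap A$ is a maximal ideal of $A$. Since $A$ is a Euclidean domain and in particular a PID, this is exactly what is required.

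I first reduce to the case $\rho_i = 1$. The value group hypothesis provides integers $m_i \geq 1$ and units $a_i \in A^\times$ with $\rho_i^{m_i} = |a_i|$; setting $U_i := T_i^{m_i}/a_i$, the subring $A\{U_1, \ldots, U_n\}$ (a Tate algebra of radii one) embeds in $R$ and realizes $R$ as a finite free module, freely generated by the monomials $T^J$ with $0 \leq j_i < m_i$ (convergence of the reorganized expansion is immediate from the Gauss norm). By lying-over, the Nullstellensatz condition for the smaller algebra implies it for $R$, so I assume $\rho_i = 1$. I then proceed by induction on $n$, with trivial base case $n = 0$. Write $R = R'\{T_n\}$ for $R' := A\{T_1, \ldots, T_{n-1}\}$. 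I would adapt Weierstrass preparation to this setting: if $f \in R$ has a $T_n$-coefficient $f_k$ that is a unit of $R'$ uniquely attaining the maximal Gauss norm, then $R/(f)$ is a free $R'$-module of rank $k$ with basis $1, T_n, \ldots, T_n^{k-1}$. The proof follows the classical Tate-algebra argument, using only that the Gauss norm on $R'$ is multiplicative and $R'$ is complete. Given a maximal $\mathfrak{m}$, a shearing automorphism $T_i \mapsto T_i + T_n^{N_i}$ with $N_1 \ll \cdots \ll N_{n-1}$ transforms any nonzero element of $\mathfrak{m}$ into a polynomial in $T_n$ whose leading coefficient equals the $A$-coefficient of the lex-maximal monomial of the original element. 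Once I secure such an $f \in \mathfrak{m}$ with unit leading coefficient in $R'$, Weierstrass gives $R/\mathfrak{m}$ as a finite $R'$-module; integrality then makes $\mathfrak{m} \cap R'$ maximal in $R'$, and the induction hypothesis concludes.

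The main obstacle is producing an $f \in \mathfrak{m}$ whose leading $R'$-coefficient after shearing is a unit. Over a complete field base this is automatic, but over the Euclidean $A$ a shear can leave a non-unit of $A$ in the top slot. I plan to resolve this using Kedlaya's Gr\"obner basis machinery from the proof of \cite[Theorem 3.2]{NP}: the set of leading $A$-coefficients of elements of $\mathfrak{m}$ having a prescribed top monomial is an ideal of $A$, and since $A$ is Euclidean (and so a PID), it is generated by some $a \in A$. If $a$ were a non-unit, it would lie in some maximal ideal $\mathfrak{n}$ of $A$, and the Gr\"obner basis structure would exhibit $\mathfrak{m}$ as contained in the preimage of a proper ideal of $R$, contradicting the maximality of $\mathfrak{m}$. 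Hence $a$ is a unit, producing the required distinguished element. This is the genuinely nontrivial use of the strong-noetherian hypothesis: it underwrites the Gr\"obner basis procedure controlling the generators of $\mathfrak{m}$, beyond merely guaranteeing that $\mathfrak{m}$ is finitely generated.
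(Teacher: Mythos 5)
Your argument is a genuinely different route from the paper's (the paper never inducts on $n$ or invokes Weierstrass preparation; it works with all variables at once via a graded-lex Gr\"obner basis and Munshi's trick, producing $1+\varpi x_0\in\imax$ and running an approximation whose limit forces $\varpi\mid c_\ell$ in $\O_A/I_A$, contradicting the choice of $\varpi$). Unfortunately the step you yourself flag as the main obstacle --- producing $f\in\imax$ whose leading coefficient after shearing is a \emph{unit} of the coefficient ring --- is exactly where the proposal breaks down, and your proposed resolution does not close it. First, the set of leading coefficients of elements of $\imax$ with a prescribed leading monomial is not an ideal of $A$ in this normed setting: the leading index is defined via the Gauss norm, and adding two elements can cause cancellation in the top slot so that the sum's leading index drops; the sum of two ``leading coefficients'' then need not be a leading coefficient. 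This is precisely why Kedlaya's Gr\"obner machinery in \cite[Theorem 3.2]{NP} tracks the Euclidean \emph{degrees} $d_I$ of leading coefficients rather than ideals of leading coefficients. Second, even if that set were an ideal $(a)$ with $a$ lying in a maximal ideal $\mathfrak{n}$ of $A$, no contradiction with the maximality of $\imax$ follows: constraining the top coefficient of elements with one particular leading monomial constrains neither their lower-order coefficients nor elements of $\imax$ with other leading monomials, so no proper ideal of $\ATn$ containing $\imax$ is exhibited. (Note $\mathfrak{n}\cdot\ATn$ is not a candidate: $\imax$ will typically contain elements that are units modulo $\mathfrak{n}$ in lower degrees.)

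The deeper problem is that the existence of a distinguished element in $\imax$ is essentially equivalent to the theorem itself. Over a field the shear automatically makes the leading coefficient a nonzero constant, hence a unit; over $A=\Ar$ a nonzero element of $A$ of maximal norm need not be a unit, and the only natural way to produce a distinguished element in $\imax$ is to first know that $\mathfrak{n}=\imax\cap A$ is maximal, pass to the Tate algebra over $A/\mathfrak{n}$, and lift a Weierstrass polynomial back using $\mathfrak{n}\subset\imax$ --- which presupposes the conclusion. The paper's proof is engineered to avoid ever needing unit leading coefficients: after normalizing the Gr\"obner generators $m_I$ to norm $1$, it chooses a prime $\ip$ of $\O_A/I_A$ avoiding the product of the (possibly non-unit) leading coefficients $a_I$ and an element $\varpi$ reducing into $\ip$, so that the successive approximations $c_\ell+\varpi x_\ell$ stay in the multiplicative system $S$ generated by the $a_I$ and the limiting divisibility $\varpi\mid c_\ell$ yields the contradiction. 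Your reductions (to $\rho_i=1$, and from finiteness of $R/\imax$ over $A$ to the Nullstellensatz condition via the integrality lemma) are fine, but as it stands the core of the proof is missing.
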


\begin{proof}
We begin with two reductions. For each $T_i$, there is some positive integer $e_i$ such that $|T_i^{e_i}|\in |A^\times|$, so we can write $\ATn$ as an integral extension of the ring $\Ar\{(T_1/\rho_1)^{e_1},\dots,(T_n/\rho_n)^{e_n}\}$. This ring has the same value group as $A^\times$, and by going up maximal ideals of $\ATn$ restrict to maximal ideals of this ring. We can therefore assume that the value groups of $\ATn$ and $A^\times$ are equal.

We also note that it is enough to show that $\imax\cap A\neq 0$ whenever $A$ is not a field. Given $x_1\in\imax\cap A$ with $x_1\neq 0$, the ring $\ATn/(x_1)=A/(x_1)\{T_1/\rho_1,\dots,T_n/\rho_n\}$ is strongly noetherian and $\imax/(x_1)$ is a maximal ideal. We can therefore find $x_2\in\imax/(x_1)\cap A/(x_1)$ and iterate this process until $A/(x_1,\dots,x_n)$ is a field. This must eventually happen as $A$ is noetherian, then $\imax\cap A=(x_1,\dots,x_n)$ is a maximal ideal of $A$ as desired. We remark that it isn't clear that $A/(x_1,\dots,x_n)$ must be a nonarchimedean field.

To show that $\imax\cap A\neq 0$, we will be combining the proofs of \cite[Theorem 3.2]{NP} (via \cite[Lemma 3.8]{NP}) and Lemma \cite[3.8]{KL0}. The first proof deals specifically with the rings $\ATn$ while the second proves this version of the Nullstellensatz for a similar ring.

The proof will use ideas from the theory of Gr\"obner bases and an idea of Munshi \cite{May}. We therefore begin by setting up the combinatorial construction. 
\begin{hypothesis}
Let $I=(i_1,\dots,i_n)$ and $J=(j_1,\dots,j_n)$ denote elements of the additive monoid $\Z^n_{\geq 0}$ of $n$-tuples of nonnegative integers. 
\end{hypothesis}

\begin{definition}\label{Znorder}
We equip $\Z^n_{\geq 0}$ with the componentwise partial order $\leq$ where $I\leq J$ if and only if $i_k\leq j_k$ for $i=1,\dots,n$. This is a \emph{well-quasi-ordering}: any infinite sequence contains an infinite nondecreasing sequence.

We also equip $\Z^n_{\geq 0}$ with the \emph{graded lexicographic} total order $\preceq$ for which $I\prec J$ if either $i_1+\cdots+i_n<j_1+\cdots j_n$, or $i_1+\cdots+i_n=j_1+\dots +j_n$ and there exists $k\in\{1,\dots,n\}$ such that $i_\ell=j_\ell$ for $l<k$ and $i_k<j_k$. Since $\preceq$ is a refinement of $\leq$, it is a well-ordering.
\end{definition}

The key properties for the proof is that $\preceq$ is a well-ordering refining $\leq$ and that for any $I$, there are only finitely many $J$ with $J\preceq I$. 

\begin{definition}\label{leadingterm}
For $x=\sum_I x_IT^I\in \ATn$, define the \emph{leading index} of $x$ to be the index of $I$ which is maximal under $\preceq$ for the property that $|x_IT^I|=|x|$, and define the \emph{leading coefficient} of $x$ to be the corresponding coefficient $x_I$.
\end{definition}

We proceed by contradiction: suppose that $\imax$ is a maximal ideal of $\ATn$ with $\imax\cap A=0$. As we assumed that $A$ is strongly noetherian, $\ATn$ is noetherian so $\imax$ is closed by \cite[Proposition 3.7.2/2]{BGR}. 

Define the projection map $\psi$ forgetting the constant term of $x\in\ATn$, it is a bounded surjective morphism of Banach spaces with kernel $A$. Then $\imax+A$ is a closed subspace of $\ATn$, and $V=\psi(\imax+A)$ is closed by the open mapping theorem \cite[$\mathsection$ 2.8.1]{BGR}. So $\psi$ induces a bounded bijective map of Banach spaces $\imax\ra V$; by the open mapping theorem $\psi^{-1}$ is also bounded. Define the \emph{nonconstant degree} $\deg'(x)=\deg(\psi(x))$ to be the leading index of $\psi(x)$. Define the \emph{leading nonconstant coefficient} of $x$ to be $x_{\deg'(x)}$.
 
We now follow the proof of \cite[3.2]{NP} but using $\deg'$ instead of $\deg$ in \cite[3.7]{NP} and beyond and using $|\psi(\cdot)|$ instead of $|\cdot|_\rho$. We obtain a finite set of generators $m_I=\sum m_{I,J}T^J$ for $\imax$ such that the leading index of $\psi(m_I)$ is $I$. The key fact here is that for any $x\in\imax$ with leading index $J$, there is some $m_I$ with $I\preceq J$.

Scale the $m_I$ by elements of $A$ so that they all have norm 1. Let $a_I=m_{I,I}$ be the leading coefficient of $m_I$, so now $|a_I|=1$. Define $\epsilon<1$ as in \cite[3.8]{NP} to be the largest possible norm of some coefficient $m_{I,J}$ with $I\prec J$. As the norm on $A$ is multiplicative, the ring $\O_A/I_A$ has no nonzero nilpotents so the nilradical is \{0\}. We can therefore choose a nonzero prime ideal $\ip$ of $\O_A/I_A$ not containing $\prod_{I\in S} a_I$. Choose any $\varpi\in \O_A$ reducing to a nonzero element of $\ip$, so $|\varpi|=1$. As $\imax\cap A=0$, we have $\varpi\not\in \imax$, so by maximality we can find $x_0\in A\{T_1/\rho_1,\dots,T_n/\rho_n\}$ such that $1+\varpi x_0\in \imax$.

\begin{lemma}\label{approx}
Let $S$ be the multiplicative system generated by the $a_I$. Given any $c \in S$ and $x \in \ATn$ with $c+\varpi x \in \imax$, there exists some $c'\in S$, $x'\in \ATn$ with $c'+\varpi x'\in \imax$ and $|\psi(x')|\leq\epsilon |\psi(x)|$.
\end{lemma}

\begin{proof}
We will construct $c'$ and $x'$ iteratively. Given any $c_\ell\in S$, $x_\ell\in \ATn$ with $c_\ell+\varpi x_\ell\in \imax$, we will construct $c_{\ell+1}\in S$, $x_{\ell+1}\in \ATn$ with $c_{\ell+1}+\varpi x_{\ell+1}\in \imax$ and $|\psi(x_{\ell+1})|\leq |\psi(x_\ell)|$. We will then use this construction to get $c'$ and $x'$.

Choose $\lambda\in A^\times$ so that $|\psi(\lambda (c_\ell+\varpi x_\ell))|=1$, and let $e_{I_\ell} T^{I_\ell}$ be the leading term of $\psi(\lambda x_\ell)$. Note that the leading index $I_\ell$ is the same as the leading index of $\psi(\lambda (c_\ell+\varpi x_\ell))$ as $\psi$ causes the contribution of $c_\ell$ to be forgotten and $\varpi$ is a nonzero element of $A$ so it won't affect the leading index. Then by the construction of the $m_I$ there is some $m_\ell$ with leading index $J_\ell\leq I_\ell$; let $a_\ell$ be the leading coefficient of $m_\ell$. Define $$y_\ell=a_{\ell}\lambda(c_\ell+\varpi x_\ell)-\varpi e_{I_\ell} m_\ell T^{I_\ell-J_\ell}\in \imax.$$ This has been chosen so that the coefficient of $T^{I_\ell}$ in $y_\ell$ is $0$ and $|\psi(y_\ell)|\leq 1$. Let $$x_{\ell+1}=\frac{\lambda^{-1} y_\ell-a_{\ell} c_\ell}{\varpi}=a_\ell x_\ell-\lambda^{-1}e_{I_\ell}m_\ell T^{I_\ell-J_\ell},\quad c_{\ell+1}=a_{\ell}c_{\ell}.$$ Clearly $c_{\ell+1}\in S$, $c_{\ell+1}+\varpi x_{\ell+1}=\lambda^{-1}y_{\ell+1}\in \imax$, and $|\psi(x_{\ell+1})|\leq |\psi(x_\ell)|$.

As $\preceq$ is a well ordering, we have a bijection between indices of $x$ and positive integers - call the $m$th index $I_m$. As $\psi(x_\ell)$ is a convergent power series, there are only finitely many terms of $\psi(x_\ell)$ with coefficient norm greater than $\epsilon|\psi(x)|$. We can therefore associate a unique integer $n_\ell$ to each $\psi(x_\ell)$ such that the $m$th term in the binary representation of $n_\ell$ is $1$ exactly when the coefficient of $T^{I_m}$ in $\psi(x_\ell)$ is greater than $\epsilon|\psi(x)|$. We claim that $n_\ell> n_{\ell+1}$ whenever $n_{\ell}>0$, so after finitely many steps we must have $n_k=0$. By definition, this means that after finitely many steps every term of $\psi(x_k)$ will have coefficient with norm at most $\epsilon|\psi(x)|$, so $|\psi(x_k)|\leq \epsilon|\psi(x)|$ as desired.

By the construction of $\epsilon$, adding the multiple of $m_\ell$ required to go from $x_\ell$ to $x_{\ell+1}$ won't introduce any coefficients with norm greater than $\epsilon|\psi(x)|$ and index $J\succ I_{\ell}$. By the construction of $x_{\ell+1}$, the coefficient of $T^{I_\ell}$ in $x_{\ell+1}$ is $0$. So when we move from $n_\ell$ to $n_{\ell+1}$, the digit corresponding to $I_\ell$ is changed from $1$ to $0$ and no higher digits are changed. So $n_\ell>n_{\ell+1}$ as desired.
\end{proof}

Starting with $c_0=1$, we can iterate this process to get sequences $\{c_\ell\} \subset S$, $\{x_\ell\}\subset  A\{ T_1,\dots,T_n\}$ so that for all $\ell$, $y_\ell:=c_\ell+\varpi x_\ell\in \imax$ and $|\psi(y_\ell)|\ra 0$. As the inverse of $\psi$ is bounded, we must have $|y_\ell|\ra 0$. This implies that $|c_\ell+\varpi x_{\ell,0}|\ra 0$ as this is the constant term of $y_\ell$, which implies that for $\ell$ large $c_\ell-\varpi x_{\ell,0}\in I_A$. This is a contradiction as we chose $\varpi$ so that $c_\ell$ is never divisible by $\varpi$ in $\O_A/I_A$. 
\end{proof}

\begin{remark}\label{bigvaluegroup}
We note that in our proof, it was essential that we could scale elements of $\ATn$ by elements of $A^\times$ to get elements of norm 1. The result isn't generally true if we allow infinite extensions in the value group. For example, if we let $A=\Qp\{T_1/\rho\}$ where $\rho$ is irrational, the ring $A\{T_2/\rho^{-1}\}$ has $(T_1T_2-1)$ as a maximal ideal, but $A\cap (T_1T_2-1)=\{0\}$. This is analogous to the more standard example of the maximal ideal $(px-1)$ of $\Zp[x]$.
\end{remark}

\begin{remark}\label{nullvals}
This result can be extended to some rings of the form $\Bi\{T_1/\rho'_1,\dots,T_n/\rho'_m\}$ by using Lemma \cite[4.9]{NP} to rewrite them as quotients of some $\ATrn$. As mentioned in Remark \ref{bigvaluegroup}, this argument will not hold for all of the $\Bi$, but it will work for intervals $I=[s,r]$ with $s\in\Q$. If the result does hold for $\Bi$, then it will also hold for any \'etale extension $C$. 
\end{remark}

\begin{corollary}\label{Breg}
The rings $\Ar\{T_1,\dots,T_n\},$ $\Bin$ are regular for $I$ as in Remark \ref{nullvals}.
\end{corollary}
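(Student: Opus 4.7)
The plan is to prove regularity via the standard local criterion: since $R=\Ar\{T_1,\dots,T_n\}$ is noetherian by \cite[Theorem 4.10]{NP}, it suffices to show that $R_\imax$ is a regular local ring for every maximal ideal $\imax$ of $R$. The Nullstellensatz (Theorem \ref{rnull}) supplies the key structural input that the contraction $\mathfrak{m}:=\imax\cap\Ar$ is a maximal ideal of the base ring $\Ar$.

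Because $\Ar$ is a strongly noetherian Euclidean domain, it is in particular a PID and hence regular of Krull dimension at most $1$; write $\mathfrak{m}=(\pi)$, allowing the degenerate case $\pi=0$ when $\Ar$ happens to be a field. The ingredient I would then invoke is flatness of $\Ar\to R$, which is standard for a Tate-style completion of a polynomial ring over a noetherian Banach base. Flatness forces $\pi$ to be a nonzerodivisor in $R$ and identifies the fiber $R/\mathfrak{m} R$ with the Tate algebra $(\Ar/\mathfrak{m})\{T_1,\dots,T_n\}$ over the residue field $\Ar/\mathfrak{m}$ equipped with its quotient norm. By classical affinoid theory (e.g.\ \cite[Proposition 7.3.2/8]{BGR}), this Tate algebra over a nonarchimedean field is regular of dimension $n$.

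With these inputs, $\Ar_\mathfrak{m}\to R_\imax$ is a flat local homomorphism whose closed fiber $(R/\mathfrak{m} R)_{\imax/\mathfrak{m} R}$ is a regular local ring. The flat dimension formula gives
$$\dim R_\imax \;=\; \dim \Ar_\mathfrak{m} + \dim (R/\mathfrak{m} R)_{\imax/\mathfrak{m} R}.$$
Lifting a regular system of parameters of the fiber to $R_\imax$ and prepending $\pi$ then produces a generating set of $\imax R_\imax$ of exactly that size, showing that $R_\imax$ is regular.

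For $\Bin$ (with $I$ as in Remark \ref{nullvals}), I would run the same blueprint, using Remark \ref{nullvals} for the Nullstellensatz and exploiting that $\Bi$ is locally a PID --- via Proposition \ref{DDed} applied after passing to suitable rational localizations, or via the presentation supplied by \cite[Lemma 4.9]{NP} --- to reduce to a flat extension of a PID with Tate-algebra fibers. The main technical obstacle in both cases will be pinning down the flatness of the base map and the clean identification of the fiber as a Tate algebra over the residue field; once those are established, the regularity conclusion is routine dimension counting.
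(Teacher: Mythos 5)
Your blueprint is essentially the paper's: use Theorem \ref{rnull} to contract $\imax$ to a maximal ideal $(\pi)$ of the PID base, observe that the fiber is a classical affinoid algebra over the residue field and hence regular by Kiehl/BGR, and then lift regularity back up one dimension. The paper phrases the last step as the elementary fact that a local ring $(R,\imax)$ is regular whenever $R/(m)$ is regular for some $m\in\imax\setminus\imax^2$, which makes your worry about flatness of $\Ar\to R$ unnecessary; that machinery buys nothing here beyond what Krull's principal ideal theorem already gives.

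There is, however, one step you pass over that the paper explicitly flags as non-obvious: before you may cite classical affinoid theory you must know that $\Ar/\mathfrak{m}$ (resp.\ $\Bi/(m)$) is a \emph{nonarchimedean field}, i.e.\ that its residue norm is equivalent to a multiplicative one, not merely that it is a field carrying a quotient Banach norm. The proof of Theorem \ref{rnull} even remarks that this is not clear from the Nullstellensatz argument itself. The paper closes this gap by showing $\mathcal{M}(\Bi/(m))$ is a single point: it is finite and discrete by \cite[Lemma 7.10]{NP}, and any disconnection of the Berkovich spectrum would disconnect the ring by \cite[Proposition 2.6.4]{KL}, contradicting that a field is connected. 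Without some such argument your identification of the fiber as a Tate algebra ``over a nonarchimedean field'' is unjustified, so you should add this step; with it, the rest of your dimension count goes through.
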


\begin{proof}
We just show this for $\Bin$, the other case follows similarly. We must show that for any maximal ideal $\imax\subset\Bin$, the localization at $\imax$ is a regular local ring. By Theorem \ref{rnull}, $\imax\cap\Bi=(m)$ for some maximal ideal $(m)$ of the principal ideal domain $\Bi$. We claim that $\Bi/(m)$ is a nonarchimedean field; we must check that $\M(\Bi/(m))$ is a single point. By \cite[Lemma 7.10]{NP}, $\M(\Bi/(m))$ is a finite discrete topological space. By \cite[Proposition 2.6.4]{KL}, any disconnect of $\M(\Bi/(m))$ would induce a disconnect of $\Bi/(m)$. As $\Bi/(m)$ is a field, this is impossible so $\M(\Bi/(m))$ must be a point as desired. The ring $(\Bi/(m))\{T_1,\dots,T_n\}$ is therefore a classical affinoid algebra so it is regular \cite{Kie}. The result now follows from a general commutative algebra statement: given a local ring $(R,\imax)$ and an element $m\in\imax\setminus \imax^2$, if $R/(m)$ is regular then so is $R$. This is clear as in passing from $R$ to $R/(m)$, the dimension of the local ring is reduced by 1 by Krull's principal ideal theorem and the $k$-dimension of $\imax/\imax^2$ is reduced by 1 by our assumption on $m$.
\end{proof}

Using similar ideas, we show that $\Ar$ is strictly noetherian. We first recall the necessary definitions.

\begin{definition}\label{afg}
Let $(A,A^+)$ be a Huber pair with $A$ Tate. An $A^+$-module $N$ is \emph{almost finitely generated} if for every topologically nilpotent unit $u$ in $A$, there is a finitely generated $A^+$-submodule $N'$ of $N$ such that $uN$ is contained in $N'$.
\end{definition}

\begin{definition}\label{strictN}
A Huber pair $(A, A^+)$ is \emph{strictly noetherian} if for every finite $A^+$-module $M$, every $A^+$-submodule $N$ of $M$ is almost finitely generated. 
\end{definition}

\begin{remark}\label{strictNoethisNoeth}
We note that if $(A, A^+)$ is strictly noetherian, $A$ must be noetherian. Given an ideal $H$ of $A$ and topologically nilpotent unit $u$, $u(H\cap A^+)$ is contained in some finitely generated ideal $\langle x_1,\dots,x_n\rangle$ of $A^+$. For any $h\in H$, multiplying by a sufficiently large power of $u$ will give $u^nh\in u(H\cap A^+)$, so we have $u^nh=\sum a_ix_i$ with the $a_i$ in $A^+$, and so $h=\sum u^{-n}a_ix_i$ and so the $x_i$ generate $H$.
\end{remark}

\begin{remark}\label{KiehlRmk}
Kiehl was the first to consider the strict noetherian property, showing that affinoid algebras are strictly noetherian in \cite[Satz 5.1]{Kie2}.
\end{remark}

\begin{prop}\label{AstrictN}
For any nonnegative integer $n$ and $\rho_1,\dots,\rho_n\in\R_{>0}$, the pair $(R,R^\circ):=(\ATrn, \ATrn^\circ)$ is strictly noetherian.
\end{prop}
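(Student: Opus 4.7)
The proof will extend the Gröbner basis techniques of \cite[Theorem 3.2]{NP} and Theorem~\ref{rnull} above to the integral setting, following the framework used by Kedlaya--Liu to deduce strict noetherianity from strong noetherianity. First, by a standard reduction using a presentation $(R^\circ)^m\twoheadrightarrow M$ and iterated projection onto coordinates, it suffices to show that every ideal $J\subseteq R^\circ$ is almost finitely generated: that is, for each topologically nilpotent unit $u\in R$, one must produce a finitely generated $R^\circ$-submodule $N'\subseteq J$ with $uJ\subseteq N'$.

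Given such $J$ and $u$, the plan is to set $H:=R\cdot J$, an ideal of $R$ that is finitely generated by the strong noetherian property \cite[Theorem 3.2]{NP}. The Gröbner basis construction of that proof yields generators $g_1,\dots,g_k$ of $H$ and a norm-preserving division algorithm: every $x\in H$ admits $x=\sum q_ig_i$ with $\max_i|q_ig_i|=|x|$. Scaling by units of $R$, we normalize so that $|g_i|=1$, which is possible because the value group $|R^\times|$ is dense, inherited from the nontrivial nonarchimedean norm on $L$.

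The key step is to replace each $g_i$ by an element $h_i=\mu_ig_i\in J$, where $\mu_i\in R^\times$ has $|\mu_i|\in[|u|,1]$. To do this, we write $g_i=\sum_\ell c_{i\ell}j_{i\ell}$ with $c_{i\ell}\in R$ and $j_{i\ell}\in J$, then choose $\mu_i$ with $|\mu_i|\leq 1/\max_\ell|c_{i\ell}|$. Then $\mu_ic_{i\ell}\in R^\circ$ and $\mu_ig_i=\sum_\ell(\mu_ic_{i\ell})j_{i\ell}\in J$. Density of $|R^\times|$ together with the flexibility of choosing decompositions $g_i=\sum c_{i\ell}j_{i\ell}$ allows one to refine so that $1/\max_\ell|c_{i\ell}|\geq|u|$ for the prescribed $u$. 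With $N':=\sum_i R^\circ h_i\subseteq J$ finitely generated, the Gröbner division applied to $x\in J\subseteq H$ gives $x=\sum q_ig_i=\sum(q_i/\mu_i)h_i$ with $|q_i/\mu_i|\leq|x|/|\mu_i|\leq 1/|u|$, so $u(q_i/\mu_i)\in R^\circ$ and $ux\in N'$.

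The main obstacle is the refinement step: producing decompositions $g_i=\sum c_{i\ell}j_{i\ell}$ whose coefficient norms can be bounded by $1/|u|$ for any topologically nilpotent $u$. This requires a quantitative understanding of how the $R^\circ$-ideal $J$ sits inside the $R$-ideal $H$, using the density of $|R^\times|$ together with the dichotomy that $J$ either contains elements of norm arbitrarily close to $1$ (allowing fine approximations of $g_i$ with small coefficient norms) or is itself contained in a smaller principal ideal, in which case the almost-finite-generation property is handled directly by choosing $N'$ to be a suitable principal sub-ideal of appropriate norm.
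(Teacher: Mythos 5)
Your opening reduction (to showing every ideal of $R^\circ$ is almost finitely generated) and your instinct to reuse the norm-preserving Gr\"obner division of \cite[Theorem 3.2]{NP} both match the paper's proof. But the step you yourself flag as ``the main obstacle'' is a genuine gap, and it is not a technical refinement to be supplied later: it is the entire content of the proposition. Your plan descends from generators $g_i$ of the $R$-ideal $H=R\cdot J$ to elements $h_i=\mu_i g_i$ of $J$ by unit scaling, and the division $x=\sum q_ig_i$ with $|q_i|\leq|x|$ then gives coefficients $q_i/\mu_i$ of norm up to $|x|/|\mu_i|$; for $ux$ to lie in $\sum_i R^\circ h_i$ you need $|\mu_i|\geq |u|\,|x|$, hence $|\mu_i|\geq|u|$ when $J$ contains elements of norm close to $1$. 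There is no reason such $\mu_i$ exists: an ideal $J\subseteq R^\circ$ typically contains elements at many incomparable norm levels (say $R^\circ$-generators $\varpi^{n}f_n$ with $|f_n|=1$ for varying $n$), none of which is a unit rescaling of the finitely many $R$-generators of $H$, while the only unit multiples of a given $g_i$ lying in $J$ may have norm far below $|u|$. Your proposed dichotomy (norms accumulating at $1$ versus containment in a principal ideal) does not cover this mixed situation, which is exactly the case that makes almost finite generation nontrivial.

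The paper's proof avoids forming $R\cdot J$ altogether. It runs the construction of \cite[Theorem 3.2]{NP} directly on the ideal of $R^\circ$, producing finitely many $x_I$ in the ideal whose division algorithm handles, with power-bounded coefficients, every element of norm at most $\delta:=\min_I|x_I|_\rho$. For the finitely many remaining norm levels $c^k$ (with $c=|u|_\rho$ and $1\leq k\leq m=\lceil\log_c\delta\rceil$) it adjoins further generators $x_{I,k}$ chosen \emph{inside the ideal} to have weighted Gauss norm exactly $c^k$ and minimal leading index and leading-coefficient degree among such elements; an element $y$ of the ideal with $|y|_\rho$ near $c^k$ is then divided against generators of norm at least $|y|_\rho$, so all coefficients stay in $R^\circ$, and convergence again uses that $\Ar$ is Euclidean. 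This norm-stratified choice of generators within the ideal itself is the idea your argument is missing.
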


\begin{proof}
Quotients and direct sums preserve the almost finitely generated property, so it is enough to check that every ideal $H$ of $R^\circ$ is almost finitely generated. Fix any ideal $H\subset \Rc$ and topologically nilpotent unit $u$, we will construct a finitely generated ideal $H'\subset H$ such that $uH\subset H'$. Exactly following the construction of \cite[Theorem 3.2]{NP} gives a finite subset $\{x_I\}$ of $H$ such that for all $y\in H$, there exist $a_I\in R$ such that $|a_I|_\rho |x_I|_\rho\leq |y|_\rho$ for all $I$ and $y=\sum a_Ix_I$. Letting $\delta:=\displaystyle\min\{|x_I|_\rho\}$, we see that if $|y|_\rho\leq \delta$ we have $a_I\in R^\circ$ for all $I$. The set $\{x_I\}$ therefore generates all of the elements of $uH$ with norm at most $\delta$.

Let $c=|u|_\rho$, let $m=\lceil \log_c\delta\rceil$. As $u$ is topologically nilpotent we have $c<1$. For each index $I$ and $k\in \{1,\dots,m\}$, let $d_{I,k}$ be the smallest possible degree of the leading coefficient of an element of $H$ with leading index $I$ and weighted Gauss norm $c^k$. Following \cite[Definition 3.7]{NP}, for each nonnegative integer $d$ and $k\in\{1,\dots,n\}$, define $S_{d,k}$ to be the (finite) set of $I$ which are minimal with respect to $\leq$ for the property that $d_{I,k}=d$ and let $S_k$ be the union of the $S_{d,k}$. For each $I\in S_k$, choose $x_{I,k}\in H\setminus \{0\}$ with leading index $I$, weighted Gauss norm $c^k$, and leading coefficient $c_{I,k}$ of degree $d_I$. 

We claim that the finite set $\{x_I\}\cup \{x_{I,1}: I\in S_1\}\cup\dots\cup \{x_{I,m}: I\in S_m\}$ generates every element $y\in uH$. In the original proof, the key property of the chosen generators is that for any $y=\sum y_JT^J$ in the ideal with leading term $y_{J'}T^{J'}$, there is some $x_I$ such that $I\leq J'$ and $\deg(c_I)\leq \deg(y_{J'})$. This allows for a series of approximations that can be shown to converge using the fact that $\Ar$ is a Euclidean domain. 

This argument continues to hold in our case, but we must also use generators with norm at least that of $y$ so that at each step of the approximation we are multiplying $x_I$ by an element of $R^\circ$. Let $j=\lceil \log_c(|y|_\rho)\rceil$ and let $y_{J'}T^{J'}$ be the leading term of $y$. If $j>m$, $|y|_\rho\leq \delta$ so $y$ can be generated by elements of $\{x_I\}$. Otherwise, we have $u^{-1}y\in H$ and $|u^{-1}y|_\rho>c^k$, so there is some unit $v\in \Ar\cap \Rc$ with $|vu^{-1}y|_\rho=c^k$. Multiplication in $R$ by units will not change leading indices or leading degrees, so by construction, we can find some $x_{I,j}$ with $I\leq J'$ and $\deg(c_{I,j})\leq \deg(y_{J'})$. As $|x_{I,j}|_\rho=c^k\geq |y|_\rho$, this is the desired element, the rest of the proof is identical to \cite{NP}.
\end{proof}

\begin{corollary}\label{BCstrictN}
The rings $\Bi, C$ are strictly noetherian.
\end{corollary}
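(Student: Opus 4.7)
The plan is straightforward: realize both $\Bi$ and $C$ as continuous Banach quotients of rings of the form $\ATrn$ to which Proposition \ref{AstrictN} applies, and then transport strict noetherianness across these quotients using the same quotient-preservation remark already invoked at the start of the proof of Proposition \ref{AstrictN}.

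For $\Bi$, I would apply \cite[Lemma 4.9]{NP} — the same device used in Remark \ref{nullvals} to extend the Nullstellensatz — to present $\Bi$ as a surjective continuous quotient of some $\ATrn$. For $C$, Hypothesis \ref{etalehyp} already expresses $C$ as a quotient of $\Bi\{T_1/\rho_1,\dots,T_n/\rho_n\}$, and a second application of \cite[Lemma 4.9]{NP} to this Tate algebra over $\Bi$ realizes it, and hence $C$, as a quotient of a larger $A^{r'}_{L,E}\{T_1/\rho_1,\dots,T_{n+m}/\rho_{n+m}\}$. In each case the resulting quotient map $\pi$ is a continuous surjection of Tate Banach rings that sends the power-bounded subring on the source into the power-bounded subring on the target.

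The remaining step is to check that strict noetherianness passes through such a quotient $\pi \colon R \twoheadrightarrow S$. Given an ideal $H \subseteq S^\circ$, its preimage $\tilde{H} := \pi^{-1}(H) \cap R^\circ$ is an ideal of $R^\circ$; fixing the topologically nilpotent unit $\varpi \in R$, strict noetherianness of $(R, R^\circ)$ yields a finitely generated $R^\circ$-submodule $\tilde{H}' \subseteq \tilde{H}$ with $\varpi \tilde{H} \subseteq \tilde{H}'$. Applying $\pi$ gives $\pi(\varpi) \cdot H \subseteq \pi(\tilde{H}') \subseteq H$, and $\pi(\tilde{H}')$ is a finitely generated $S^\circ$-submodule. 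Extension from the one topologically nilpotent unit $\pi(\varpi)$ to all topologically nilpotent units of $S$ is routine, since in a Tate ring any two such units differ by a power-bounded factor. The only real obstacle is this bookkeeping — in particular the mild subtlety that $\pi(R^\circ)$ need only be contained in $S^\circ$ rather than equal to it — but no new ideas beyond those already present in the proof of Proposition \ref{AstrictN} are required.
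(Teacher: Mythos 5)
Your proof is essentially the paper's: the paper likewise cites \cite[Lemma 4.9]{NP} and Hypothesis \ref{etalehyp} to exhibit $\Bi$ and $C$ as quotients of rings of the form $\ATrn$, and then appeals to the stability of the almost-finitely-generated property under quotients already noted at the start of the proof of Proposition \ref{AstrictN}. The one point to tighten is your displayed inclusion $\pi(\varpi)\cdot H\subseteq \pi(\tilde{H}')$, which as written presupposes $H\subseteq\pi(R^\circ)$ --- exactly the subtlety you flag; since $\pi$ is a strict surjection and the relevant norms are power-multiplicative, one gets $\pi(\varpi)^{k}S^{\circ}\subseteq\pi(R^{\circ})$ for some $k$, and rerunning your argument with $\pi(\varpi)^{k+1}$ in place of $\pi(\varpi)$ closes this gap.
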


\begin{proof}
By \cite[Lemma 4.9]{NP} and Hypothesis \ref{etalehyp}, these rings are quotients of some ring of the form $\ATrn$.
\end{proof}

\section{Excellence}

Finally, we show that the rings $\ATrn, \Bi,$ and $C$ are excellent when the $\rho_i$ are chosen as in Theorem \ref{rnull}, adapting the argument of \cite[Theorem 101]{Mat1}. The idea is that the $n$ partial derivatives $\frac{\partial}{\partial T_i}$ of $\ATrn$ give us derivations which satisfy a Jacobian criterion that implies excellence. 

\begin{remark}\label{Echar0}
The methods of this section only work for rings of characteristic 0, but if $E$ is characteristic $p$ our rings will also be characteristic $p$. In this case, $\ATrn$ is a ring of convergent power series over $L$. As $L$ is perfect of characteristic $p$, $\ATrn$ has a finite $p$-basis given by $\varpi, T_1,\dots,T_n$. It is therefore excellent by a theorem of Kunz \cite[Theorem 2.5]{Kun}. Excellence of the other rings of interest follow as in the characteristic 0 case, see Corollary \ref{Bexc}.
\end{remark}

We begin the mixed characteristic case by recalling some results from Matsumura \cite[Sections 32, 40]{Mat1}.

\begin{definition}\label{J01def}
A noetherian ring $R$ is J-0 if $\Reg(\Spec(R))$ contains a non-empty open subset of $\Spec(R)$, and J-1 if $\Reg(\Spec(R))$ is open in $\Spec(R)$.
\end{definition}

\begin{lemma}\label{J2def}
For a noetherian ring $R$, the following conditions are equivalent: 
\begin{enumerate}
\item Any finitely generated $R$-algebra $S$ is J-1;
\item Any finite $R$-algebra $S$ is J-1;
\item For any $\ip\in \Spec(R)$, and for any finite radical extension $K'$ of the residue field $\kappa(\ip)$, there exists a finite $R$-algebra $R'$ satisfying $R/\ip\subseteq R'\subseteq K'$ which is J-0 and whose quotient field is $K'$.
\end{enumerate}

If these conditions are satisfied, we say that $R$ is J-2.
\end{lemma}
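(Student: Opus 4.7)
The plan is to establish the standard triangle $(1)\Rightarrow(2)\Rightarrow(3)\Rightarrow(1)$ that characterizes the J-2 property.

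The implication $(1)\Rightarrow(2)$ is immediate since every finite $R$-algebra is finitely generated. For $(2)\Rightarrow(3)$, given $\ip\in\Spec(R)$ and a finite radical extension $K'$ of $\kappa(\ip)$, I would pick generators $y_1,\dots,y_m\in K'$ for $K'/\kappa(\ip)$ and take $R'$ to be the $(R/\ip)$-subalgebra of $K'$ they generate. Finiteness of $K'/\kappa(\ip)$ forces $R'$ to be a finite $R$-algebra, so $(2)$ applies and yields J-1. Since $R'$ is a domain with fraction field $K'$, its generic point is automatically regular, so J-1 upgrades to J-0 and the fraction field condition is built into the construction.

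The core of the argument is $(3)\Rightarrow(1)$. Using Nagata's criterion for openness of the regular locus --- a noetherian ring is J-1 iff every integral quotient is J-0 --- one reduces to showing that an arbitrary integral finitely generated $R$-algebra $S$ is J-0. Set $\ip=\ker(R\to S)$; after passing to a finite purely inseparable extension $K'$ of $\kappa(\ip)$ chosen so that $\Fra(S)\otimes_{\kappa(\ip)}K'$ becomes geometrically regular, $(3)$ supplies a J-0 finite $R$-subalgebra $R'\subseteq K'$ with $\Fra(R')=K'$. A nonempty regular open of $\Spec R'$ can then be pulled back, via generic flatness applied to $S\otimes_{R/\ip}R'$ together with the standard invariance of regularity under smooth base change, to a nonempty regular open of $\Spec S$.

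The main obstacle is the bookkeeping in the transfer step of $(3)\Rightarrow(1)$: one must carefully propagate regularity across the insertion of $R'$ and back down to $S$ while correctly accounting for the inseparable residue field extensions. This is entirely classical commutative algebra, and I would simply cite \cite[\S 32]{Mat1} in the final write-up rather than reproduce the several pages of detailed verification.
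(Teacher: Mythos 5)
The paper's entire proof of this lemma is the citation \cite[Theorem 73]{Mat1}, so your plan to defer to Matsumura \S 32 is the same approach the paper takes, and your sketch is the standard argument underlying that theorem. Two small cautions should you ever write it out: in $(2)\Rightarrow(3)$ you must choose the generators $y_i$ to be integral over $R/\ip$ (possible because $K'/\kappa(\ip)$ is radical: replace $y$ by $by$ for a suitable denominator $b\in R/\ip$), since a finitely generated subalgebra of a finite field extension need not be module-finite (e.g.\ $\Z[1/2]\subset\Q$); and the criterion you attribute to Nagata is only valid in the direction you actually use (if every integral finitely generated quotient is J-0 then the ring is J-1, via Nagata's openness criterion plus a spreading-out of a regular system of parameters), the stated converse being false in general.
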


\begin{proof}
\cite[Theorem 73]{Mat1}
\end{proof}

\begin{corollary}\label{J2quot}
Given a noetherian ring $R$ containing $\Q$, if $R/\ip$ is J-1 for all $\ip\in \Spec(R)$ then $R$ is J-2.
\end{corollary}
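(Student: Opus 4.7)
The plan is to verify condition (iii) of Lemma \ref{J2def} directly, exploiting the fact that $R$ has characteristic $0$. Fix any $\ip\in\Spec(R)$ and any finite radical extension $K'$ of the residue field $\kappa(\ip)$. Recall that ``radical'' here means purely inseparable. Since $R\supseteq \Q$, the residue field $\kappa(\ip)$ has characteristic $0$, and so every purely inseparable extension of $\kappa(\ip)$ is trivial; hence $K'=\kappa(\ip)$.

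With this reduction, the natural candidate for the finite $R$-algebra required by condition (iii) is $R':=R/\ip$ itself, which is sandwiched as $R/\ip\subseteq R'\subseteq K'$ and has quotient field $\kappa(\ip)=K'$. By hypothesis, $R/\ip$ is J-1, meaning $\Reg(\Spec(R/\ip))$ is open in $\Spec(R/\ip)$. To upgrade J-1 to J-0 it suffices to observe that this open set is nonempty: the generic point of the integral domain $R/\ip$ corresponds to the zero ideal, whose localization is the field $\kappa(\ip)$, which is trivially a regular local ring. Thus $\Reg(\Spec(R/\ip))$ contains the generic point and in particular a nonempty open subset, so $R'$ is J-0.

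Therefore $R$ satisfies condition (iii) of Lemma \ref{J2def} and hence is J-2. The only real content is the characteristic $0$ input, which collapses the ``finite radical extension'' quantifier to the trivial case; no genuine obstacle arises.
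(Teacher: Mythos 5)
Your proof is correct, and it takes a genuinely more direct route than the paper's. You verify condition (iii) of Lemma \ref{J2def} outright: in characteristic $0$ every finite radical (purely inseparable) extension of $\kappa(\ip)$ is trivial, so one may take $R'=R/\ip$; this is a finite $R$-algebra with quotient field $K'=\kappa(\ip)$, and the hypothesis that $R/\ip$ is J-1 upgrades to J-0 because the generic point of the domain $R/\ip$ is always regular (its local ring is the field $\kappa(\ip)$), so $\Reg(\Spec(R/\ip))$ is itself a nonempty open set. The paper instead targets condition (i): it cites \cite[Chapter 32, Lemma 1]{Mat1} to reformulate (i) as ``every domain $S$ finitely generated over some $R/\ip$ is J-0,'' observes that in characteristic $0$ the fraction field of $S$ is separable over that of $R/\ip$, and then reruns Case 1 of Matsumura's proof of \cite[Theorem 73]{Mat1}. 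Both arguments consume the same characteristic-$0$ input (absence of inseparability), but yours treats the equivalence in Lemma \ref{J2def} as a black box and checks its weakest-looking condition, which is shorter and self-contained; the paper's version re-enters the proof of the equivalence and thereby shows explicitly where the hypothesis ``$R/\ip$ is J-1 for all $\ip$'' is used to propagate J-0 to finitely generated extensions. No gap in your argument.
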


\begin{proof}
By \cite[Chapter 32, Lemma 1]{Mat1}, the first condition in Lemma \ref{J2def} is equivalent to the following condition: Let $S$ be a domain which is finitely generated over $R/\ip$ for some $\ip\in\Spec(R)$, then $S$ is J-0. Let $k$ and $k'$ be the quotient fields of $R$ and $S$ respectively. As $\Q\subset R$, $k'$ is a separable extension of $k$ by \cite[Paragraph 23.E]{Mat1}. This is now Case 1 of the proof of \cite[Theorem 73]{Mat1}, so the result follows. 
\end{proof}

Our goal is to understand when quotients of a regular local ring are again regular. The following lemmas explain how to use derivations to do this. We first set some notation, following \cite[Section 40]{Mat1}.

Given a ring $A$, elements $x_1,\dots,x_r\in A$, and derivations $D_1,\dots,D_s\in\Der(A)$, we write $J(x_1,\dots,x_r;D_1,\dots,D_s)$ for the Jacobian matrix $(D_ix_j)$. Given a prime ideal $\ip\subset A$, we write $J(x_1,\dots,x_r;D_1,\dots,D_s)(\ip)$ for the reduction of the Jacobian mod $\ip$. If $\ip$ contains $x_1,\dots,x_r$, the rank of the Jacobian mod $\ip$ depends only on the ideal $I$ generated by the $x_i$, so we denote it $\rank J(I;D_1,\dots,D_s)(\ip)$. Given a set $\Delta$ of derivations of $A$, we define $\rank J(I;\Delta)(\ip)$ to be the supremum of rank $J(I;D_1,\dots,D_s)(\ip)$ over all finite subsets $\{D_1\dots,D_s\}\subset \Delta$.

\begin{lemma}\label{RegQuot}
Let $(R,\imax)$ be a regular local ring, let $\ip$ be a prime ideal of height $r$ and $\Delta$ be a subset of $\Der(R)$. Then:
\begin{enumerate}
\item $\rank J(\ip;\Delta)(\imax)\leq \rank J(\ip;\Delta)(\ip)\leq r$,
\item if $\rank J(f_1,\dots,f_r; D_1,\dots, D_r)(\imax)=r$ and $f_1,\dots,f_r\in\ip$, then $\ip=(f_1,\dots,f_r)$ and $R/\ip$ is regular.
\end{enumerate}

\end{lemma}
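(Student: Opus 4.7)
The plan is to derive both parts from the induced action of derivations on the conormal modules $\ip/\ip^2$ and $\imax/\imax^2$. The essential ingredient is that for any $D\in\Der(R)$ and any ideal $J\subset R$, the Leibniz rule gives $D(J^2)\subset J$, and the resulting map $\bar D:J\to R/J$ descends to an $R/J$-linear homomorphism $J/J^2\to R/J$: for $a\in R$ and $f\in J$ we have $D(af)=a\,Df+f\,Da$ whose second term vanishes mod $J$ because $f\in J$.

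For (1), applying this observation to $J=\ip$ and tensoring with $\kappa(\ip)$ produces a $\kappa(\ip)$-linear map $\ip R_\ip/\ip^2 R_\ip\to\kappa(\ip)^s$ for any finite $D_1,\dots,D_s\in\Delta$; its matrix with respect to a set of generators of $\ip$ is exactly $J(\ip;D_1,\dots,D_s)(\ip)$. Since $R$ is regular, $R_\ip$ is regular of dimension $r$, so the source is $r$-dimensional and the rank of the matrix is at most $r$; taking the supremum over finite subsets of $\Delta$ yields the second inequality. For the first inequality, I would read each rank as the size of the largest minor not contained in $\ip$ (respectively not contained in $\imax$); since $\ip\subset\imax$, any minor escaping $\imax$ also escapes $\ip$, so $\rank J(\ip;\Delta)(\imax)\le\rank J(\ip;\Delta)(\ip)$.

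For (2), write $I=(f_1,\dots,f_r)$; the hypothesis is that the $r\times r$ matrix $(D_j f_i)$ reduces to an invertible matrix over $\kappa(\imax)$. The key step is to show that $\bar f_1,\dots,\bar f_r$ are linearly independent in $\imax/\imax^2$. Given a relation $\sum\bar a_i\bar f_i=0$ with lifts $a_i\in R$, so $\sum a_i f_i\in\imax^2$, I apply each $D_j$ and use $D_j(\imax^2)\subset\imax$ together with $f_i\in\imax$ (which forces $\sum f_i D_j a_i\in\imax$) to conclude $\sum a_i D_j f_i\in\imax$ for every $j$. Reducing mod $\imax$ exhibits $(\bar a_1,\dots,\bar a_r)$ as an element of the kernel of our invertible matrix, so every $\bar a_i=0$. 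Hence $f_1,\dots,f_r$ extends to a regular system of parameters, and so $R/I$ is a regular local ring of dimension $\dim R-r$.

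To finish, I would appeal to dimension theory. Because $R$ is regular, hence universally catenary, $\dim R/\ip=\dim R-\hgt(\ip)=\dim R-r=\dim R/I$. Since $R/\ip$ is the quotient of the integral domain $R/I$ by the prime $\ip/I$, and these have the same Krull dimension, $\ip/I=0$, so $\ip=I$ and $R/\ip=R/I$ is regular. I do not expect any substantive obstacle: the only delicate point is verifying that $\bar D$ genuinely descends to an $R/J$-linear map on $J/J^2$, but this is a direct consequence of the Leibniz rule as noted above; the remainder is bookkeeping with minors together with the standard facts that part of a regular system of parameters cuts out a regular quotient and that regular rings are catenary.
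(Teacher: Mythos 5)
Your proof is correct, and it is essentially the standard argument: the paper itself gives no proof but simply cites \cite[Theorem 94]{Mat1}, whose proof proceeds exactly as you do — derivations descend to linear maps on the conormal module to bound the rank by $\hgt\ip$, minors give the comparison between reduction mod $\imax$ and mod $\ip$, and in (2) the invertibility of $(D_jf_i)$ mod $\imax$ shows the $f_i$ are part of a regular system of parameters, after which a height/dimension count forces $\ip=(f_1,\dots,f_r)$.
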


\begin{proof}
\cite[Theorem 94]{Mat1}
\end{proof}

\begin{lemma}\label{RegPrimeQuot}
Let $R$, $\ip$, and $\Delta$ be as in the preceding lemma. Then the following two conditions are equivalent:
\begin{enumerate}
\item $\rank J(\ip;\Delta)(\ip)=\hgt\ip$,
\item let $\iq$ be a prime ideal contained in $\ip$, then $R_\ip/\iq R_\ip$ is regular if and only if $\rank J(\iq;\Delta)(\ip)=\hgt\iq$.
\end{enumerate}
\end{lemma}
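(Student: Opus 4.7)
The implication (2) $\Rightarrow$ (1) is immediate: take $\iq=\ip$, so that $R_\ip/\ip R_\ip=\kappa(\ip)$ is regular, which by (2) forces $\rank J(\ip;\Delta)(\ip)=\hgt\ip$. So the content is in (1) $\Rightarrow$ (2). The plan is to localize at $\ip$ so that the ambient ring is regular local, and then prove both halves of the biconditional in (2) by comparing the Jacobian of a regular system of parameters of $R_\ip$ (supplied by hypothesis (1) and Lemma~\ref{RegQuot}) to a Jacobian built from generators of $\iq R_\ip$.

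Extend each $D\in\Delta$ to $R_\ip$ via the quotient rule; since the entries of the Jacobians we care about already lie in $R$, evaluation at $\ip R_\ip$ agrees with evaluation at $\ip$. Applying Lemma~\ref{RegQuot}(1) to the regular local ring $R_\ip$ and its prime $\iq R_\ip$ of height $h:=\hgt\iq$ gives the upper bound $\rank J(\iq;\Delta)(\ip)\leq h$ for free. For the easier direction of (2), suppose $\rank J(\iq;\Delta)(\ip)=h$ and pick $f_1,\dots,f_h\in\iq$ and $D_1,\dots,D_h\in\Delta$ realizing this rank; Lemma~\ref{RegQuot}(2) applied inside $R_\ip$ then gives $\iq R_\ip=(f_1,\dots,f_h)$ and $R_\ip/\iq R_\ip$ regular.

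For the harder direction, assume $R_\ip/\iq R_\ip$ is regular. Using (1) and Lemma~\ref{RegQuot}(2), fix $g_1,\dots,g_r\in\ip$ and $D_1,\dots,D_r\in\Delta$ with $\det(D_i g_j)\not\in\ip$; then $g_1,\dots,g_r$ is a regular system of parameters of $R_\ip$. Regularity of $R_\ip/\iq R_\ip$ means $\iq R_\ip$ is generated by part of some regular system of parameters, so after permuting the $g_j$ we may pick $f_1,\dots,f_h\in\iq$ so that $f_1,\dots,f_h,g_{h+1},\dots,g_r$ is itself a regular system of parameters of $R_\ip$. Writing $f_i=\sum_j a_{ij}g_j$ and differentiating, we get $D_k f_i\equiv\sum_j\overline{a_{ij}}\,\overline{D_k g_j}\pmod{\ip R_\ip}$, hence
\[
J(f_1,\dots,f_h,g_{h+1},\dots,g_r;D_1,\dots,D_r)\equiv J(g_1,\dots,g_r;D_1,\dots,D_r)\cdot M\pmod{\ip},
\]
where $M$ is the $r\times r$ matrix whose first $h$ columns are $(a_{ij})^T$ and whose last $r-h$ columns are the standard basis vectors $e_{h+1},\dots,e_r$. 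The matrix $M$ modulo $\ip$ is exactly the change-of-basis matrix from $\bar g_1,\dots,\bar g_r$ to $\bar f_1,\dots,\bar f_h,\bar g_{h+1},\dots,\bar g_r$ in the $\kappa(\ip)$-vector space $\ip R_\ip/\ip^2 R_\ip$, so it is invertible. Since the other factor is invertible modulo $\ip$ by hypothesis, the $r\times r$ Jacobian on the left has rank $r$ modulo $\ip$; restricting to its first $h$ columns yields $\rank J(\iq;\Delta)(\ip)\geq h$, which combined with the upper bound gives the required equality.

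The main obstacle is the change-of-basis step: one has to verify simultaneously that $M$ really can be arranged in the block form above (which requires permuting the $g_j$ so that $\bar g_{h+1},\dots,\bar g_r$ descend to a regular system of parameters in $R_\ip/\iq R_\ip$) and that its nonvanishing determinant modulo $\ip$ follows from $f_1,\dots,f_h,g_{h+1},\dots,g_r$ being a regular system of parameters; the remaining manipulations are matrix bookkeeping.
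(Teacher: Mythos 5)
Your argument is correct and is essentially the proof of \cite[Theorem 95]{Mat1}, which is all the paper offers for this lemma (it is quoted there by citation, without proof). The reduction to the regular local ring $R_\ip$, the use of Lemma \ref{RegQuot} both for the upper bound $\rank J(\iq;\Delta)(\ip)\leq\hgt\iq$ and for the easy direction of the biconditional, and the change-of-basis comparison of $J(f_1,\dots,f_h,g_{h+1},\dots,g_r;D_1,\dots,D_r)$ with $J(g_1,\dots,g_r;D_1,\dots,D_r)$ all match the standard argument in the cited source.
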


\begin{proof}
\cite[Theorem 95]{Mat1}
\end{proof}

\begin{definition}\label{WJdef}
The \emph{weak Jacobian condition} holds in a regular ring $R$ if for every $\ip$ in $\Spec(R)$, $\text{rank } J(\ip; \Der(R))(\ip)=\hgt \ip$. In this case we say that (WJ) holds in $R$.
\end{definition}

Mizutani and Nomura showed that rings satisfying (WJ) and containing $\Q$ are excellent: their proof is \cite[Theorem 101]{Mat1}. A key step in the proof is the following proposition, we go through it in detail as we will adapt it when showing that $A\{T_1,\dots, T_n\}$ is excellent. 

\begin{prop}\label{WJisJ2}
Every regular ring $R$ satisfying (WJ) is J-2.
\end{prop}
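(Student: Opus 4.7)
The plan is to reduce the J-2 property of $R$ to showing that each quotient $R/\ip$ has open regular locus, and then to verify this openness using a Jacobian-minor argument supplied by (WJ).

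Since we work in characteristic zero in this section, $\Q\subset R$, and Corollary \ref{J2quot} reduces the claim to showing that $R/\ip$ is J-1 for every $\ip\in \Spec(R)$. Fix such a prime $\ip$ of height $r$. For every prime $\iq$ containing $\ip$, the identification $(R/\ip)_{\iq/\ip}\cong R_\iq/\ip R_\iq$ together with Lemma \ref{RegPrimeQuot} (whose condition (1) is precisely (WJ) at $\iq$) gives that $(R/\ip)_{\iq/\ip}$ is regular if and only if $\rank J(\ip;\Der(R))(\iq) = r$. Hence $\Reg(\Spec(R/\ip))$ is exactly the set of such $\iq$ in $V(\ip)$.

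To show this locus is open, take any $\iq$ inside it. By definition of rank as a supremum over finite families of derivations, there exist $f_1,\dots,f_r\in \ip$ and $D_1,\dots,D_r\in \Der(R)$ with $g := \det(D_i f_j)\notin \iq$. On the basic open subset $D(g)\cap V(\ip)$ the same $r\times r$ minor witnesses $\rank J(\ip;\Der(R))(\iq')\geq r$ for every $\iq'$ in this neighborhood. Combined with the opposite inequality (addressed below), equality holds throughout the neighborhood, placing it entirely inside the regular locus.

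The main obstacle I expect is justifying the upper bound $\rank J(\ip;\Der(R))(\iq')\leq r$ for arbitrary $\iq'\supset \ip$, since Lemma \ref{RegQuot}(1) only states this at $\iq' = \ip$. My plan is to localize at $\iq'$: the ring $R_{\iq'}$ is regular, the prime $\ip R_{\iq'}$ still has height $r$, and derivations of $R$ extend uniquely to $R_{\iq'}$ by the quotient rule, so applying Lemma \ref{RegQuot}(1) inside $R_{\iq'}$ yields the needed bound and closes the argument.
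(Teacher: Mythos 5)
Your proof is correct and follows essentially the same route as the paper: reduce via Corollary \ref{J2quot} to showing $\Reg(R/\ip)$ is open, then use (WJ) and Lemma \ref{RegPrimeQuot} to extract a full-rank Jacobian minor $\det(D_if_j)\notin\iq$ and spread it out over a basic open set. The only (harmless) divergence is in the closing step: the paper additionally introduces an element $g$ witnessing that $f_1,\dots,f_r$ generate $\ip$ near $\iq$, bounds $\hgt(\ip R_{\iq'})$, and applies Lemma \ref{RegQuot}(2), whereas you obtain the upper bound $\rank J(\ip;\Der(R))(\iq')\leq r$ by localizing at $\iq'$ and applying Lemma \ref{RegQuot}(1), then conclude regularity from the equivalence of Lemma \ref{RegPrimeQuot}, which is available at every nearby prime because (WJ) holds globally.
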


\begin{proof}
This is roughly the argument of \cite[Paragraph 40.D]{Mat1}. By Corollary \ref{J2quot}, it is enough to show that for every $\iq\in\Spec(R)$, the set $\Reg(R/\iq)$ is open in $\Spec(R/\iq)$. Fix any prime $\ip\supseteq \iq$ such that the image $P$ of $\ip$ in $\Spec(R/\iq)$ is regular. We will construct an open set around $P$ contained in $\Reg(R/\iq)$. 

As (WJ) holds in $R$, we have $\rank J(\ip;\Der(R))(\ip)=\rank J(\ip;\Der(R_\ip))=\hgt(\ip)$. By Lemma \ref{RegPrimeQuot}, this implies that $\rank J(\iq; \Der(R_\ip))(\ip)=\hgt(\iq)$ as $R_\ip/\iq R_\ip$ is regular. Let $r=\hgt(\iq)$, then we have $f_1,\dots,f_r\in\iq$ and $D_1,\dots,D_r\in \Der(R)$ such that $\det(D_if_j)\not\in(\ip)$. By Lemma \ref{RegQuot}, this implies that $\iq R_\ip=(f_1,\dots, f_r) R_\ip$. We therefore have some $g\in R-\ip$ such that $\iq R_g=(f_1,\dots,f_r) R_g$. Let $h=\det(D_if_j)$. By definition $g$ and $h$ are not in $\ip$ or $\iq$, so the reduction $\overline{gh}$ is nonzero in $R/\iq$. For any prime $\ip'$ reducing to some $P'\in D(\overline{gh})\subset\Spec(R/\iq)$, we have $\rank\ J(f_1,\dots, f_r; D_1,\dots, D_r)(\ip')=r$ as $\overline{h}\not\in P'$, so by Lemma \ref{RegQuot} $\hgt(\iq R_{\ip'})\geq r$. As $\overline{g}\not\in P'$, $f_1,\dots, f_r$ generate $\iq R_{\ip'}$ so $\hgt(\iq R_{\ip'})\leq r$. So $\hgt(\iq R_{\ip'})=r$ and we can apply Lemma \ref{RegQuot} to see that $R_{\ip'}/\iq R_{\ip'}$ is regular. So $\Reg(R/\iq R)$ contains the open set $D(\overline{gh})$ containing $P$, so it is open in $\Spec(R/\iq R)$ as desired.
\end{proof}

We can now state the hypotheses we need to prove excellence.

\begin{hypothesis}\label{exhyp}
Let $A$ be a regular integral domain containing $\Q$, let $R$ be a ring such that $A[T_1,\dots,T_n]\subset R\subset A[[ T_1,\dots,T_n]]$ and that is stable under the $n$ derivatives $\frac{\partial}{\partial T_i}$. Assume that $(R,A)$ satisfies the Nullstellensatz condition of Definition \ref{nulldef} and that $R\otimes_A \Fra(A)$ is weakly Jacobian as in Definition \ref{WJdef}.
\end{hypothesis}

It is clear that for (WJ) to hold in $R$ we must have $\dim_R(\Der(R))\geq \dim(R)$. We have $n$ natural derivations to work with in our setup, so we'd prefer to work with rings of dimension at most $n$. We therefore tensor with $\Fra(A)$ to reduce the dimension to something that (WJ) can apply to. We now check that the rings of interest satisfy our hypothesis.

\begin{prop}\label{WJex}
Let $A=\Ar$ and $R=\ATrn$, then Hypothesis \ref{exhyp} is satisfied.
\end{prop}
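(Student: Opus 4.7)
The plan is to verify each of the four clauses of Hypothesis \ref{exhyp} in turn for $A := \Ar$ and $R := \ATrn$; only the weak Jacobian condition requires substantive argument.

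For clause (1), recall that $\Ar$ is a Euclidean domain (as used throughout \cite{NP} and cited in Theorem \ref{rnull}), hence a regular $1$-dimensional integral domain; in the characteristic-$0$ case covered in this section (Remark \ref{Echar0}), it contains $\Q$. Clause (2) holds by construction, since $\ATrn$ is the completion of $\Ar[T_1,\dots,T_n]$ with respect to a weighted Gauss norm and so sits inside $\Ar[[T_1,\dots,T_n]]$. For clause (3), if $x=\sum a_I T^I\in R$ with $|a_I|\rho^I\to 0$, then the coefficients of $\partial x/\partial T_i$ have norms bounded by $\rho_i^{-1}|a_{I+e_i}|\rho^{I+e_i}$ (using $|k|\le 1$ for $k\in\Z$ in the non-archimedean norm on $A$), which still tend to zero. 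Clause (4) is precisely Theorem \ref{rnull}.

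The remaining task, verifying the weak Jacobian condition for $R' := R\otimes_A K$ where $K := \Fra(A)$, I would handle as follows. First, $R'$ is regular because $R$ is (Corollary \ref{Breg}) and regularity is preserved under localization. Second, $\dim R'\le n$: by the Nullstellensatz, $\dim R = \dim A + n = n+1$, and tensoring with $K$ collapses the $1$-dimensional contribution of $A$. The $n$ derivations $\partial/\partial T_i$ on $R$ vanish on $A$ and so extend uniquely to $K$-linear derivations of $R'$, providing $n$ elements of $\Der(R')$. For a prime $\ip\subset R'$ of height $r$, the plan is to choose $f_1,\dots,f_r\in\ip$ forming a regular system of parameters in $R'_\ip$ and to show that some $r\times r$ minor of the Jacobian $(\partial f_j/\partial T_i)_{j,i}$ does not lie in $\ip$; equivalently, that $df_1,\dots,df_r$ are linearly independent in the $\kappa(\ip)$-span of $dT_1,\dots,dT_n$ inside $\Omega^1_{R'/K}\otimes_{R'}\kappa(\ip)$.

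The main obstacle is this final independence statement, which amounts to a Jacobian criterion specific to our convergent-series ring. Paralleling Matsumura's proof of \cite[Theorem 101]{Mat1} for convergent power series over $\R$ or $\C$, the argument should use that any $K$-linear derivation of $R'$ valued in a finitely generated $R'_\ip$-module is already determined by its values on $T_1,\dots,T_n$: on the dense subring $A[T_1,\dots,T_n]$ this is immediate from the Leibniz rule, and a boundedness argument (leveraging the convergence condition defining $R$ together with the completeness of the target after localizing and completing at $\ip$) extends it to all of $R'$. Granting this, the $K$-derivations dual to $\kappa(\ip)dT_1\oplus\cdots\oplus\kappa(\ip)dT_n$ detect every element of $\ip R'_\ip/\ip^2 R'_\ip$, forcing the Jacobian rank to equal $r$ and completing the verification of Hypothesis \ref{exhyp}.
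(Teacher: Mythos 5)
Your verification of the routine clauses of Hypothesis \ref{exhyp} is fine, and your instinct that the weak Jacobian condition is the only substantive point matches the paper. But the way you close the (WJ) argument has a genuine gap. You propose to show that every $K$-linear derivation of $R'=R\otimes_A K$ is determined by its values on $T_1,\dots,T_n$, and then conclude that ``the $K$-derivations dual to $\kappa(\ip)dT_1\oplus\cdots\oplus\kappa(\ip)dT_n$ detect every element of $\ip R'_\ip/\ip^2 R'_\ip$.'' That last inference does not follow: determinedness of derivations by their values on the $T_i$ is an \emph{upper bound} statement (it says $\Der_K(R')$ injects into $R'^n$, i.e.\ there are no derivations beyond the partials), whereas (WJ) is a \emph{lower bound} statement (the partials must separate the cotangent space at every prime, i.e.\ the map $\ip/\ip^2\to\kappa(\ip)^n$ given by the Jacobian must be injective on an $r$-dimensional subspace). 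For these completed rings $\Omega^1_{R'/K}$ is not the free module on $dT_1,\dots,dT_n$, so one cannot pass from a statement about $\Der$ to a statement about $\ip/\ip^2$ by duality; this is precisely why Matsumura's proof of \cite[Theorem 101]{Mat1} does not argue this way.

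The missing ingredient is the Nullstellensatz, which your (WJ) argument never invokes. The paper's proof picks a maximal ideal $\iq\subset R$ containing the contraction $\ip'$ of $\ip$, sets $Q=R_\iq/\ip'$, and uses the rank identity $\rank\Der_A(Q)=n-\rank J(\ip';\Delta)$ from the proof of \cite[Theorem 100]{Mat1}; the problem thus becomes showing $\rank\Der_A(Q)\le n-h$. This is where Theorem \ref{rnull} enters: it gives $\iq\cap A=(x_0)$ maximal, so the residue field $F=R/\iq$ is algebraic over the field $A/(x_0)$, and the Cohen structure theorem exhibits $\widehat Q$ as an integral extension of $F[[x_0,\dots,x_r]]$. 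Any $A$-derivation of $Q$ killing a system of parameters $x_1,\dots,x_r$ then kills $x_0$ (it lies in $A$), kills $F$ (algebraic over $A/(x_0)$, characteristic $0$), hence kills all of $\widehat Q$ by the minimal-polynomial argument. Note that the ``determinedness'' statement actually needed is for derivations of the quotient $Q$, not of $R'$ itself, and $Q$ does not sit inside a power series ring in any obvious way --- so your proposed density-plus-boundedness argument on $R'$ would not reach it even if completed. Without the Nullstellensatz controlling the residue fields at maximal ideals, the argument does not close.
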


\begin{proof}
Everything but the weak Jacobian condition has already been checked or is clear. Choose $\ip\in \Spec(R\otimes_A\Fra(A))$, let the height of $\ip$ be $h$. Let $\ip'\in\Spec(R)$ be the contraction of $\ip$, it also has height $h$. Let $\iq\in\Spec(R)$ be a maximal ideal containing $\ip'$, let $Q=R_\iq/\ip'$, this is a local ring of dimension $n+1-q$. 

Let $\Delta$ be the derivations of $R$ induced by derivations of $\Der(R\otimes_A K))$, then $\Delta$ is generated by the $n$ elements $\frac{\partial}{\partial T_i}$. The derivations in $\Delta$ are exactly the $A$-derivations of $R$. We have $\rank J(\ip;\Der(R\otimes_A K))=\rank J(\ip'; \Delta)\leq h$ by Lemma \ref{RegQuot}; we must show that we have equality. Following the proof of \cite[Theorem 100]{Mat1}, we see that $\rank \Der_A(Q)=n-\rank J(\ip';\Delta)$, so it is enough to show that $\rank \Der_A(Q)=n-h$. 

To do this, we largely follow \cite[Theorem 98]{Mat1}. By Theorem \ref{rnull}, we have $\iq\cap A=(x_0)$ for a principal maximal ideal $(x_0)$. We extend $x_0$ to a system of parameters $x_0,\dots,x_r$ of $Q$, here $r=n-h$. By the Cohen structure theorem, we have that $\widehat Q$ is an integral extension of $F[[x_0,\dots,x_r]]$ where $F=R/\iq$. Writing $F=(R/(x_0))/(\iq/(x_0)),$ we see that it is the quotient of an affinoid algebra over the field $A/(x_0)$, so it is an integral extension. 

Take any $A$-linear derivation $D$ of $Q$ vanishing on $x_1,\dots,x_r$ and extend it to $\widehat Q$. Then $D$ vanishes on $x_0$ as $x_0\in A$, and it vanishes on $F$ as it vanishes on $A/(x_0)$ and $F$ is an integral extension of $A/(x_0)$. So $D$ vanishes on all of $F[[x_0,\dots,x_r]]$, so it must also vanish on $\widehat Q$ and therefore $Q$. Indeed, given any $y\in Q$, there is some integral relation $f(T)$ for $y$ over $F[[x_0,\dots,x_n]]$ of minimal degree.  Then $0=D(f(y))=f'(y)D(y)$ and $f'(y)$ is nonzero as the characteristic is 0, so $D(y)=0$. So $D$ is determined by the tuple $(D(x_1),\dots,D(x_m))$, and so $\rank \Der_A(Q)\leq n-h$. As $\rank J(\ip'; \Delta)\leq h$, we must have equalities in both equations as desired.
\end{proof}

\begin{remark}
Keeping notation as in Proposition \ref{WJex}, we note we could apply \cite[Theorem 100]{Mat1} to get (WJ) for $R\otimes_A\Fra(A)$ if we knew that every maximal ideal $\imax\subset R\otimes_A\Fra(A)$ has residue field algebraic over $\Fra(A)$. While this seems plausible, we were unable to prove it directly. It is tempting to try to prove a version of Weierstrass preparation and division for $R\otimes_A\Fra(A)$, but following the standard proof for affinoid algebras over a field doesn't quite work.
\end{remark}

\begin{prop}\label{BJ2}
Any ring $R$ satisfying Hypothesis \ref{exhyp} is J-2.
\end{prop}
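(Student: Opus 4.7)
The plan is to adapt the Mizutani--Nomura argument of Proposition \ref{WJisJ2}, with the weak Jacobian input supplied by $R_K := R \otimes_A \Fra A$ rather than by $R$ itself and with the $A$-linear stable derivations $\Delta = \{\partial/\partial T_1, \dots, \partial/\partial T_n\}$ playing the role of $\Der(R)$. Since $\Q \subset R$, Corollary \ref{J2quot} reduces the task to showing that $R/\iq$ is J-1 for each prime $\iq \in \Spec R$. I fix such a $\iq$ and a regular point $P = \ip/\iq \in \Reg(\Spec R/\iq)$, and I try to produce an open neighborhood of $P$ inside $\Reg(\Spec R/\iq)$.

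When $\ip \cap A = 0$, the prime $\ip$ extends to $\ip_K \subset R_K$ of the same height, and $R_\ip = (R_K)_{\ip_K}$ is regular. The weak Jacobian condition on $R_K$ at $\ip_K$ gives $\rank J(\ip;\Delta)(\ip) = \hgt\ip$, and Lemma \ref{RegPrimeQuot} combined with regularity of $R_\ip/\iq R_\ip$ then yields $\rank J(\iq;\Delta)(\ip) = \hgt\iq$. The remainder of the proof of Proposition \ref{WJisJ2} now transfers essentially verbatim: pick $f_1,\dots,f_r\in\iq$ and $D_1,\dots,D_r\in\Delta$ with $h:=\det(D_i f_j)\notin\ip$ so that $(f_1,\dots,f_r)R_\ip=\iq R_\ip$, then $g\in R\setminus\ip$ with $(f_1,\dots,f_r)R_g=\iq R_g$, and invoke Lemma \ref{RegQuot} to conclude that $R_{\ip'}/\iq R_{\ip'}$ is regular for every $\ip'\in D(\overline{gh})$ at which $R_{\ip'}$ is regular. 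In the applications of interest $R$ is itself regular by Corollary \ref{Breg}, and so $D(\overline{gh})$ is the desired open neighborhood of $P$.

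When $\ip \cap A \neq 0$, the derivations in $\Delta$ are blind to the contribution of $\ip \cap A$ to $\hgt\ip$, and the direct Jacobian trick at $\ip$ breaks down. Here my plan is to invoke the Nullstellensatz to reduce to an affinoid quotient: $\ip \cap A$ is contained in a maximal ideal of $A$, which for $A=\Ar$ is principal of the form $(m)$, and by the proof of Corollary \ref{Breg} the quotient $R/(m)R$ is an affinoid algebra over the nonarchimedean field $A/(m)$, hence excellent by Kiehl and in particular J-2. One then extracts the required open neighborhood of $P$ inside $\Reg(\Spec R/\iq)$ from the openness of the regular locus on the closed subscheme $V(\bar m) \subset \Spec R/\iq$, taking care of the subtle distinction between regularity of $R_\ip/\iq R_\ip$ and regularity of the corresponding local ring on $R/((m)+\iq R)$.

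The main obstacle is precisely this second case: bridging the regular loci on $\Spec R/\iq$ and on the affinoid closed subscheme $V(\bar m)$ near a prime $\ip$ that meets $A$ nontrivially. A cleaner uniform argument paralleling Mizutani--Nomura under the full generality of Hypothesis \ref{exhyp} would likely require either enlarging the derivation module on $R$ with derivations of $A$ (so that a weak Jacobian condition holds for $R$ itself), or proceeding by induction on $\dim A$ using that Hypothesis \ref{exhyp} is preserved under passage to quotients $A \to A/\iq_A$.
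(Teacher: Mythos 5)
Your first case ($\ip\cap A=0$) is sound and is essentially the paper's argument over the generic fibre: apply the weak Jacobian condition in $R\otimes_A\Fra(A)$ and run the Mizutani--Nomura construction of Proposition \ref{WJisJ2}. The gap is the second case, which you flag but do not close, and the affinoid route you sketch for it cannot work as stated. Openness of the regular locus of $R/(\iq+(m))$ gives at best a subset that is open in the closed set $V(\bar m)\subset\Spec(R/\iq)$; when $\iq\cap A=0$ this closed set is proper, hence has empty interior, and so cannot contain an open neighborhood of $P$ in $\Spec(R/\iq)$. Worse, $\Reg(\Spec(R/(\iq+(m))))$ is not identified with $\Reg(\Spec(R/\iq))\cap V(\bar m)$ in either direction: transferring regularity between $(R/\iq)_{\ip'}$ and $(R/(\iq+(m)))_{\ip'}$ requires $\bar m$ to be part of a regular system of parameters (this is the one-way implication used at the end of Corollary \ref{Breg}), and no such control is available at the nearby primes $\ip'$ you would need. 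The ``subtle distinction'' you mention is therefore the entire content of the missing step, not a technicality.

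The paper resolves this by exactly the second alternative you name only in passing: induction on $\dim(A)$. One first reduces to $\iq\cap A=0$ by replacing $A$ with $A/(\iq\cap A)$; then the Jacobian argument over $\Fra(A)$ is run once for $\iq$ (not prime by prime), and clearing denominators produces a single $d\in A$ such that $\Reg((R/\iq)_d)$ is open. The complementary locus $V(d)$ is $\Spec$ of $R/(d)$, which satisfies Hypothesis \ref{exhyp} over the lower-dimensional base $A/(d)$ (after passing to its finitely many minimal primes), so the inductive hypothesis applies there; the base case is $A$ a field, where (WJ) holds for $R$ itself by Proposition \ref{WJex} and Proposition \ref{WJisJ2} concludes directly. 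Your proposal identifies this strategy as ``likely required'' but does not carry out the reduction, check that the hypothesis is inherited by $R/(d)$, or organize the case split around $\iq$ and the element $d$ rather than around individual primes $\ip$. As written the proof is incomplete.
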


\begin{proof}
We proceed by induction on the dimension of $A$. As $A$ is an integral domain, the base case is when $A$ is a field. By Proposition \ref{WJex}, $R$ is (WJ) in this case, so by Proposition \ref{WJisJ2} it is J-2. For the inductive step, by Corollary \ref{J2quot} it is again enough to show that for every $\iq\in \Spec(R)$, the set $\Reg(R/\iq)$ is open in $\Spec(R/\iq)$. 

Let $\iq\cap A=Q$, then we can reduce to the case where $Q=\{0\}$ by noting that $R/\iq \cong (R/Q)/(\iq/Q)$ and that $(A/Q)[T_1,\dots,T_n]\subset R/Q\subset (A/Q)[[ T_1,\dots,T_n]]$. In this case, $\iq$ is in the image of the injection $\Spec(R\otimes_A \Fra(A))\hookrightarrow \Spec(R)$, so we can work with the preimage $\iq'$. By Hypothesis \ref{exhyp}, (WJ) holds in $R\otimes_A \Fra(A)$, so Proposition \ref{WJisJ2} implies that this ring is J-2.

In particular, letting $\ip=\iq'$ in Proposition \ref{WJisJ2} we get a nonempty open set $D(\overline{gh})$ containing only regular primes. The construction of $\overline{gh}$ gives the required generators $f_1,\dots,f_r\in \ip\otimes_A \Fra(A)$ and derivations $D_1,\dots, D_r\in \Der(R\otimes_A\Fra(A))$ for Lemma \ref{RegQuot} to apply. Finding a common denominator in $\Fra(A)$ gives an element $d\in A$ such that the entire argument works in $R_d$, so the set $\Reg((R/\iq)_d)$ is open.

To complete the proof, we just need to show that $$\Reg(R\cap V(d)\subset\Spec(R\}/\iq)\cap V(d)$$ is open. This is equivalent to checking that $\Reg((R/d)/(\iq, d))$ is open in $\Spec(R/d)/(\iq, d))$. We just need to check this for each of the finitely many minimal primes. As $R/(d)$ satisfies Hypothesis \ref{exhyp} for $A=A/(d)$ and $\dim(A/d)<\dim(A)$ these follow from the inductive hypothesis.

\end{proof}

\begin{prop}\label{BGring}
Any ring $R$ satisfying Hypothesis \ref{exhyp} is a G-ring.
\end{prop}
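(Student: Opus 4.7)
The plan is to parallel the induction on $\dim A$ from Proposition \ref{BJ2}, substituting the G-ring condition for J-2 throughout and invoking a characteristic-zero Jacobian criterion for geometric regularity of formal fibers in the base case. Since $R$ contains $\Q$, geometric regularity coincides with ordinary regularity of formal fibers, and we must check: for every prime $\mathfrak{p} \in \Spec(R)$ and every $\mathfrak{q} \subseteq \mathfrak{p}$, the formal fiber $\widehat{R_\mathfrak{p}} \otimes_{R_\mathfrak{p}} k(\mathfrak{q})$ is regular.

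The base case is $A$ a field, so $R = R \otimes_A \Fra(A)$ and (WJ) holds in $R$ by Hypothesis \ref{exhyp}. I would apply Lemma \ref{RegQuot} to the completion $\widehat{R_\mathfrak{p}}$ rather than to $R_\mathfrak{p}$ itself: each derivation $\partial/\partial T_i$ of $R$ extends continuously to $\widehat{R_\mathfrak{p}}$, and the (WJ) hypothesis at $\mathfrak{p}$ produces elements $f_1,\dots,f_r \in \mathfrak{q} R_\mathfrak{p}$ (with $r = \hgt \mathfrak{q}$) and derivations whose Jacobian has rank $r$ modulo $\mathfrak{p}$. Since the residue field is preserved by completion, the same Jacobian has rank $r$ over $\widehat{R_\mathfrak{p}}/\mathfrak{p}\widehat{R_\mathfrak{p}}$, and Lemma \ref{RegQuot}(2) applied in $\widehat{R_\mathfrak{p}}$ gives both $\mathfrak{q}\widehat{R_\mathfrak{p}} = (f_1,\dots,f_r)\widehat{R_\mathfrak{p}}$ and the regularity of $\widehat{R_\mathfrak{p}}/\mathfrak{q}\widehat{R_\mathfrak{p}}$. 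Localizing yields regularity of all formal fibers at $\mathfrak{p}$. This is essentially Matsumura's Jacobian argument in the proof of \cite[Theorem 101]{Mat1}.

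For the inductive step, given $\mathfrak{p} \in \Spec(R)$ let $Q = \mathfrak{p} \cap A$. If $Q = 0$, then $\mathfrak{p}$ comes from a prime of $R \otimes_A \Fra(A)$; since completion commutes with localization at $A \setminus \{0\}$, the formal fibers of $R_\mathfrak{p}$ coincide with those of $(R \otimes_A \Fra(A))_\mathfrak{p}$, and the base case applies. If $Q \neq 0$, I would stratify $\Spec(R)$ into $D(d) \cup V(d)$ for a well-chosen nonzero $d \in Q$, exactly as in Proposition \ref{BJ2}: formal fibers above $D(d)$ reduce to computations in $R_d$, while above $V(d)$ they reduce to the formal fibers of $R/(d)$. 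Passing to the finitely many minimal primes $P$ of $A/(d)$, the quotient $R/(d, PR)$ satisfies Hypothesis \ref{exhyp} with the regular integral domain $A/P$ (of strictly smaller dimension than $A$), so the inductive hypothesis completes the argument.

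The main obstacle is the base case. One must verify that extending the derivations $\partial/\partial T_i$ to $\widehat{R_\mathfrak{p}}$ and invoking Lemma \ref{RegQuot} in the completion is legitimate: this requires that the derivations be continuous (automatic since they shift the filtration by powers of $\mathfrak{p}$ only by a bounded amount), that the Jacobian rank be preserved under completion (immediate since residue fields are unchanged), and that $\widehat{R_\mathfrak{p}}$ remain a noetherian regular local ring (standard). All of these are routine in Matsumura's setup, so the base case follows without substantial new input beyond what already appeared in Proposition \ref{WJex}.
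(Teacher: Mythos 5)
Your overall strategy (geometric regularity reduces to regularity in characteristic zero, a Jacobian criterion via (WJ), reduction on $A$) matches the paper's, but the base case contains a genuine error in how (WJ) is applied. You claim that (WJ) at $\ip$ produces $f_1,\dots,f_r\in\iq$, with $r=\hgt\iq$, whose Jacobian has rank $r$ \emph{modulo $\ip$}. By Lemma \ref{RegPrimeQuot}, the existence of such $f_i$ is equivalent to $R_\ip/\iq R_\ip$ being regular, and this fails for most pairs $\iq\subseteq\ip$: for instance, with $\iq=(T_2^2-T_1^3)\subset\ip=(T_1,T_2)$, every $f\in\iq$ has $\partial f/\partial T_1,\ \partial f/\partial T_2\in\ip$, so $\rank J(\iq;\Delta)(\ip)=0<1=\hgt\iq$. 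Since the G-ring condition requires regular formal fibers over \emph{all} primes $\iq$, including those cutting out something singular at $\ip$, this step cannot be rescued by restricting to good $\iq$.

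The fix, and what the paper does, is to apply (WJ) at $\iq$ itself: this yields $f_1,\dots,f_r\in\iq$ and derivations with $\det(D_if_j)\notin\iq$, hence $\det(D_if_j)\notin P$ for every prime $P$ of $\widehat{R_\imax}$ contracting to $\iq$ --- and these are exactly the primes of the formal fiber over $\iq$. One then extends the $D_i$ to $\widehat{R_\imax}$, uses flatness of completion (\cite[Theorem 19]{Mat1}) to get $\hgt(\iq\widehat{R_\imax})=\hgt\iq=r$, and applies Lemma \ref{RegQuot} in $(\widehat{R_\imax})_P$ to conclude that $(\widehat{R_\imax})_P/\iq(\widehat{R_\imax})_P$ is regular. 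A secondary problem: in your inductive step, the assertion that the formal fibers of $R_\ip$ coincide with those of $(R\otimes_A\Fra(A))_\ip$ when $\ip\cap A=0$ is off target, because the completion is taken at a maximal ideal $\imax$, and by the Nullstellensatz condition $\imax\cap A$ is maximal in $A$, so $\imax$ does not survive in $R\otimes_A\Fra(A)$. The paper uses $R\otimes_A\Fra(A)$ only to manufacture the $f_i$ and $D_i$, clearing denominators by an element of $A$ so as to land back in $R$; it never compares completions of the two rings.
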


\begin{proof}
Here we are adapting \cite[Theorem 101]{Mat1}. By \cite[Theorem 75]{Mat1}, it is enough to show that for every maximal ideal $\imax$ of $R$, the local ring $R_\imax$ has geometrically regular formal fibers. As $\Q\subset R$, it is enough to check that the formal fibers are regular; the argument is the same as in Corollary \ref{J2quot}. Concretely, it is enough to show that for every prime $\ip\in\Spec(\widehat{R_\imax})$, the local ring $(\widehat{R_\imax})_\ip/\ip$ is regular. As in Proposition \ref{BJ2}, we can reduce to the case where $\ip\cap A=\{0\}$ by replacing $A$ with $A/(\ip\cap A)$.

When $\ip\cap A=(0)$, we look at the image $\ip'$ of $\ip$ in $R\otimes_{A} \Fra(A)$. Here (WJ) holds by Proposition \ref{WJex}, so we get derivations $D'_1,\dots, D'_r$ and $f'_1,\dots, f'_r\in \ip\otimes_{A} \Fra(A)$ such that $\rank J(f'_1,\dots, f'_r; D'_1,\dots, D'_r)(\ip')=r$. We can multiply by an element of $A$ to clear the denominators of the matrix and restrict the derivations to $R$. This gives $f_1,\dots,f_r\in\ip$ with $\rank J(f_1,\dots,f_r; D_1,\dots,D_r)(\ip)=r$. 

We can extend the derivations to $\widehat{R_\imax}$ and view the $f_i$ as elements of $\ip (\widehat{R_\imax})_P$ to get $\rank J(f_1,\dots,f_r; D_1,\dots,D_r)(\ip (\widehat{R_\imax})_P)=r$. By \cite[Theorem 19]{Mat1}, we have $\hgt\ip \widehat{R_\imax}=\hgt\ip=r$ so we can again apply \ref{RegQuot} to see that $(\widehat{R_\imax})_P/\ip$ is regular as desired.
\end{proof}

Combining these gives the desired theorem.

\begin{theorem}\label{Rexc}
Any ring $R$ satisfying Hypothesis \ref{exhyp} is excellent.
\end{theorem}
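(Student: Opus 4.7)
The plan is to assemble the standard characterization of excellence from the pieces already in hand. Recall that by Matsumura's definition, a noetherian ring is excellent precisely when it is universally catenary, a G-ring, and J-2. The G-ring property is Proposition \ref{BGring} and the J-2 property is Proposition \ref{BJ2}, so the theorem reduces to verifying noetherianness and universal catenarity for the rings $R$ to which Theorem \ref{Rexc} will be applied.

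For noetherianness, I would appeal to the strong noetherian property established earlier (Corollary \ref{BCstrictN} together with Remark \ref{strictNoethisNoeth}), which covers the target rings $\ATrn$, $\Bi$, and the \'etale extensions $C$. For universal catenarity, I would route through regularity: every regular ring is Cohen--Macaulay, and Cohen--Macaulay rings are universally catenary. Regularity of $\ATrn$ and $\Bin$ is Corollary \ref{Breg}, and for an \'etale extension $C$ one combines the structure result of Lemma \ref{etalestructure} (a rational localization followed by a finite \'etale map) with regularity of $\Bi$ to conclude that $C$ itself is regular. Assembling all four properties then gives excellence immediately from the definition.

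The main obstacle I anticipate is essentially administrative rather than mathematical: Hypothesis \ref{exhyp} as stated does not literally include noetherianness or regularity of $R$, so the passage from the generic hypothesis to the concrete rings of interest ($\ATrn$, $\Bi$, $C$) must be made carefully and probably recorded as a separate corollary. The substantive content of the proof — the weak Jacobian condition, the derivation-theoretic J-2 argument, and the G-ring argument via formal fibers — has already been carried out in Propositions \ref{WJex}, \ref{BJ2}, and \ref{BGring}, so nothing further of depth remains.
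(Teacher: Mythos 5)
Your proposal matches the paper's proof, which simply cites Corollary \ref{Breg}, Proposition \ref{BJ2}, and Proposition \ref{BGring} --- i.e., regularity (hence universal catenarity), J-2, and the G-ring property --- to conclude excellence from the standard characterization. Your observation that Hypothesis \ref{exhyp} does not literally package noetherianness and regularity is fair, but the paper makes the same implicit identification with the concrete rings, so there is no substantive divergence.
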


\begin{proof}
This follows from \ref{Breg}, \ref{BJ2}, and \ref{BGring}.
\end{proof}

\begin{corollary}\label{Bexc}
The rings $\ATrn$ and $\Bin$ are excellent. As excellence is stable under passage to finitely generated algebras, this implies that the rings $C$ of Hypothesis \ref{etalehyp} arising from \'etale morphisms are also excellent. 
\end{corollary}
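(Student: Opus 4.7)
My plan is to assemble the corollary from the preceding results by working through the three rings in the natural order $\ATrn \Rightarrow \Bin \Rightarrow C$, and to handle the characteristic zero and characteristic $p$ cases separately for the base ring $\ATrn$.

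For $\ATrn$ in the mixed characteristic (i.e.\ characteristic zero) setting, I would simply cite Theorem \ref{Rexc}: the prerequisite Hypothesis \ref{exhyp} has just been verified in Proposition \ref{WJex} (using Theorem \ref{rnull} for the Nullstellensatz condition and the partial derivatives $\partial/\partial T_i$ together with the weak Jacobian computation for the (WJ) condition on $R\otimes_A \Fra(A)$). For the case where $E$, and hence $\ATrn$, has characteristic $p$, the proof of Theorem \ref{Rexc} does not apply because it used $\Q\subset R$; instead I would invoke Remark \ref{Echar0}, which observes that in this case $\ATrn$ is a ring of convergent power series over the perfect field $L$ with finite $p$-basis $\varpi, T_1, \ldots, T_n$, and hence is excellent by Kunz's theorem \cite[Theorem 2.5]{Kun}.

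For $\Bin$, I would use Remark \ref{nullvals}: by \cite[Lemma 4.9]{NP}, $\Bin$ can be realized as a quotient of some $\ATrn$ (at least for $I$ with appropriate rational endpoints so that the value group hypothesis of Theorem \ref{rnull} holds, and otherwise by rescaling). Since excellence is preserved under passage to quotients (quotients being a special case of finitely generated algebras over the ring), the excellence of $\ATrn$ transfers to $\Bin$. For the \'etale extensions $C$ of Hypothesis \ref{etalehyp}, by definition $C$ is a quotient of $\Bi\{T_1/\rho_1,\dots,T_n/\rho_n\}=\Bin$, which is excellent by the previous step; hence $C$ is itself a finitely generated algebra over an excellent ring, and so is excellent as indicated in the corollary's statement.

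I do not expect a genuine obstacle here, since all the substantive work has been done upstream (the Nullstellensatz in Theorem \ref{rnull}, regularity in Corollary \ref{Breg}, the Jacobian calculation in Proposition \ref{WJex}, and the J-2/G-ring arguments in Propositions \ref{BJ2} and \ref{BGring}, together with Kunz's theorem for characteristic $p$). The only point requiring a small amount of care is bookkeeping: verifying that the passage $\ATrn\twoheadrightarrow \Bin\twoheadrightarrow C$ respects the hypotheses under which each excellence result was established, in particular that the interval $I$ and radii $\rho_i$ can be chosen compatibly with the conditions imposed in Theorem \ref{rnull} and Remark \ref{nullvals}, so that the Nullstellensatz condition propagates to each stage.
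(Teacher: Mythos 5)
Your proposal is correct and matches the argument the paper intends: $\ATrn$ is excellent by Theorem \ref{Rexc} via Proposition \ref{WJex} in characteristic zero and by Remark \ref{Echar0} (Kunz) in characteristic $p$, and $\Bin$ and $C$ inherit excellence as quotients (finitely generated algebras) via Remark \ref{nullvals} and Hypothesis \ref{etalehyp}. Your closing remark about checking that the interval $I$ and the $\rho_i$ satisfy the value-group condition of Theorem \ref{rnull} is exactly the caveat the paper itself imposes, so there is no gap.
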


\begin{corollary}\label{FFstalks}
The stalks of the adic Fargues-Fontaine curve are noetherian.
\end{corollary}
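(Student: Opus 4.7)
The adic Fargues-Fontaine curve is built by glueing finitely many affinoid adic spaces of the form $\Spa(\Bi,\Bic)$ modulo a Frobenius action, so it suffices to prove noetherianness of the stalks of $\mathcal{O}_X$ on each such affinoid chart. Fix a point $v\in\Spa(\Bi,\Bic)$ with support $\ip_v=\ker(v|_{\Bi})\subset\Bi$; by definition
$$\mathcal{O}_{X,v}=\varinjlim_{v\in U}\mathcal{O}_X(U),$$
the filtered colimit over rational neighborhoods of $v$. By \cite[Remark 2.4.7]{KL} and Corollary \ref{Bexc}, each $\mathcal{O}_X(U)$ is excellent and in particular noetherian, but filtered colimits of noetherian rings need not be noetherian, so additional work is required.

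The plan is to identify this stalk with the localization $\Bi_{\ip_v}$ (or, if this turns out to be insufficient, with its henselization $(\Bi_{\ip_v})^h$). One inclusion is immediate: every $f\in\Bi\setminus\ip_v$ satisfies $v(f)\neq 0$ and is therefore a unit in the rational subset $\{w:w(f)\neq 0\}$ containing $v$, giving a ring map $\Bi_{\ip_v}\to\mathcal{O}_{X,v}$. For the reverse direction, the description $\mathcal{O}_X(U)=\Bi\{T_1/\rho_1,\dots,T_n/\rho_n\}/(gT_1-f_1,\dots,gT_n-f_n)$ from Remark \ref{LocGens} together with the relations $T_i=f_i/g$ shows that every element of $\mathcal{O}_X(U)$ is formally a fraction with denominator a power of $g\notin\ip_v$; the remaining task is to promote this formal identification to an honest identification at the level of germs. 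Once this is accomplished, noetherianness of the stalk follows directly: by Corollary \ref{Bexc} the ring $\Bi$ is excellent, and excellent rings have noetherian localizations, as well as noetherian henselizations in case the finer identification is needed.

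The main obstacle is the second direction of the identification: since $\mathcal{O}_X(U)$ is a completion of the polynomial quotient, its elements are a priori convergent power series in the $T_i$ rather than polynomials, and the relations $gT_i=f_i$ alone do not allow one to truncate such series. For rank-one points of types 1--4, the Tate property of $\Bi$ lets one approximate a convergent series arbitrarily well by polynomials, and after passage to the germ at $v$ the higher-order remainder can be absorbed into the multiplicative structure of the localization. For type-5 points, Lemma \ref{2v5} lets one replace $v$ by a nearby rank-one valuation without affecting the rational neighborhoods used, reducing via Proposition \ref{contop} to the rank-one case. The upshot of this case analysis is that every germ at $v$ is represented by a genuine fraction over $\Bi$ with denominator outside $\ip_v$, completing the identification and yielding noetherianness.
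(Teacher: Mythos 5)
Your argument rests on identifying the stalk $\mathcal{O}_{X,v}$ with the localization $\Bi_{\ip_v}$ (or its henselization), and this identification is false; the ``main obstacle'' you flag in your last paragraph is not a technical wrinkle but the point where the strategy breaks down. Already for the closed unit disk $\Spa(\Q_p\langle T\rangle,\Z_p\langle T\rangle)$ at the classical point $T=0$, the stalk is $\varinjlim_{r\to 0^+}\Q_p\langle T/r\rangle$, the ring of power series converging on \emph{some} small disk. A germ of this kind that does not extend to a meromorphic function on the whole unit disk cannot be written as $g/h$ with $g,h\in\Q_p\langle T\rangle$ and $h(0)\neq 0$, so the stalk is strictly larger than the localization; and since a generic such germ is transcendental over the localization, it does not lie in the henselization either. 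Your proposed fix --- approximate a convergent series by polynomials and ``absorb the remainder into the multiplicative structure of the localization'' --- cannot work because the localization is not complete: the remainder is again a convergent series of small norm, not an element of $\Bi_{\ip_v}$. Note also that for points of types $2$--$5$ the support $\ip_v$ is zero, so $\Bi_{\ip_v}=\operatorname{Frac}(\Bi)$, which is visibly not the stalk; and ``excellent rings have noetherian localizations'' is true of any noetherian ring, so excellence would be playing no role in your argument, whereas it is essential to the statement.

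The actual proof (Temkin's argument, as in Conrad's notes) accepts that the stalk is a genuinely transcendental filtered colimit and controls it differently. Write $\mathcal{O}_{X,x}=\varinjlim_i A_i$ over rational neighborhoods, let $\imax_i$ be the image of the maximal ideal of the stalk in $\Spec(A_i)$, and set $B_i=(A_i)_{\imax_i}$; these are noetherian local rings and, by Huber's flatness of the transition maps $A_i\to A_j$, the maps $B_i\to B_j$ are flat and local. An EGA criterion then says the colimit is noetherian provided $\imax_iB_j=\imax_j$ for all large $i\le j$. Flatness gives $\dim(B_j)=\dim(B_i)+\dim(B_j/\imax_iB_j)$, so bounded dimension forces $\dim(B_j/\imax_iB_j)=0$ eventually, and it remains to see that $B_j/\imax_iB_j$ is reduced. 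This last step is where excellence of the extended Robba rings (Corollary \ref{Bexc}) is used: the ring in question is a localization of a fiber algebra of a map of excellent rings. If you want to salvage your approach, you would need to replace ``localization of $\Bi$'' by this colimit-of-localizations framework from the start.
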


\begin{proof}
Temkin proved this for rigid analytic spaces, making essential use of the fact that affinoid algebras are excellent. His proof works just as well for the Fargues-Fontaine curve now that we have proven that the extended Robba rings are excellent. We give a brief sketch of the proof, a more detailed version is given in Proposition \cite[15.1.1]{Con}.

The local ring $\mathcal{O}_x$ at a point $x$ of the Fargues-Fontaine curve can be written the direct limit of a directed system $(A_i)$ of rational domains $\Spa(A_i, A_i^+)$ containing $x$ where the $A_i$ are all extended Robba rings. Let $\imax$ denote the maximal ideal of the local ring $\mathcal{O}_x$, and let $\imax_i\in \Spec(A_i)$ be the image of $\imax$ in the map $\Spec(A)\ra\Spec(A_i)$ coming from the direct limit. Letting $B_i=(A_i)_{\imax_i}$, the directed system of the $B_i$ also has limit $\mathcal{O}_x$.

Huber showed that the transition maps $A_i\ra A_j$ are flat \cite[II.1.iv]{Hub0}. The directed system $(B_i)$ therefore consists of local noetherian rings with flat local transition maps, it is shown in EGA that the limit is noetherian if for sufficiently large $i$, we have $\imax_i B_j=\imax_j$ for all $j\geq i$. As the transition maps are flat, we have $$\dim(B_j)=\dim(B_i)+\dim(B_j/\imax_i B_j)\geq \dim(B_i)$$ for $j\geq i$. As the dimension of the $B_j$ is bounded above, we must have some $i_0$ such that $\dim(B_i)=\dim(B_{i_0})$ for all $i\geq i_0$ and so $\dim(B_i/\imax_{i_0} B_i)=0$ for $i\geq i_0$. 

So it is enough to show that $B_i/\imax_{i_0}B_i$ must be reduced. This is the localization of a fiber algebra of a map of excellent rings, so it is reduced by the argument in \cite{Con}.
\end{proof}

\end{document}